\documentclass[a4paper,10pt,reqno]{amsart}

\usepackage[T1]{fontenc}			
\usepackage[utf8]{inputenc}
\usepackage[italian, english]{babel}
\usepackage{amsmath, amsfonts, amssymb, amsthm, amscd, mathtools}
\usepackage{geometry}
\usepackage[square,numbers]{natbib}	
\usepackage{bbm, bm}
\usepackage{xspace} 
\usepackage{enumerate, enumitem}
\usepackage{hyperref}
\usepackage{xcolor}
\usepackage{comment}

\usepackage{thmtools}

\newcommand{\bbF}{{\ensuremath{\mathbb F}} }

\newcommand{\cB}{{\ensuremath{\mathcal B}} }
\newcommand{\cC}{{\ensuremath{\mathcal C}} }

\newcommand{\cF}{{\ensuremath{\mathcal F}} }

\newcommand{\cL}{{\ensuremath{\mathcal L}} }

\newcommand{\cO}{{\ensuremath{\mathcal O}} }
\newcommand{\cP}{{\ensuremath{\mathcal P}} }

\newcommand{\cS}{{\ensuremath{\mathcal S}} }

\newcommand{\bfJ}{{\ensuremath{\mathbf J}} }

\newcommand{\dC}{{\ensuremath{\mathrm C}} }

\newcommand{\R}{\mathbb{R}}

\renewcommand{\P}{\mathbb{P}}
\newcommand{\E}{\mathbb{E}}

\newcommand{\ind}{\ensuremath{\mathbf{1}}}

\DeclarePairedDelimiterX{\inprod}[2]{\langle}{\rangle}{#1, #2}

\newcommand{\changeepsilon}
{
\let\temp\epsilon
\let\epsilon\varepsilon
\let\varepsilon\temp
}

\newcommand{\changetheta}
{
\let\temp\theta
\let\theta\vartheta
\let\vartheta\temp
}
\newcommand{\changephi}
{
\let\temp\phi
\let\phi\varphi
\let\varphi\temp
}

\declaretheorem[name=Theorem]{theorem}

\theoremstyle{plain}
\newtheorem*{theorem*}{Theorem}
\newtheorem{lemma}[theorem]{Lemma}
\newtheorem*{lemma*}{Lemma}
\newtheorem{proposition}[theorem]{Proposition}
\newtheorem*{proposition*}{Proposition}

\theoremstyle{definition}
\newtheorem{definition}{Definition}[section]
\newtheorem{assumption}{Assumption}[section]

\theoremstyle{remark}
\newtheorem{remark}{Remark}[section]
\newtheorem*{remark*}{Remark}

\definecolor{darkviolet}{rgb}{0.58, 0.0, 0.83}

\definecolor{gre}{rgb}{0.03,0.50,0.03}

\numberwithin{equation}{section}

\hypersetup{pdftitle={stationary_MFC_OU},pdfauthor={Federico Cannerozzi},hidelinks,%
pdfstartview={XYZ null null 1.4},colorlinks=true,linkcolor=blue,urlcolor=blue}

\newcommand{\mail}[1]{\href{mailto:#1}{\normalfont\texttt{#1}}}

\makeatletter
\def\@setthanks{\vspace{-\baselineskip}\def\thanks##1{\@par##1\@addpunct.}\thankses}
\makeatother

\title[Stationary Mean-Field Control of an Ornstein-Uhlenbeck process]{Stationary Mean-Field singular control of an Ornstein-Uhlenbeck process}
\author[Federico Cannerozzi]{Federico~Cannerozzi\textsuperscript{\MakeLowercase{a},1}}
\thanks{\noindent \textsuperscript{a} Bielefeld University, Center for Mathematical Economics (IMW), Bielefeld (Germany).}
\thanks{ \textsuperscript{1} E-mail: \mail{federico.cannerozzi@uni-bielefeld.de}. }

\date{\today}

\begin{document}

\changephi
\changeepsilon
\allowdisplaybreaks

\begin{abstract}
Motivated by continuous-time optimal inventory management, we study a class of stationary mean-field control problems with singular controls. The dynamics are modeled by a mean-reverting Ornstein-Uhlenbeck process, and the performance criterion is given by a quadratic long-time average expected cost functional. The mean-field dependence is through the stationary mean of the controlled process itself, which enters the ergodic cost functional. We characterize the solution to the stationary mean-field control problem in terms of the equilibria of an associated stationary mean-field game, showing that solutions of the control problem are in bijection with the equilibria of this mean-field game. Finally, we solve the stationary mean-field game explicitly, thereby providing a solution to the original stationary mean-field control problem.
\end{abstract}

\maketitle

\noindent \textbf{Keywords:} singular stochastic control; mean-field control problems; mean-field games; ergodic reward; inventory models.

\smallskip

\noindent \textbf{AMS 2020:} 49N80, 91A15, 91A16, 90B05.

\smallskip

\section{Introduction}\label{sec:intro}

\subsection*{Problem and motivation}
In this paper, we investigate a stationary singular mean-field control (MFC) problem, motivated by continuous-time optimal inventory management.
The state process of the agent is given by a singularly controlled mean-reverting Ornstein-Uhlenbeck process, where the control is a process of bounded variation that enters linearly the dynamics.
The control problem is two-sided, meaning that the control can be exercised both upward and downward.
The agent seeks to minimize a long-term cost functional, given by the long-time average of an expected cost.
The instantaneous cost, of quadratic type, depends on the state of the agent and on the stationary mean of the controlled process itself, casting the problem in the framework of mean-field control problems.
In the following, we refer to this optimization problem as stationary MFC problem. 

\smallskip
The stationary MFC problem can be motivated as a continuous-time optimal inventory management problem.
Brownian models for the inventory process are nowadays classical; see, e.g., the review \cite{perera2023inventory_models} and the references therein.
In our framework, the state process can be regarded as the net inventory process of a firm, which captures the difference between regular and customer demands.
The net inventory process exhibits mean-reverting behavior, in line with literature (see, e.g., \cite{cadenillas2010optimal,hu_liu2018mean_reverting,liu_bensoussan2019ergodic_inventory}).
In this context, mean-reversion can be thought of as an effect of deterioration (see \cite{goyal2001recent,raafat1991survey}).
The firm controls its inventory level by a process of bounded variation, where the positive and negative parts of each singular control represent the total amount of increase/decrease of inventory up to time $t$.
The cost is of long-time average (ergodic) type, following a long history in the inventory management literature (among others, see \cite{calvia2025optimal,dai2013brownian_inventory,helmes2017continuous,helmes2018weak_inventory,helmes2025inventory,liu_bensoussan2019ergodic_inventory,helmes2025long,taksar1985average,yao2015optimal_brownian_inventory}).
Controls are exercised on the net inventory process to maintain the inventory at desired position, to minimize the long-term variance and to reach a target on the long-term average of the inventory process itself.
The stationary one-dimensional setting of the MFC problem, together with its linear-quadratic structure, allows for explicit characterizations of the optima and for comparative statics.
To the best of our knowledge, this is the first work that includes a mean-field control feature in an inventory problem, through terms depending on the stationary mean of the inventory process itself.

\subsection*{Our contribution}
We solve the stationary MFC problem by establishing a new connection with stationary mean-field games (MFGs).
We introduce a new stationary MFG, which we refer to as the potential stationary MFG, and we prove that solutions to the stationary MFC problem are in a one-to-one correspondence with equilibria of the potential stationary MFG.
The potential stationary MFG shares the same dynamics as the stationary MFC problem but features a modified cost functional that depends on two scalar interaction terms.
At equilibrium, we require that the first interaction term coincides with the stationary mean of the optimally controlled process, while the second interaction term matches the integral of the derivative of the instantaneous cost with respect to the interaction parameter, taken with respect to the stationary distribution of the optimally controlled process.
To the best of our knowledge, this paper is the first that completely characterizes solutions to the stationary MFC problem in terms of equilibria of a potential stationary MFG.

The introduction of this second interaction term, together with its associated consistency condition, is closely tied to the way we show that any solution to the stationary MFC problem induces a solution to the potential stationary MFG.
Specifically, we relate the stationary MFC problem to a two-step optimization procedure: one first optimizes over admissible controls satisfying a constraint on the stationary mean, and then optimizes over the mean-field parameter.
This two-step optimization procedure, already present in literature in \cite{cannerozzi2024cooperation,christensen2021competition,helmes2025long}, is used here in a new way to obtain a necessary condition for optimality for the MFC problem in terms of solutions to the potential stationary MFG.

We first solve the constrained optimization problem using a Lagrange multiplier approach. This allows us to solve an associated unconstrained optimization problem and then tune the Lagrange multiplier so that the resulting optimal control satisfies the constraint on the stationary mean.
This procedure turns out to be equivalent to solving a family of stationary MFGs with a consistency condition on the stationary mean, and then showing that one can choose the second interaction parameter so that the equilibrium satisfies the constraint on the stationary mean.

The unconstrained optimization problem is solved through a suitable connection with Dynkin games, a connection already observed in the context of ergodic control problems (see \cite{calvia2025optimal,karatzas1983class,taksar1985average}) and used in the study of one-dimensional stationary MFGs of singular controls with ergodic objective (see \cite{cannerozzi2024cooperation,cao2023stationary,dianetti2023ergodic}).
This connection, in particular, enables us to show that the optimal control is of barrier type: it prescribes the controlled process to be kept between two deterministic thresholds, which we prove to be Lipschitz functions of the interaction parameters, with explicit Lipschitzianity constants.

Having solved the constrained optimization problem, we find that the Lagrange multiplier associated with the stationary mean of the optimal control process must satisfy the second consistency condition. This clarifies the role of the additional interaction term in the potential stationary MFG: it should be interpreted as the Lagrange multiplier corresponding to the optimal stationary mean in the second step of the optimization procedure.
Thus, while the first consistency condition can be viewed as an admissibility requirement, the second consistency condition should be interpreted as an optimality condition on the stationary mean.

To prove the sufficient condition, we first show that any solution to the ergodic optimization problem must satisfy suitable first-order optimality conditions.
We then use the first-order optimality conditions to show that any solution to the potential stationary MFG yields a solution to the stationary MFC problem. The consistency condition on the second interaction term plays a crucial role in relating the modified cost functional of the potential MFG to the cost functional of the original stationary MFC problem.
We emphasize that this sufficient condition acts as a verification theorem, providing a powerful tool for solving the stationary MFC problem. Indeed, it implies that, to solve the stationary MFC problem, one may freeze the interaction term, solve an ergodic singular control problem, and then impose a fixed-point condition.
Given the delicate nature of the mean-field dependence in the cost functional, through the stationary mean of the controlled process itself, this procedure proves particularly effective.

Finally, we show that the potential stationary MFG admits a solution, yielding a concrete optimal policy for the stationary MFC problem. In particular, this allows us to prove that the optimal policy is of reflection type, keeping the controlled process with minimal effort between two constants.

We conclude the paper with numerical illustrations of our findings, highlighting how the solutions of the stationary MFC problem depend on key parameters such as the speed of mean reversion, volatility, and interaction strength.

\subsection*{Related literature}
This paper contributes to various streams of literature.
First, it contributes to the literature on mean-field control problems with ergodic reward. Contributions in this area are relatively scarce; see \cite{albeverio2022sicon,bao_tang2023ergodic_ctrl_mkv,bayraktar2024discrete,cannerozzi2024cooperation,fuhrman2025ergodicMKV}. In particular, in \cite{albeverio2022sicon} existence and uniqueness results are obtained for a class of ergodic MFC problems; moreover, the authors introduce an $N$-agent Markovian control problem for which the optimal control can be explicitly characterized, and show that its payoff converges to the value of the ergodic MFC problem.
\cite{bayraktar2024discrete} studies an ergodic MFC problem in a discrete-time framework, while \cite{bao_tang2023ergodic_ctrl_mkv,fuhrman2025ergodicMKV} establish existence and uniqueness of an optimal control by analyzing the Hamilton-Jacobi-Bellman equation, either on the Hilbert space $L^2$ of square-integrable random variables or on the Wasserstein space of square-integrable probability measures.
\cite{cannerozzi2024cooperation} studies a one-sided stationary MFC problem with an explicit geometric structure, which serves as a benchmark model for Pareto-efficient strategies in a game with strategic complementarity.
We also refer to \cite{bayraktar2025ergodicity,sun2024ergodic_tunrpike}, where the so-called turnpike property is investigated.
Finally, particularly related to our work are \cite{christensen2021competition,helmes2025long}, which employ a Lagrange multiplier approach to compute MFC solutions in the context of impulse control problems with stationary dependence on the interaction term.

Secondly, we contribute to the literature on stationary MFGs of singular controls.
Stationary MFGs have a long history: contained already in the seminal paper \cite{lasry_lions}, they have been addressed in various forms. We refer to \cite{araposthatis2017mfgs_ergodic_cost,cirant2016stationary,dragoni2018ergodic_mfgs} for existence results, to  \cite{bardi2014sicon,feleqi2013derivation_ergodic_mfgs} for forward-convergence results, and to \cite{bardi2024SIMAN,cardaliaguet2012long_time_averages,cecchin2024turnpike} for the long-time convergence of finite-horizon MFGs to stationary MFGs.
Referring in particular to MFGs of singular controls, they have been addressed in a variety of papers:
\cite{cohen2024existence,denkert2024extended,dianetti2023unifying,fu2023extended,fu2017mean} consider abstract existence results, while explicit characterizations of the MFG equilibria can be found in \cite{aid2025stationary,campi2022mean,cao2023stationary,cao2022mfgs, dianetti2023ergodic,guo2019stochastic}.

Thirdly, and perhaps most importantly, we contribute to the literature on potential MFGs and their connection with MFC problems. It was already observed in \cite[Section 2.6]{lasry_lions} that, for the class of potential MFGs, one can introduce an optimization problem whose solutions coincide with the solutions of the MFG.
This connection has been exploited in a variety of works to solve MFGs via auxiliary optimization problems; see, for example, \cite{briani_cardaliaguet2018potential,cardaliaguet2015weak,cardaliaguet2015firstorder,cardaliaguet2015secondorder,graber2014AMOP,graber2021potential,orrieri2019variational} for the analytic approach to MFGs and \cite[Chapter 6.7.2]{librone_vol_I}, \cite{carmona2025conditional,graber2016LQ_exhaustible_resource} for the probabilistic approach.
Our approach differs from the aforementioned works and, to the best of our knowledge, is not present in the literature, as our goal is both to characterize solutions to the stationary MFC problem through a potential stationary MFG and to solve the stationary MFC problem via the potential stationary MFG.
Specifically, we first derive a necessary condition for the stationary MFC problem in terms of the potential stationary MFG, then show that it is also sufficient and finally use this characterization to solve the original MFC problem.
In this sense, the connection with the potential MFG plays the role of both a first-order optimality condition, analogous to \cite{hofer2024optimal}, and a sufficient condition.
Moreover, our approach prescribes modifying the original cost functional so that solutions to the potential stationary MFG yield solutions to the stationary MFC problem. Related ideas appear in \cite{alasseur2024mfg}, where a modified MFG is used to solve an MFC problem, in \cite{graber2016LQ_exhaustible_resource}, where assumptions under which MFG and MFC solutions coincide are discussed in a linear-quadratic framework, and in \cite{carmona2023nash}, where it is shown that MFGs with suitably modified cost functionals can achieve the same payoff as the original MFC problem.
We stress that, in our setting, the connection with MFGs provides a solution to a MFC problem that would otherwise remain unsolved.

\subsection*{Organization of the paper}
The rest of the paper is organized as follows. Section \ref{sec:ctrl_problem} introduces the problem and states the main results. The necessary condition for optimality for the stationary MFC problem is established in Section \ref{sec:necessary}, while Section \ref{sec:sufficient} addresses both the sufficient condition and the existence of an equilibrium for the potential stationary MFG. Finally, Section \ref{sec:numerics} presents some numerical illustrations of our findings.

\section{Model and Main Results}\label{sec:ctrl_problem}
Let $(\Omega,\cF,\bbF = (\cF_t)_{t \geq 0},\P)$ be a complete filtered probability space capturing all the uncertainty of our setting.
Consider a real-valued $\bbF$-Brownian motion $W$ and let $\delta, \, \sigma \in \R$, $\delta > 0$.

\smallskip
Let $\cB$ be the set of $\bbF$-adapted processes of bounded variation with càdlàg paths which start from $0$ at time $0^{-}$.
We identify each process $\xi \in \cB$ with its positive and negative parts given by the Jordan decomposition, i.e. $\xi_t = \xi^+_t - \xi^-_t$, with $t \mapsto \xi^\pm_t$ $\bbF$-adapted, right-continuous, non-decreasing and so that $\xi^\pm_{0^{-}}=0$ $\P$-a.s.
Moreover, we assume that $\E[\vert \xi \vert _T] < \infty$ for any $T > 0$.
We call any process $\xi \in \cB$ a singular control.

\smallskip
For any $\xi=(\xi^+,\xi^-) \in \cB$, the dynamics is given by
\begin{equation}\label{application:dynamics}
    d X^{\xi}_t =  - \delta X^{\xi}_tdt + \sigma dW_t + d\xi^+_t - d\xi^-_t, \quad X^{\xi}_{0^{-}}=x.
\end{equation}

Let $\cP(\R)$ be the set of probability measures over $\R$.
In order to define the stationary mean-field control problem, we restrict to those controls so that the process $X^{\xi}$ is ergodic, in the sense of the following definition:
\begin{definition}\label{def:ergodic_strategies}
We say that a strategy $\xi \in \cB$ is admissible for the stationary mean-field control problem if the process $(X^{\xi}_t)_{t \geq 0^{-}}$, with $X^{\xi}$ solution to \eqref{application:dynamics}, admits a unique stationary distribution $p^{\xi}_\infty \in \cP(\R)$ with $\int_{\R} \vert x \vert p^{\xi}_\infty(dx) < \infty$.
We denote the set of admissible strategies for the stationary mean-field control problem by $\cB_{mfc}$, and we denote by $\theta^{\xi}$ the first moment of $p^{\xi}_\infty$, i.e. $\theta^{\xi} = \int_{\R} x \, p^{\xi}_\infty(dx)$.
\end{definition}

For $\xi \in \cB_{mfc}$, define
\begin{equation}\label{eq:cost:ergodic}
    \bfJ^{mfc}(\xi) \coloneqq \varlimsup_{T \uparrow \infty}\frac{1}{T}\E\left[\int_0^T \frac{1}{2}\left( \rho(X^{\xi}_t - \bar{x})^2 + \phi(X^{\xi}_t - \theta^{\xi})^2 + \psi(\theta^\xi - \bar{\theta})^2 \right)dt +K_+\xi^+_T +K_-\xi^-_T \right],
\end{equation}
We assume $K_+$, $K_-$, $\rho$ and $\phi$ strictly positive, $\psi \geq 0$ and $\bar{x}, \, \bar{\theta} \in \R$.
Note that, for $\xi \in \cB_{mfc}$, the quantity $\theta^\xi$ is well defined and deterministic, so that the cost functional \eqref{eq:cost:ergodic} is well posed.

The stationary mean-field control problem is then to determine a policy $\xi^{\star} \in \cB_{mfc}$ so that 
\begin{equation}\label{eq:application:value}
    \bfJ^{mfc}(\xi^{\star}) = \inf_{\xi \in \cB_{mfc} } \bfJ^{mfc}(\xi).
\end{equation}

\begin{remark}
The stationary MFC problem can be seen as the stationary version of the following minimization problem:
Find a policy $\xi^\star \in \cB$ which minimizes the cost functional
\begin{equation}\label{eq:cost:finite_moments}
    \varlimsup_{T \uparrow \infty}\frac{1}{T}\E\left[\int_0^T \frac{1}{2}\Big(\rho(X^{\xi}_t - \bar{x})^2 + \phi(X^{\xi}_t - \E[X^{\xi}_t])^2 + \psi(\E[X^{\xi}_t] - \bar{\theta})^2 \Big)dt +K_+\xi^+_T +K_-\xi^-_T \right].
\end{equation}
Due to the ergodic cost criterion, we consider the stationary version of the cost functional \eqref{eq:cost:finite_moments} given by \eqref{eq:cost:ergodic}, in which we replace the dependence on the average at time $t$ with the long-time average, i.e. we consider $\theta^{\xi} = \lim_{t \to \infty} \E[X^{\xi}_t]$ instead of $\E[X^{\xi}_t]$ inside the instantaneous cost, for any time $t \geq 0$.
\end{remark}

\smallskip
In order to solve the stationary MFC problem, we define an auxiliary stationary mean-field game problem, which we refer to as the potential stationary MFG, whose interaction term is given by two scalar parameters $(\theta,\lambda) \in \R^2$.
We relate the solutions to the stationary MFC and the stationary potential MFG, by showing that we have a solution to the stationary MFC problem if and only if we have a solution to the stationary potential MFG.

\smallskip
For a control $\xi \in \cB$ and scalars $\theta, \, \lambda \in \R$, we consider the following cost functional, to be minimized:
\begin{equation}\label{eq:mfg:payoff}
    \bfJ(\xi,\theta,\lambda) \coloneqq \varlimsup_{T \uparrow \infty}\frac{1}{T}\E\left[ \int_0^T \big( l(X^{\xi}_t,\theta) +\lambda (\theta - X^{\xi}_t) \big)dt +K_+\xi^+_T +K_-\xi^-_T  \right],
\end{equation}
where we have set
\begin{equation}\label{eq:instant_cost}
    l(x,\theta) \coloneqq \frac{1}{2}\rho (x - \bar{x})^2 + \frac{1}{2}\phi (x - \theta)^2 + \frac{1}{2}\psi (\theta - \bar{\theta})^2.
\end{equation}

\begin{definition}\label{def:mfg:solution}
We say that a triple $(\xi^{\star},\theta^\star,\lambda^\star) \in \cB \times \R^2 $ is a solution to the potential stationary MFG, with dynamics \eqref{application:dynamics} and cost functional \eqref{eq:mfg:payoff}, if the following holds:
\begin{enumerate}[label=\roman*)]
    \item $\bfJ(\xi^{\star},\theta^\star,\lambda^\star) \leq \bfJ(\xi,\theta^\star,\lambda^\star)$ for any $\xi \in \cB$;
    \item  The optimally controlled state process $X^{\xi^{\star}}$ admits a unique stationary distribution $p^{\star}_\infty \in \cP(\R)$ and
    \begin{align}
        \theta^\star & = \int_{\R} x \, p^{\star}_\infty(dx), \label{mfg:cons:theta}\\
        \lambda^\star & = -\int_{\R} l_\theta(x,\theta^\star) \, p^{\star}_\infty (dx). \label{mfg:cons:lambda}
    \end{align}
\end{enumerate}
\end{definition}

We will need the following assumptions on the parameters:
\begin{assumption}\label{ass:phi_rho}
$\phi < \rho$.
\end{assumption}
Assumption \ref{ass:phi_rho} is needed in the proofs of both the necessary and sufficient condition for the optimality, in order to ensure the existence of equilibria for the stationary MFGs that we will consider in order to solve the stationary MFC problem (see Proposition \ref{prop:mfg:fixed_multiplier} and Theorem \ref{thm:mfg:existence}).

\smallskip
Our first result, which is proved in Section \ref{sec:necessary}, is a necessary condition for the optimality of a control $\xi^\star \in \cB_{mfc}$ in terms of solutions to the potential stationary MFG:
\begin{theorem}\label{thm:necessary_condition}
Let Assumption \ref{ass:phi_rho} hold true.
Suppose that $\xi^\star \in \cB_{mfc}$ is optimal for the stationary MFC problem \eqref{eq:cost:ergodic}, and let $p^\star_\infty \in \cP(\R)$ denote its stationary distribution.
Set $\theta^\star= \int_{\R} x \, p^\star_\infty(dx)$ and $\lambda^\star = -\int_{\R} l_\theta(x,\theta^\star) \, p^\star_\infty(dx)$.
Then, $(\xi^\star,\theta^\star,\lambda^\star)$ is a solution to the potential stationary MFG.
\end{theorem}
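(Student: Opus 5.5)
Following the strategy outlined in the introduction, I plan to reduce the stationary MFC problem to a two-step optimization. For $\theta\in\R$ let $\cB_{mfc}(\theta)=\{\xi\in\cB_{mfc}:\theta^\xi=\theta\}$ and $V(\theta)=\inf_{\xi\in\cB_{mfc}(\theta)}\bfJ^{mfc}(\xi)$. Then $\inf_{\cB_{mfc}}\bfJ^{mfc}=\inf_\theta V(\theta)$. By optimality of $\xi^\star$, $\xi^\star$ attains $V(\theta^\star)$ and $\theta^\star$ minimizes $V$.

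\textbf{Step 1: inner problem via a Lagrange multiplier.} On $\cB_{mfc}(\theta)$ the ergodic theorem gives $\frac1T\E\int_0^T(\theta-X^\xi_t)\,dt\to0$. Hence, for every $\lambda$,
\[
\bfJ^{mfc}(\xi)=\bfJ(\xi,\theta,\lambda),\qquad \xi\in\cB_{mfc}(\theta).
\]
This step needs some care to justify interchanging the limsup with the Cesàro averages, using only the uniqueness of the stationary law and first-moment integrability. The fact that $\phi(x-\theta^\xi)^2$ in \eqref{eq:cost:ergodic} equals $l$ evaluated at $\theta=\theta^\xi$ is immediate. I will therefore consider the unconstrained ergodic problem $\inf_{\xi\in\cB}\bfJ(\xi,\theta,\lambda)$ with $\theta,\lambda$ frozen. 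This problem has an effective linear-quadratic running cost in $x$, namely $\frac12(\rho+\phi)x^2$ plus a linear term in $x$. Through the Dynkin game connection, its optimal control is a reflection at two barriers $a(\theta,\lambda)<b(\theta,\lambda)$, which are Lipschitz in $(\theta,\lambda)$ and strictly decreasing in $\lambda$, since $\lambda$ acts as a linear reward for high states. The reflected OU process has an explicit stationary law on $[a,b]$, with mean $m(\theta,\lambda)$. For fixed $\theta$ I will then choose $\lambda=\Lambda(\theta)$ with $m(\theta,\Lambda(\theta))=\theta$. This is where a fixed-multiplier MFG result (Proposition \ref{prop:mfg:fixed_multiplier}) and Assumption \ref{ass:phi_rho} enter. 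The difficulty is showing that $\lambda\mapsto m(\theta,\lambda)$ is continuous and surjective onto $\R$, and that it is monotone, so that the multiplier is unique. With this choice, the reflected control $\hat\xi^\theta\in\cB_{mfc}(\theta)$ minimizes $\bfJ(\cdot,\theta,\Lambda(\theta))$ over all of $\cB\supset\cB_{mfc}(\theta)$, so it attains $V(\theta)$. Moreover $V(\theta)=\min_{\xi\in\cB}\bfJ(\xi,\theta,\Lambda(\theta))$.

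\textbf{Step 2: optimizing over $\theta$.} Next I will identify $\Lambda(\theta^\star)$ with $\lambda^\star=-\int l_\theta(x,\theta^\star)p^\star_\infty(dx)$. I will use an envelope/first-order argument for $V(\theta)=\min_\xi\bfJ(\xi,\theta,\Lambda(\theta))$. Since $\bfJ(\xi,\theta,\lambda)$ evaluated at the stationary regime is affine in $\lambda$ with coefficient $\theta-\theta^\xi$, which vanishes at the optimizer, a Danskin-type argument gives
\[
V'(\theta)=\int l_\theta(x,\theta)\,\hat p^\theta_\infty(dx)+\Lambda(\theta).
\]
To make this rigorous without differentiability of the optimizer, I will use one-sided difference quotients. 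For $\theta'$ near $\theta^\star$, the inequality $V(\theta')\le\bfJ(\hat\xi^{\theta^\star},\theta',\Lambda(\theta'))$ bounds $V(\theta')-V(\theta^\star)$ by explicit quantities. Here $l$ is quadratic in $\theta$, and the Lipschitz continuity of $\Lambda$ is inherited from that of the barriers. Sandwiching the difference quotients from both sides and using that $\theta^\star$ minimizes $V$ yields $V'(\theta^\star)=0$, that is, $\Lambda(\theta^\star)=\lambda^\star$ with $p^\star_\infty$ replaced by $\hat p^{\theta^\star}_\infty$.

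\textbf{Step 3: identifying $\xi^\star$.} It remains to check that $\xi^\star$ itself, and not only $\hat\xi^{\theta^\star}$, is optimal for $\bfJ(\cdot,\theta^\star,\lambda^\star)$ over $\cB$, and that $p^\star_\infty=\hat p^{\theta^\star}_\infty$. Since $\xi^\star\in\cB_{mfc}(\theta^\star)$ attains $V(\theta^\star)$, we get
\[
\bfJ(\xi^\star,\theta^\star,\Lambda(\theta^\star))=\bfJ^{mfc}(\xi^\star)=V(\theta^\star)=\min_{\cB}\bfJ(\cdot,\theta^\star,\Lambda(\theta^\star)),
\]
so condition i) holds with $\lambda=\Lambda(\theta^\star)$. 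To show $\Lambda(\theta^\star)=\lambda^\star$ as defined from $p^\star_\infty$, I expect to need uniqueness of the stationary law among optimizers. I would obtain it from strict convexity in $x$ of the running cost, which makes $\xi\mapsto\bfJ$ strictly convex along stationary distributions. Failing that, I would repeat the envelope argument of Step 2 directly with $\xi^\star$ as the comparison control, since that argument only uses that the comparison control attains $V(\theta^\star)$. The consistency condition \eqref{mfg:cons:theta} holds by definition. I expect the main obstacle to be the rigorous envelope step, i.e. the differentiability of $V$ at $\theta^\star$, together with the solvability of the multiplier equation $m(\theta,\lambda)=\theta$.
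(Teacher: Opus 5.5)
Your proposal is correct. Step 1 is the paper's Lagrangian reduction: your $\Lambda(\theta)$ is the $\lambda(\theta)$ of Proposition \ref{prop:mfg:fixed_mean}. There is one sign slip. The barriers are non-decreasing in $\lambda$, as your own ``reward for high states'' heuristic predicts, not decreasing; the paper shows $c\le\partial_\lambda a_\pm\le 2c$. That quantitative lower bound, not mere monotonicity, is also what gives surjectivity of $\lambda\mapsto m(\theta,\lambda)$.

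Step 2 is where you genuinely differ from the paper. The paper proves that $\lambda(\theta)$ is locally Lipschitz via Clarke's nonsmooth implicit function theorem (Lemma \ref{lemma:lambda_differentiable}). It then differentiates $\bfJ(\xi,\cdot,\cdot)$ by a Moore--Osgood argument, applies the Milgrom--Segal envelope theorem at a.e.\ $\theta$, and upgrades to differentiability of $V$ everywhere. Your comparison argument bypasses all of this. For $g(\theta'):=\bfJ(\hat\xi^{\theta^\star},\theta',\Lambda(\theta'))$ you have $g\ge V\ge V(\theta^\star)=g(\theta^\star)$, so $\theta^\star$ also minimizes $g$. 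Moreover $g(\theta')=\int l(x,\theta')\,\hat p^{\theta^\star}_\infty(dx)+\Lambda(\theta')(\theta'-\theta^\star)+\mathrm{const}$, so $g$ has derivative $\int l_\theta(x,\theta^\star)\,\hat p^{\theta^\star}_\infty(dx)+\Lambda(\theta^\star)$ at $\theta^\star$ as soon as $\Lambda$ is merely continuous there. Continuity follows directly from joint continuity of the mean map and its strict monotonicity in $\lambda$. So you never need the Lipschitz property, whose ``inheritance from the barriers'' is not immediate: it is exactly what the paper uses Clarke's theorem for. The paper's route gives a formula for $V'$ at every $\theta$. Yours gives only the first-order condition at the minimizer, which is all the theorem requires, with much less machinery.

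In Step 3, drop the strict-convexity idea; it is unnecessary and would be delicate with $\varlimsup$-type ergodic costs. Since $l_\theta(x,\theta)=\phi(\theta-x)+\psi(\theta-\bar\theta)$ is affine in $x$, $\int l_\theta(x,\theta^\star)\,p(dx)=\psi(\theta^\star-\bar\theta)$ for every $p$ with mean $\theta^\star$. Hence $p^\star_\infty$ and $\hat p^{\theta^\star}_\infty$ give the same value, $\lambda^\star=\psi(\bar\theta-\theta^\star)$; the paper uses this identification tacitly. Your fallback, using $\xi^\star$ as the comparison control, also works. It needs only Cesàro convergence of first moments, because $l(x,\theta')-l(x,\theta^\star)$ is affine in $x$. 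Finally, your explicit check that $\xi^\star$ itself, and not just the reflected control, satisfies condition i) fills in a step the paper leaves implicit.
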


The following Theorem, whose proof is given in Section \ref{sec:sufficient}, ensures that any solution to the potential stationary MFG induces a solution to the stationary MFC problem \eqref{eq:cost:ergodic}, provided that it satisfies extra-regularity conditions and we restrict the integrability requirements on the class of admissible controls:
\begin{theorem}\label{thm:sufficient_condition}
Let $(\xi^{\star},\theta^\star,\lambda^\star)$ be a solution to potential stationary MFG, such that
\begin{equation}\label{eq:sufficient_condition:assumption_limit}
    \bfJ(\xi^\star,\theta^\star,\lambda^\star) = \lim_{T \to \infty} \frac{1}{T} \E\left[ \int_0^T \big( l(X^{\xi^\star}_t,\theta^\star) +\lambda^\star (\theta^\star - X^{\xi^\star}_t) \big)dt +K_+\xi^{\star,+}_T +K_-\xi^{\star,-}_T  \right]
\end{equation}
and 
\begin{equation}\label{eq:integrability_control}
    \sup_{T > 0} \frac{1}{T} \E\left[ \int_0^T \vert X^{\xi^\star}_t \vert^2 dt \right] < \infty.
\end{equation}
Then, $\bfJ^{mfc}(\xi^{\star}) \leq \bfJ^{mfc}(\xi)$ for any control $\xi \in \cB_{mfc}$ which satisfies the integrability condition \eqref{eq:integrability_control} as well.
\end{theorem}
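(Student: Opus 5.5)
The plan is to compare the MFC cost of an arbitrary admissible $\xi$ with the MFG cost frozen at $(\theta^\star,\lambda^\star)$. The bridge is convexity of $l$ in $\theta$, combined with the fact that $l(x,\cdot)$ is quadratic with an explicit derivative. Note that
$$l_\theta(x,\theta)=\phi(\theta-x)+\psi(\theta-\bar\theta),$$
so $\lambda^\star=-\int l_\theta(x,\theta^\star)\,p^\star_\infty(dx)=-\psi(\theta^\star-\bar\theta)$, using \eqref{mfg:cons:theta}.

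\textbf{Step 1: the MFC cost of the candidate.} For the candidate we have
$$\bfJ^{mfc}(\xi^\star)=\bfJ(\xi^\star,\theta^\star,\lambda^\star)-\lambda^\star\lim_T\tfrac1T\E\Big[\int_0^T(\theta^\star-X^{\xi^\star}_t)\,dt\Big].$$
Hence I must show that $\tfrac1T\E\int_0^T X^{\xi^\star}_t\,dt\to\theta^\star$. I would use the uniform second-moment bound \eqref{eq:integrability_control}, which gives uniform integrability of the time-averaged laws $\mu_T:=\tfrac1T\int_0^T \mathrm{Law}(X^{\xi^\star}_t)\,dt$. These laws are tight, and any limit point is invariant, by a Krylov–Bogoliubov-type argument. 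By uniqueness of the stationary distribution, $\mu_T\to p^\star_\infty$, and the uniform second moment then transfers convergence of the first moments. Since \eqref{eq:sufficient_condition:assumption_limit} makes the limsup a genuine limit, the limsup in $\bfJ^{mfc}(\xi^\star)$ splits correctly and $\bfJ^{mfc}(\xi^\star)=\bfJ(\xi^\star,\theta^\star,\lambda^\star)$.

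\textbf{Step 2: lower bound for a competitor.} Take $\xi\in\cB_{mfc}$ satisfying \eqref{eq:integrability_control}. By the same argument, $\tfrac1T\E\int_0^T X^\xi_t\,dt\to\theta^\xi$. Convexity of $\theta\mapsto l(x,\theta)$ gives, for every $x$,
$$l(x,\theta^\xi)\ge l(x,\theta^\star)+l_\theta(x,\theta^\star)(\theta^\xi-\theta^\star).$$
Integrate over $[0,T]$ against the law of $X^\xi$ and divide by $T$. The linear term equals $(\theta^\xi-\theta^\star)\big[\phi(\theta^\star-m_T)+\psi(\theta^\star-\bar\theta)\big]$ with $m_T:=\tfrac1T\E\int_0^TX^\xi_t\,dt\to\theta^\xi$. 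In the limit it becomes
$$(\theta^\xi-\theta^\star)\big[\phi(\theta^\star-\theta^\xi)-\lambda^\star\big].$$
The MFG penalty term contributes $\lambda^\star(\theta^\star-m_T)\to\lambda^\star(\theta^\star-\theta^\xi)$, which matches the $-\lambda^\star(\theta^\xi-\theta^\star)$ part exactly. The remaining piece $-\phi(\theta^\xi-\theta^\star)^2\le0$ is a problem, since it has the wrong sign. The fix is to exploit more structure: $\int\phi(x-\theta^\xi)^2$ and $\int\phi(x-\theta^\star)^2$ differ exactly by $\phi(m-\theta^\star)^2-\phi(m-\theta^\xi)^2$ in the time-averaged sense. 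With $m=\theta^\xi$ this equals $+\phi(\theta^\xi-\theta^\star)^2$ in the MFG cost relative to the MFC cost. The correct inequality is therefore an identity for the $\phi$ part, and convexity is needed only for the $\psi$ part. Concretely,
$$\tfrac1T\E\int_0^T l(X^\xi_t,\theta^\xi)\,dt=\tfrac1T\E\int_0^T l(X^\xi_t,\theta^\star)\,dt-\phi(\theta^\xi-\theta^\star)^2+\phi(\theta^\star-\theta^\xi)(\theta^\star\!-\!\cdots)+o(1).$$
I will carry out this bookkeeping carefully, expecting the net difference to be
$$\bfJ^{mfc}(\xi)-\bfJ(\xi,\theta^\star,\lambda^\star)=\tfrac{\psi}{2}(\theta^\xi-\theta^\star)^2\ge0,$$
up to $o(1)$ terms that are controlled by \eqref{eq:integrability_control} and $m_T\to\theta^\xi$. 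Since $\bfJ$ carries a limsup, the convergent parts can be pulled out.

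\textbf{Step 3: conclusion.} Combining the steps,
$$\bfJ^{mfc}(\xi)\ge\bfJ(\xi,\theta^\star,\lambda^\star)\ge\bfJ(\xi^\star,\theta^\star,\lambda^\star)=\bfJ^{mfc}(\xi^\star),$$
where the middle inequality is the MFG optimality in Definition \ref{def:mfg:solution}(i).

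The main obstacle is Step 1's ergodic averaging: proving $\tfrac1T\E\int_0^TX_t\,dt\to\theta$ from uniqueness of the stationary law. This requires showing that limit points of the Cesàro laws are invariant for a singularly controlled, possibly non-Markov process. If that fails in general, I would instead work directly with the averaged-law limit points and use only that their first moments equal $\theta$, which is the role of \eqref{eq:integrability_control}.
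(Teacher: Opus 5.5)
Your Step 2 fails, and the whole argument rests on it. Carry out the bookkeeping you deferred. For any $\xi\in\cB_{mfc}$ with $m_T\to\theta^\xi$, the time average of the integrand difference $l(X^\xi_t,\theta^\xi)-l(X^\xi_t,\theta^\star)-\lambda^\star(\theta^\star-X^\xi_t)$ equals
\[
\frac{\phi}{2}(\theta^\star-\theta^\xi)(2m_T-\theta^\star-\theta^\xi)+\frac{\psi}{2}\big[(\theta^\xi-\bar\theta)^2-(\theta^\star-\bar\theta)^2\big]-\lambda^\star(\theta^\star-m_T).
\]
With $\lambda^\star=\psi(\bar\theta-\theta^\star)$, this converges to $\frac{\psi-\phi}{2}(\theta^\xi-\theta^\star)^2$. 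So the exact identity is
\[
\bfJ^{mfc}(\xi)=\bfJ(\xi,\theta^\star,\lambda^\star)+\frac{\psi-\phi}{2}(\theta^\xi-\theta^\star)^2,
\]
not $+\frac{\psi}{2}(\theta^\xi-\theta^\star)^2$. The $\psi$-term and the $\lambda^\star$-penalty do combine as you expected, but the $\phi$-term does not cancel. You correctly noticed that the frozen cost $\frac{\phi}{2}(x-\theta^\star)^2$ exceeds $\frac{\phi}{2}(x-\theta^\xi)^2$ on average by $\frac{\phi}{2}(\theta^\xi-\theta^\star)^2$. That is exactly the wrong direction for the inequality $\bfJ^{mfc}(\xi)\ge\bfJ(\xi,\theta^\star,\lambda^\star)$ you need. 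This inequality is therefore false whenever $\phi>\psi$ and $\theta^\xi\neq\theta^\star$ (e.g.\ $\phi=1$, $\psi=0.5$ as in Section~\ref{sec:numerics}), and the chain in Step 3 breaks at its first link. Your argument is a correct, and simpler, proof only under the extra hypothesis $\psi\ge\phi$.

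The obstruction is structural. Up to a constant, the frozen MFG cost is the linear (flat) derivative of the stationary MFC functional at $p^\star_\infty$. Comparing a single control's two costs law by law can only work if that functional is convex under mixtures of measures. The variance part $\int x^2\,d\mu-(\int x\,d\mu)^2$ is concave in that sense.

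What is missing is a pathwise coupling. $X^{\xi}$ and $X^{\xi^\star}$ are driven by the same $W$ through linear dynamics, and along this coupling the variance is convex. The paper's proof uses exactly this:
\begin{enumerate}
\item it applies joint convexity of $(x,\theta)\mapsto l(x,\theta)$ between $(X^{\xi^\star}_t,\theta^\star)$ and $(X^{\xi}_t,\theta^\xi)$;
\item it uses \eqref{mfg:cons:lambda} to turn the averaged $l_\theta$-term into $-\lambda^\star(\theta^\star-\theta^\xi)$;
\item it closes with the first-order condition \eqref{eq:optimality:first_order_condition} of Proposition~\ref{prop:first_order}, which comes from MFG optimality by perturbing $\xi^\star$ in the direction $\xi-\xi^\star$.
\end{enumerate}
This first-order, rather than zeroth-order, use of MFG optimality is what requires \eqref{eq:integrability_control} for both $\xi^\star$ and $\xi$.
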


Finally, we prove that there exists a solution to the potential stationary MFG. In this way, by Theorem \ref{thm:sufficient_condition}, we deduce also the existence of a solution to the stationary MFC problem. 
\begin{theorem}\label{thm:mfg:existence}
Let Assumption \ref{ass:phi_rho} hold true.
There exists a solution to the potential stationary MFG, which satisfies conditions \eqref{eq:sufficient_condition:assumption_limit} and \eqref{eq:integrability_control}.
\end{theorem}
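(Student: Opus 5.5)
The plan is to freeze the interaction parameters $(\theta,\lambda)\in\R^2$, solve the resulting ergodic singular control problem explicitly, and then locate a fixed point for the two consistency conditions \eqref{mfg:cons:theta}--\eqref{mfg:cons:lambda}. With $(\theta,\lambda)$ fixed, the running cost $x\mapsto l(x,\theta)-\lambda x$ is a convex quadratic with second derivative $\rho+\phi$. By the connection with Dynkin games recalled in the introduction, the optimal control is expected to be of reflection type at two barriers $a(\theta,\lambda)<b(\theta,\lambda)$. These barriers are characterized as the unique solution of the two-equation system obtained from smooth fit of the associated Dynkin game with obstacles $-K_-$ and $K_+$, where the game is posed for the uncontrolled OU process with running reward $l_x(x,\theta)-\lambda$ discounted at rate $\delta$. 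The reflected OU process $X^{a,b}$ has the explicit stationary density $p_\infty(x)\propto e^{-\delta x^2/\sigma^2}\ind_{[a,b]}(x)$. From this we get, directly, that the optimal cost is a genuine limit, giving \eqref{eq:sufficient_condition:assumption_limit}, and that the second moment is bounded uniformly in time, giving \eqref{eq:integrability_control}. For both facts I would use a verification argument: write the value function $v$ with $v'$ equal to the value of the Dynkin game, and apply It\^o's formula to the reflected process on $[a,b]$.

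Next, I would show that the barriers depend continuously, in fact in a Lipschitz way, on $(\theta,\lambda)$. The running reward $l_x-\lambda=(\rho+\phi)x-\rho\bar x-\phi\theta-\lambda$ depends on the parameters only through the shift $c:=\rho\bar x+\phi\theta+\lambda$. Hence the barriers are functions $a(c),b(c)$ of this single scalar, increasing in $c$, and I expect Lipschitz constants of the form $\frac{1}{\rho+\phi}$-ish from the implicit function theorem applied to the smooth-fit system. Let $m(c)=\int x\,p_\infty(dx)$ denote the resulting stationary mean. The consistency conditions then read
\begin{equation*}
\theta=m(c),\qquad \lambda=-\int l_\theta(x,\theta)\,p_\infty(dx)=\phi\,(m(c)-\theta)-\psi(\theta-\bar\theta).
\end{equation*}
At a fixed point the first equation forces $\lambda=-\psi(\theta-\bar\theta)$. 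So the system reduces to a one-dimensional equation in $\theta$:
\begin{equation*}
\theta=m\big(\rho\bar x+(\phi-\psi)\theta+\psi\bar\theta\big).
\end{equation*}

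The main obstacle is to solve this scalar fixed point equation. I would argue with the intermediate value theorem applied to $F(\theta)=m(\rho\bar x+(\phi-\psi)\theta+\psi\bar\theta)-\theta$. It suffices to show that $c\mapsto m(c)$ is continuous and grows sublinearly, with slope strictly less than $1/(\phi-\psi)^+$ asymptotically. Heuristically, for large $|c|$ the barriers concentrate near $c/(\rho+\phi)$, so $m(c)\approx c/(\rho+\phi)$. The composite map then has asymptotic slope $(\phi-\psi)/(\rho+\phi)$, which is strictly less than $1$ in absolute value. This is exactly where Assumption \ref{ass:phi_rho} enters: it guarantees $|\phi-\psi|<\rho+\phi$ with room to spare, and it is also what should make the Lipschitz constant estimates of the barriers close. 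If the asymptotic argument is delicate, the first thing I would try is to bound $|m(c)-c/(\rho+\phi)|$ uniformly, using that $a(c)\le m(c)\le b(c)$ and that $b(c)-a(c)$ stays bounded. The bound on $b-a$ should follow from the smooth-fit equations, since the obstacles $\pm K_\pm$ are fixed. With this in hand, $F(\theta)\to\mp\infty$ as $\theta\to\pm\infty$, so a root exists. The root yields $(\xi^\star,\theta^\star,\lambda^\star)$ satisfying Definition \ref{def:mfg:solution}, and the reflected process satisfies \eqref{eq:sufficient_condition:assumption_limit}--\eqref{eq:integrability_control} by the first step.
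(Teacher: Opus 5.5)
Your reduction is the same as the paper's. At a fixed point the first consistency condition forces $\lambda=\psi(\bar\theta-\theta)$, so everything comes down to a sign change of $\theta\mapsto G_1(\theta,\psi(\bar\theta-\theta))-\theta$. \textbf{The gap is in the one estimate that carries the proof}: the growth of the stationary mean $M(s)$ (your $m(c)$) as a function of $s=\rho\bar x+\phi\theta+\lambda$. You give only the heuristic that both barriers concentrate near $s/(\rho+\phi)$. Your fallback, that $a_--a_+$ stays bounded by the smooth-fit equations, is false. Put $h(x)=l(x,\theta)+\lambda(\theta-x)=\frac{\rho+\phi}{2}x^2-sx+\text{const}$. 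Since $-\delta xU_x-\delta U=-\delta(xU)_x$, the free-boundary equation of Lemma~\ref{lemma:Dynkin_game:solution} says that $\frac{\sigma^2}{2}U_x-\delta xU+h$ is constant on $[a_+,a_-]$. Evaluating it at both ends with $U(a_\pm)=\mp K_\pm$ and $U_x(a_\pm)=0$ gives
\[
(a_--a_+)\Big(\tfrac{\rho+\phi}{2}(a_++a_-)-s\Big)=\delta\,(K_+a_++K_-a_-).
\]
By Lemma~\ref{lemma:theta:boundary_separeted}, $a_+\le x_+<x_-\le a_-$ with $x_\pm=s/(\rho+\phi)+O(1)$. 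A bounded width would make the left-hand side $O(1)$, while the right-hand side grows linearly in $|s|$. So the width is unbounded; it grows like $\sqrt{|s|}$. Your heuristic is wrong for one of the two barriers: only the barrier on the side toward which the Ornstein--Uhlenbeck drift pushes stays close to $x_\pm$, and the other runs off. The uniform bound on $|M(s)-s/(\rho+\phi)|$ that you hoped to get from $a_+\le M\le a_-$ therefore does not follow.

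The estimate you need is nevertheless true, and a concentration argument gives it without any width bound. The mean of $p'$ restricted to $[\alpha,\beta]$ is nondecreasing in both $\alpha$ and $\beta$; this is the same computation as for $F'(\lambda)$ in Proposition~\ref{prop:mfg:fixed_mean}. Lemma~\ref{lemma:theta:boundary_separeted} then yields $\E[Z\mid Z\le x_-]\le M(s)\le \E[Z\mid Z\ge x_+]$ with $Z\sim N(0,\sigma^2/(2\delta))$. Gaussian tail bounds give $\min\{x_-,0\}-C\le M(s)\le \max\{x_+,0\}+C$ for a constant $C$ depending only on $\sigma,\delta$. Since $\theta\mapsto x_\pm(s(\theta))$ has slope $(\phi-\psi)/(\rho+\phi)<1$, this produces the sign change for all admissible parameters. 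You still need continuity of $a_\pm$, which you only assert via an implicit-function argument on the smooth-fit system without checking nondegeneracy; the paper obtains it from the Dynkin-game representation in Lemma~\ref{lemma:theta:boundary_regularity}.

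The paper argues differently. It uses the slope bounds $\frac{\phi}{\rho+\phi}\le\partial_\theta a_\pm\le\frac{2\phi}{\rho+\phi}$ and $\frac{1}{\rho+\phi}\le\partial_\lambda a_\pm\le\frac{2}{\rho+\phi}$ of Lemma~\ref{lemma:theta:boundary_regularity}, together with $a_+\le G_1\le a_-$. The factor $2$ in these bounds is exactly why it needs $\frac{2\phi}{\rho+\phi}<1$, i.e.\ Assumption~\ref{ass:phi_rho}. This also shows your account of the assumption is off. The condition $\phi<\rho$ does not imply $|\phi-\psi|<\rho+\phi$ (take $\psi>\rho+2\phi$), and that inequality is not needed anyway: only $(\phi-\psi)^+<\rho+\phi$ matters, and it always holds. (Minor: with $v_x=U$ the Dynkin obstacles are $-K_+$ and $K_-$, not $-K_-$ and $K_+$.)
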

The proof of Theorem \ref{thm:mfg:existence} is given in Section \ref{sec:sufficient}.

\begin{remark}
As the reader may notice, the necessary and the sufficient conditions are obtained in different classes of controls.
Indeed, Theorem \ref{thm:necessary_condition} holds true in the whole class $\cB_{mfc}$, while for Theorem \ref{thm:sufficient_condition} we must restrict to the class of controls in $\cB_{mfc}$ that satisfy the extra integrability condition \eqref{eq:integrability_control}.
This is due to the first-order optimality conditions (see Proposition \ref{prop:first_order}) needed to relate the optimality condition for the potential stationary MFG and the stationary MFC problem, which require extra integrability to hold true.
\end{remark}

\section{From mean-field control solutions to mean-field game solutions}\label{sec:necessary}

In this section, we aim at showing that any solution to the stationary MFC problem \eqref{eq:cost:ergodic} is also a solution to the stationary potential MFG of Definition \ref{def:mfg:solution}.
To this end, we introduce the following constrained optimization problem:
for any $\theta \in \R$, define the set
\begin{equation}\label{eq:strategies:constrained}
    \cB_{\theta} \coloneqq \left\{ \xi \in \cB_{mfc}: \, \theta^{\xi} = \theta \right\},
\end{equation}
and set
\begin{equation}\label{eq:value_function:constrained}
    V(\theta) \coloneqq \inf_{\xi \in \cB_{\theta}} \bfJ^{mfc}(\xi).
\end{equation}
We refer to $V(\theta)$ as the value function of the constrained optimization problem.
We notice that $\cB_{mfc} = \cup_{\theta \in \R} \cB_{\theta}$ and 
\[
\inf_{\theta \in \R} V(\theta) =\inf_{\xi \in \cB_{mfc}} \bfJ^{mfc}(\xi).
\]
We connect the constrained optimization problem and the stationary MFC problem as follows: Suppose that $\xi^\star$ is optimal for the stationary MFC problem and set $\theta^\star=\theta^{\xi^\star}$.
Moreover, suppose that the constrained optimization problem \eqref{eq:value_function:constrained} admits an optimum $\xi^\star(\theta)$ for any $\theta$ in $\R$. 
Then, it holds
\[
\bfJ^{mfc}(\xi^\star) =  \inf_{\theta \in \R} \inf_{\xi \in \cB_\theta} \bfJ^{mfc}(\xi) = V(\theta^\star).
\]
Therefore, if $\xi^\star$ is optimal for the stationary MFC problem, its stationary mean $\theta^\star$ maximizes the value function of the constrained optimization problem.
Provided that the value function $V(\theta)$ is differentiable, a necessary condition for the optimality of $\xi^\star$ is that its stationary mean $\theta^\star$ satisfies $V_\theta(\theta^\star) = 0$.
Thus, the solution strategy to prove the necessary condition is as follows: Solve the constrained optimization problem, show that the value function is differentiable and connect $V_\theta(\theta^\star) = 0$ with the consistency condition \eqref{mfg:cons:lambda} of the potential stationary MFG.

The constrained optimization problem is solved by using a Lagrange multiplier approach.
We introduce the Lagrange multiplier $\lambda \in \R$ and notice that, for any control $\xi \in \cB_\theta$, it holds
\begin{equation}\label{eq:equality_costs}
    \bfJ^{mfc}(\xi) = \bfJ^{mfc}(\xi) + \lambda(\theta - \theta^\xi) = \bfJ(\xi,\theta,\lambda), \quad \forall \, \lambda \in \R,
\end{equation}
with $\bfJ(\xi,\theta,\lambda)$ defined by \eqref{eq:mfg:payoff}.
We first show that there exists a control $\xi^\star(\theta,\lambda)$ that solves the unconstrained optimization problem. This is accomplished in Section \ref{sec:solution_ctrl_problem}.
Then, we show that it is possible to choose $\lambda$ so that the optimal control satisfies the constraint. 
Since the solution to the unconstrained problem is optimal in the broader class $\cB$ and, for a suitable choice of $\lambda$, it satisfies the constraint, it is also optimal in the constrained class $\cB_\theta$. This is the content of Section \ref{sec:constrained}.

Having solved the constrained optimization problem, in Section \ref{sec:necessary_final} we finally show that the solution to the stationary MFC problem is also a solution to the potential stationary MFG. 
We prove that the value function $V(\theta)$ is differentiable and we provide a probabilistic representation for its derivative in such a way that it involves the parameters $(\theta,\lambda)$ only.
Then, we show that, if $V_\theta(\theta^\star) = 0$, condition \eqref{mfg:cons:lambda} must hold.
This concludes the proof of the necessary condition.

\begin{remark}
The outlined procedure reveals the nature of the additional parameter $\lambda$ in the potential stationary MFG: $\lambda$ should be the Lagrange multiplier associated to a value $\theta$ that maximizes the value function $V(\theta)$ of the constrained optimization problem.
Thus,  while the first consistency condition is an admissibility property, stating that the optimal control belongs to $\cB_\theta$ for some $\theta \in \R$, the second consistency condition should be read as an optimality condition on the stationary mean $\theta$, yielding the optimality of the control in $\cB_{mfc}$.
\end{remark}

\begin{remark}
Equation \eqref{eq:equality_costs} can be justified as follows: as $\xi \in \cB_{mfc}$, the state process $X^{\xi}$ is ergodic, and so it holds
\[
\lim_{T \to \infty} \frac{1}{T} \E \left[ \int_0^T \lambda (\theta^{\xi} - X^{\xi}_t) dt \right] = \lambda \left( \theta^{\xi} - \lim_{T \to \infty} \frac{1}{T} \E \left[ \int_0^T X^{\xi}_t dt \right] \right) = \lambda \left( \theta^{\xi} - \int_{\R} x \, p^{\xi}_\infty(dx) \right) = 0.
\]
Then, by exploiting the equality $\varlimsup_{n \to \infty} a_n + \lim_{n \to \infty} b_n = \varlimsup_{n \to \infty}(a_n + b_n)$, for a pair of sequences $(a_n)_{n \geq 1}$, $(b_n)_{n \geq 1}$ with $(b_n)_{n \geq 1}$ convergent, we get \eqref{eq:equality_costs}.
\end{remark}

\subsection{Solution to the ergodic singular control problem}\label{sec:solution_ctrl_problem}
In this section, we fix $(\theta,\lambda) \in \R^2$ and we solve the ergodic singular control problem with cost functional \eqref{eq:mfg:payoff}, by relying on a suitable connection with Dynkin games.
In particular, we prove that, for any $(\theta,\lambda) \in \R^2$, there exists a solution $\xi^{\star}(\theta,\lambda)$ of the ergodic singular control problem with dynamics \eqref{application:dynamics} and cost functional \eqref{eq:mfg:payoff}, where $\xi^{\star}(\theta,\lambda)$ is a policy of threshold type.
Moreover, we investigate the regularity of the thresholds with respect to $(\theta,\lambda)$.

\smallskip
For any $(\theta,\lambda) \in \R^2$, we introduce an auxiliary Dynkin game through the upper value function  $U : \R \times \R^2 \to \R$
\begin{equation}\label{eq:dynkin}
U(x,\theta,\lambda) \coloneqq \inf_{\tau \geq 0} \sup_ {\eta \geq 0} \E\left[\int_0^{\tau \land \eta} e^{ -\delta t } \big( l_x(X_t,\theta) - \lambda \big) \, dt + K_- e^{- \delta\tau} \ind_{\tau < \eta} - K_+ e^{-\delta \eta } \ind_{\eta < \tau} \right],
\end{equation}
where $\tau$, $\eta$ are $\bbF$-stopping times and 
\begin{equation}\label{eq:instant_cost_der}
    l_x(x,\theta) = \rho (x - \bar{x}) + \phi (x - \theta) = (\phi +\rho)x - \phi \theta -\rho \bar{x}.
\end{equation}
For $\tau$, $\eta$ $\bbF$-stopping times, define
\[
M^{\theta,\lambda}_{x}(\tau,\eta) \coloneqq \E\left[\int_0^{\tau \land \eta} e^{ -\delta t } \big( l_x(X_t,\theta) - \lambda \big) \, dt + K_- e^{- \delta \tau } \ind_{\tau < \eta} - K_+ e^{-\delta \eta } \ind_{\eta < \tau} \right],
\]
so that $U(x,\theta,\lambda) = \inf_{\tau \geq 0} \sup_ {\eta \geq 0} M^{\theta,\lambda}_{x}(\tau,\eta)$.
Occasionally, we denote by $X^{x}$ the solution of \eqref{application:dynamics} starting from $x$ at time $0$ to stress the dependence on the initial data.
We observe that $X^{x}$ has the following explicit representation:
\begin{equation}\label{eq:explicit_representation}
    X^{x}_t = e^{-\delta t}\left( x + \sigma\int_0^t e^{\delta s} dW_s \right).
\end{equation}

\smallskip
We start by defining the stopping and continuation regions by  
\begin{equation}\label{eq:stopping_continuation_region}
    \cS^{\theta,\lambda}_+ = \{ x \in \R: \, U(x,\theta,\lambda) \leq -K_+\}, \quad \cS^{\theta,\lambda}_- = \{ x \in \R: \, U(x,\theta,\lambda) \geq K_-\}, \quad \cC^{\theta,\lambda} = \R \setminus (\cS^{\theta,\lambda}_+ \cup \cS^{\theta,\lambda}_-).
\end{equation}
For any $x\in \R$ and $(\theta,\lambda) \in \R^2$, the stopping regions $\cS^{\theta,\lambda}_\pm$ are related to a saddle-point $(\tau^\star,\eta^\star)$ for the Dynkin game \eqref{eq:dynkin}, as ensured by the following Lemma:
\begin{lemma}\label{ctrl:lemma:saddle_point}
It holds 
\[
U(x,\theta,\lambda) =  \sup_ {\eta \geq 0} \inf_{\tau \geq 0} \E\left[\int_0^{\tau \land \eta} e^{ -\delta t } \big( l_x(X^{x}_t,\theta) - \lambda \big) \, dt - K_+ e^{-\delta \tau } \ind_{\tau < \eta} + K_- e^{- \delta\eta } \ind_{\eta < \tau} \right]
\]
as well, and that there exists a saddle-point $(\tau^*,\eta^*)$ given by
\begin{equation}\label{eq:dynkin:saddle_point}
    \tau^\star(x,\theta,\lambda) \coloneqq \inf\{ t \geq 0: \, X^{x}_t \in \cS^{\theta,\lambda}_-\}, \quad \eta^\star(x,\theta,\lambda) \coloneqq \inf\{ t \geq 0: \, X^{x}_t \in \cS^{\theta,\lambda}_+\}.
\end{equation}
\end{lemma}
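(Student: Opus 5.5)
The plan is to reduce the lemma to the general theory of Dynkin games for a one-dimensional, time-homogeneous Feller diffusion with discounting, in the spirit of Ekström--Peskir (and Lepeltier--Maingueneau for the martingale approach). I will verify the hypotheses of that theory for the present game; the existence of the value (upper equals lower) and the optimality of the first entry times into the stopping regions then follow.

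First, I will note that $X^x$ is a strong Markov, Feller process with continuous paths, by the explicit representation \eqref{eq:explicit_representation}. The running payoff $x\mapsto l_x(x,\theta)-\lambda$ is affine, hence continuous. The terminal payoffs are the constants $K_-$ (upper obstacle, paid by the minimizer $\tau$) and $-K_+$ (lower obstacle, paid by the maximizer $\eta$). They satisfy $-K_+<K_-$ strictly, which is the standard ordering needed to avoid simultaneous stopping issues.

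Second, I will establish integrability. By \eqref{eq:explicit_representation}, $\E[\sup_t e^{-\delta t}|X^x_t|]$ may fail to be finite, so I will work instead with the integrated form. The quantity $\E[\int_0^\infty e^{-\delta t}|l_x(X^x_t,\theta)-\lambda|dt]\le C(1+|x|)$ is finite, since $\E|X^x_t|\le |x|+\sigma/\sqrt{2\delta}$. I will then define
\[
G_t=\int_0^t e^{-\delta s}(l_x(X^x_s,\theta)-\lambda)\,ds
\]
and rewrite the game payoff as the process $G_{\tau\wedge\eta}$ plus discounted constant obstacles. The process $G$ is continuous, and uniform integrability of $\sup_t|G_t|$ follows from the bound above. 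Since $e^{-\delta t}\to0$, the convention on $\{\tau=\eta=\infty\}$ is harmless. This puts us in the setting of the classical result, for instance Theorem 2.1 in Ekström--Peskir, "Optimal stopping games for Markov processes": the game has a value, $U$ coincides with the lower value, and the entry times into $\{U\ge K_-\}$ and $\{U\le -K_+\}$ form a saddle point, provided $U$ is continuous.

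Third, I will verify the continuity of $x\mapsto U(x,\theta,\lambda)$, which I expect to be the main obstacle. The stopping sets must be closed for $\tau^\star$ and $\eta^\star$ to be hitting times of closed sets and for the saddle-point argument to apply. I plan to prove Lipschitz continuity directly. Pathwise, $X^x_t-X^y_t=e^{-\delta t}(x-y)$, so for any fixed pair $(\tau,\eta)$,
\[
|M^{\theta,\lambda}_x(\tau,\eta)-M^{\theta,\lambda}_y(\tau,\eta)|\le(\phi+\rho)\int_0^\infty e^{-2\delta t}dt\,|x-y|=\frac{\phi+\rho}{2\delta}|x-y|.
\]
Taking inf-sup preserves this bound, so $U$ is Lipschitz in $x$, and hence $\cS^{\theta,\lambda}_\pm$ are closed. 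Finally, I will check that $\cS^{\theta,\lambda}_+$ and $\cS^{\theta,\lambda}_-$ are disjoint, since $-K_+<K_-$. I will then verify the saddle-point inequalities by the standard martingale and submartingale properties of $e^{-\delta t}U(X_t)+G_t$ stopped at $\tau^\star\wedge\eta^\star$, as in the cited theory. Together these give the lemma.
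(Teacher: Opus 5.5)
Your proposal is correct and is essentially the paper's argument: the paper proves this lemma by a direct appeal to \cite[Theorem 2.1]{peskir2008optimal_stopping_games}, and you appeal to the same result while spelling out its hypotheses (strong Markov/Feller dynamics, the integrability bound $\E[\int_0^\infty e^{-\delta t}\vert l_x(X^x_t,\theta)-\lambda\vert\,dt]<\infty$, the ordering $-K_+<K_-$, closedness of $\cS^{\theta,\lambda}_\pm$). You also rightly read the lower value with the same payoff $M^{\theta,\lambda}_x(\tau,\eta)$, as the paper does in the proof of Lemma \ref{lemma:U_monotonicity}; the displayed formula in the statement swaps the roles of $K_+$ and $K_-$. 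Your Lipschitz bound on $x\mapsto U(x,\theta,\lambda)$, obtained directly from the $\inf\sup$ definition and $X^x_t-X^y_t=e^{-\delta t}(x-y)$, is a small bonus: the paper proves continuity of $U$ only afterwards, in Lemma \ref{lemma:U_monotonicity}, and uses this very lemma to do so.
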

The result follows from a straightforward application of \cite[Theorem 2.1]{peskir2008optimal_stopping_games}, and thus we omit it.

\begin{lemma}\label{lemma:U_monotonicity}
\begin{enumerate}
    \item $x \mapsto U(x,\theta,\lambda)$ is non-decreasing and continuous for any $(\theta,\lambda)$ fixed.
    \item The map $\theta \mapsto U(x,\theta,\lambda)$ is non-increasing for any $(x,\lambda)$ fixed, and the map $\lambda \mapsto U(x,\theta,\lambda)$ is non-increasing for any $(x,\theta)$ fixed.
    \item $U(x,\theta,\lambda)$ is jointly Lipschitz continuous in $(x,\theta,\lambda)$.
\end{enumerate}
\end{lemma}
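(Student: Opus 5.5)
Every property will be read off a pathwise comparison of the payoff $M^{\theta,\lambda}_x(\tau,\eta)$, using that the uncontrolled Ornstein--Uhlenbeck flow depends on the initial condition explicitly: by \eqref{eq:explicit_representation}, $X^{x}_t - X^{y}_t = e^{-\delta t}(x-y)$ for all $t\ge 0$, pathwise. The basic tool is the elementary fact that if $|M^{\theta,\lambda}_x(\tau,\eta) - M^{\theta',\lambda'}_{y}(\tau,\eta)| \le C$ uniformly in the pair of stopping times, then $|U(x,\theta,\lambda)-U(y,\theta',\lambda')|\le C$. Likewise, a one-sided inequality between the payoffs that holds for every pair $(\tau,\eta)$ passes to the $\inf\sup$. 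The stopping times are the same on both sides, because they are chosen on the common probability space, and only the initial point or the parameters change.

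For monotonicity, fix $(\tau,\eta)$. The terminal terms $K_-e^{-\delta\tau}\ind_{\tau<\eta}-K_+e^{-\delta\eta}\ind_{\eta<\tau}$ do not depend on $(x,\theta,\lambda)$. The running term involves $l_x(X^x_t,\theta)-\lambda=(\phi+\rho)X^x_t-\phi\theta-\rho\bar x-\lambda$ by \eqref{eq:instant_cost_der}. This is pathwise non-decreasing in $x$, because $X^x_t$ is increasing in $x$ and $\phi+\rho>0$. It is non-increasing in $\theta$ since $\phi>0$, and non-increasing in $\lambda$. Integrating against $e^{-\delta t}$ over $[0,\tau\wedge\eta]$ preserves these inequalities, so $M$ inherits the monotonicity for every pair $(\tau,\eta)$, and hence so does $U$. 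This proves item (2) and the monotone part of item (1).

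For Lipschitz continuity, subtract the two payoffs with the same $(\tau,\eta)$:
\[
\bigl|M^{\theta,\lambda}_x(\tau,\eta)-M^{\theta',\lambda'}_y(\tau,\eta)\bigr| \le \E\Bigl[\int_0^\infty e^{-\delta t}\bigl((\phi+\rho)e^{-\delta t}|x-y|+\phi|\theta-\theta'|+|\lambda-\lambda'|\bigr)dt\Bigr].
\]
Evaluating the integrals, the right-hand side equals $\frac{\phi+\rho}{2\delta}|x-y|+\frac{\phi}{\delta}|\theta-\theta'|+\frac1\delta|\lambda-\lambda'|$. This bound is uniform in $(\tau,\eta)$, including the case $\tau\wedge\eta=\infty$, where the integral is still finite thanks to the discount factor. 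It therefore transfers to $U$ and gives joint Lipschitz continuity with explicit constants, which in particular yields the continuity in $x$ claimed in item (1).

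The only point needing care is well-posedness. We must check that $M$ is finite for every pair $(\tau,\eta)$, including unbounded stopping times, so that the differences above are legitimate. This holds because $\E\int_0^\infty e^{-\delta t}|l_x(X^x_t,\theta)-\lambda|\,dt<\infty$, using $\E|X^x_t|\le |x|+\sigma/\sqrt{2\delta}$, and because the terminal terms are bounded by $\max(K_+,K_-)$. Since the estimates hold for each fixed pair before any optimization, no saddle-point argument (Lemma \ref{ctrl:lemma:saddle_point}) is needed.
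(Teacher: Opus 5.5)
Your proof is correct, and its core is the same as the paper's. Monotonicity is read off the pathwise ordering of $l_x(X^x_t,\theta)-\lambda$ for each fixed pair $(\tau,\eta)$. The Lipschitz bound comes from the cancellation of the terminal terms together with $X^x_t-X^y_t=e^{-\delta t}(x-y)$.

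The difference is in how you transfer the payoff estimate to $U$. The paper uses the equality $\sup_\eta\inf_\tau=\inf_\tau\sup_\eta$ from Lemma \ref{ctrl:lemma:saddle_point}, which rests on Peskir's theorem. It selects an $\epsilon$-optimal $\bar\eta$ for one triple and an $\epsilon$-optimal $\tau^n$ for the other, bounds the difference of the payoffs along that particular pair, and then lets $\epsilon\downarrow 0$. You instead use that $\inf_\tau\sup_\eta$ is $1$-Lipschitz with respect to the supremum over all pairs of stopping times. This is legitimate here because your bound is uniform in $(\tau,\eta)$ and all values are finite.

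Your version therefore does not depend on the existence of a value or a saddle point. It also gives the explicit constants $\frac{\phi+\rho}{2\delta}$, $\frac{\phi}{\delta}$, $\frac{1}{\delta}$, whereas the paper records only a generic $C$ and phrases the conclusion as continuity. The paper's detour costs it nothing, however, since Lemma \ref{ctrl:lemma:saddle_point} is needed later anyway for the stopping regions and the derivative formulas \eqref{eq:value_dynkin:derivatives}.
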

\begin{proof}
By relying on the explicit representation \eqref{eq:explicit_representation}, it follows that, for $x_1 \leq x_2$, $X^{x_1}_t \leq X^{x_2}_t$ for any $t \geq 0$, $\P$-a.s.
Moreover, since $x \mapsto l_x(x,\theta) - \lambda$ is increasing for any $(\theta,\lambda)$ fixed, $x \mapsto M^{\theta,\lambda}_{x}(\tau,\eta)$ is non-decreasing. This implies that $U(x,\theta,\lambda)$ is non-decreasing as well.
As for $\theta$, we notice that $l_{x \theta}(x,\theta) = - \phi < 0$, so that one has that $\theta \mapsto M^{\theta,\lambda}_{x}(\tau,\eta)$ is non-increasing for any $(\tau,\eta)$, which implies that $\theta \mapsto U(x,\theta,\lambda)$ is non-increasing as well. The monotonicity with respect to $\lambda$ is proven analogously.

As for continuity, let $(x_n,\theta_n,\lambda_n) \to (x,\theta,\lambda)$. Fix $\epsilon > 0$ and let $\bar\eta$ so that
\begin{equation}\label{filtered:lemma_regularity_U:leq_bound}
U(x,\theta,\lambda) = \sup_{\eta}\inf_{\tau} M^{\theta,\lambda}_{x}(\tau,\eta) \leq \inf_{\tau} M^{\theta,\lambda}_{x}(\tau,\bar\eta) + \epsilon \leq M^{\theta,\lambda}_{x}(\tau,\bar\eta) + \epsilon \quad \forall \tau,
\end{equation}
where the first equality holds by Lemma \ref{ctrl:lemma:saddle_point}.
Analogously, let $\tau^n$ so that 
\begin{equation}\label{filtered:lemma_regularity_U:geq_bound}
U(x_n,\theta_n,\lambda_n) =  \inf_{\tau} \sup_{\eta} M^{\theta_n,\lambda_n}_{x_n}(\tau,\eta) \geq \sup_{\eta} M^{\theta_n,\lambda_n}_{x_n}(\tau^n,\eta) - \epsilon \geq M^{\theta_n,\lambda_n}_{x_n}(\tau^n,\eta) - \epsilon \quad \forall \eta.
\end{equation}
By taking the differences, we get
\begin{align*}
    U(x,\theta,\lambda) & - U(x_n,\theta_n,\lambda_n) \leq M^{\theta,\lambda}_{x}(\tau^n,\bar{\eta}) - M^{\theta^n,\lambda^n}_{x_n}(\tau^n,\bar{\eta}) +2\epsilon \\
    & = \E\left[\int_0^{\tau^n \land \bar{\eta}} e^{-\delta t } \left( l_x(X^{x}_t,\theta) - l_x(X^{x_n}_t,\theta^n) + ( \lambda^n - \lambda ) \right) dt \right] +2\epsilon \\
    & \leq \E\left[\int_0^\infty e^{-\delta t }  \left( (\phi + \rho) \vert X^{x_n}_t - X^{x}_t \vert + \phi \vert \theta^n - \theta \vert + \vert \lambda^n - \lambda \vert \right) dt \right] +2\epsilon.
\end{align*}
By the same reasoning, exchanging the roles of $U(x,\theta,\lambda)$ and $U(x_n,\theta_n,\lambda_n)$ in \eqref{filtered:lemma_regularity_U:leq_bound} and \eqref{filtered:lemma_regularity_U:geq_bound}, we bound the absolute value of the difference by
\begin{align*}
    \vert U(x,\theta,\lambda) & - U(x_n,\theta_n,\lambda_n) \vert \leq C\E\left[\int_0^\infty e^{-\delta t } \left( \vert X^{x}_t - X^{x_n}_t\vert + \vert \theta^n - \theta \vert + \vert \lambda^n - \lambda \vert \right)  dt \right] +2\epsilon \\
    & \leq C \left(  \vert x - x_n \vert \int_0^\infty e^{-2\delta t } dt + \left(\vert \theta^n - \theta \vert + \vert \lambda^n - \lambda \vert\right) \int_0^\infty e^{-\delta t } dt  \right) +2\epsilon \overset{n \to \infty}{\longrightarrow} 2\epsilon
\end{align*}
with $C > 0$ a positive constant, where we used the explicit representation \eqref{eq:explicit_representation} to handle the dependence on $x$.
By arbitrarity of $\epsilon$, we get the desired continuity.
\end{proof}

We notice that Lemma \ref{lemma:U_monotonicity} implies that the stopping regions $\cS_\pm^{\theta,\lambda}$ are closed and the continuation region $\cC^{\theta,\lambda}$ is open for any $(\theta,\lambda)$.
For any $(\theta,\lambda) \in \R^2$ define the functions
\begin{equation}\label{eq:barriers}
    a_-(\theta,\lambda) \coloneqq \inf\{ x \in \R: \; U(x,\theta,\lambda) \geq K_- \}, \quad a_+(\theta,\lambda) \coloneqq \sup\{ x \in \R: \; U(x,\theta,\lambda) \leq -K_+ \}.
\end{equation}
By continuity and monotonicity of $U(\cdot,\theta,\lambda)$, exploiting the bounds $-K_+ \leq U(x,\theta,\lambda) \leq K_-$, the sets $\cS^{\theta,\lambda}_\pm$ and $\cC^{\theta,\lambda}$ in \eqref{eq:stopping_continuation_region} can be expressed in terms of $a_\pm(\theta,\lambda)$ as
\begin{equation}\label{eq:filtered:regions_U}
\begin{aligned}
    & \cS^{\theta,\lambda}_+ = \{  x \in \R: \, x \leq a_+(\theta,\lambda) \}, \quad \cS^{\theta,\lambda}_- = \{  x \in \R: \, x \geq a_-(\theta,\lambda) \}, \\
    & \cC^{\theta,\lambda} = \{  x \in \R: \, a_+(\theta,\lambda) < x < a_-(\theta,\lambda) \}.
\end{aligned}
\end{equation}

We notice that there exist two unique function $x_\pm(\theta,\lambda)$, $x_+(\theta,\lambda) < x_-(\theta,\lambda)$ for any $(\theta,\lambda)$, so that
\begin{equation}\label{eq:unique_zeros}
    l_x(x,\theta) - \lambda  - K_- \delta \; \left\{ \begin{aligned}
        & > 0, \quad \forall x > x_+(\theta,\lambda),  \\
        & = 0, \quad \forall x = x_+(\theta,\lambda), \\
        & < 0, \quad \forall x < x_+(\theta,\lambda),
    \end{aligned} \right. \qquad     l_x(x,\theta) - \lambda  + K_+ \delta \; \left\{ \begin{aligned}
        & > 0, \quad \forall x > x_-(\theta,\lambda),  \\
        & = 0, \quad \forall x = x_-(\theta,\lambda), \\
        & < 0, \quad \forall x < x_-(\theta,\lambda).
    \end{aligned} \right.
\end{equation}
In particular, the functions $x_\pm(\theta,\lambda)$ are given by
\begin{equation}\label{eq:implicit_functions}
\left\{ \begin{aligned}
    & x_-(\theta,\lambda) = \frac{1}{(\phi + \rho)}\left( \phi\theta + \lambda  +\rho\bar{x} +K_-\delta \right),  \\
    & x_+(\theta,\lambda) = \frac{1}{(\phi + \rho)}\left( \phi\theta + \lambda  +\rho\bar{x} -K_+\delta \right).
\end{aligned} \right.
\end{equation}
This allows us to prove that the barriers $a_\pm(\theta,\lambda)$ are well separated, in the following sense.
\begin{lemma}\label{lemma:theta:boundary_separeted}
It holds $-\infty < a_+(\theta,\lambda) \leq x_+(\theta,\lambda) < x_-(\theta,\lambda) \leq a_-(\theta,\lambda) < \infty$.
\end{lemma}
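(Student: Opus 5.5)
We argue by comparing the Dynkin game with its immediate-stopping payoffs and use the sign information in \eqref{eq:unique_zeros}. The plan is to show two things: every point $x > x_+(\theta,\lambda)$ lies outside $\cS^{\theta,\lambda}_+$, and every point $x < x_-(\theta,\lambda)$ lies outside $\cS^{\theta,\lambda}_-$. Together with \eqref{eq:filtered:regions_U} this gives $a_+\le x_+$ and $a_-\ge x_-$; the inequality $x_+<x_-$ is immediate from \eqref{eq:implicit_functions}, since $K_\pm>0$. We then show separately that $a_\pm$ are finite.

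For $a_-\ge x_-$, fix $x<x_-(\theta,\lambda)$ and suppose $U(x,\theta,\lambda)\ge K_-$. Take $\tau_0:=\inf\{t\ge0: X^x_t\ge x_-(\theta,\lambda)\}$, which is strictly positive a.s. by path continuity. By Itô's formula applied to $e^{-\delta t}K_-$,
\[
\E\big[K_- e^{-\delta(\tau_0\wedge\eta)}\big]=K_- - \E\Big[\int_0^{\tau_0\wedge\eta}\delta K_- e^{-\delta t}dt\Big].
\]
Hence, for any $\eta$,
\[
M^{\theta,\lambda}_x(\tau_0,\eta)\le K_- + \E\Big[\int_0^{\tau_0\wedge\eta} e^{-\delta t}\big(l_x(X^x_t,\theta)-\lambda+K_+\delta\big)dt\Big] - (K_-+K_+)\E\big[e^{-\delta\eta}\ind_{\eta<\tau_0}\big],
\]
where we used $-K_+\le K_-$. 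The integrand is strictly negative on $[0,\tau_0)$ by \eqref{eq:unique_zeros}, so the right-hand side is $<K_-$ unless $\tau_0\wedge\eta=0$ a.s. In that case $\eta=0$ and the payoff is $-K_+<K_-$. The supremum over $\eta$ needs a uniform gap, not only a strict inequality for each fixed $\eta$. The cleanest route is to evaluate $M^{\theta,\lambda}_x(\tau_0,\eta)$ at the saddle-point optimizer $\eta^\star$ from Lemma \ref{ctrl:lemma:saddle_point}. This gives $U(x,\theta,\lambda)\le \sup_\eta M^{\theta,\lambda}_x(\tau_0,\eta)= M^{\theta,\lambda}_x(\tau_0,\eta^\star)$ for the best response, and a strict gap follows, contradicting $U\ge K_-$. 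The symmetric argument, using $\eta_0:=\inf\{t: X^x_t\le x_+\}$ and $\sup\inf$, shows that no $x>x_+$ satisfies $U\le -K_+$.

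This compares the integrand to $l_x-\lambda+K_+\delta$, which has the sign relevant to $x_-$. However, the stopping cost entering the bound for $\cS_-$ is $K_-$, whose Itô correction gives $+K_-\delta$. So the precise cancellation should be redone: write $M-K_-=\E[\int_0^{\tau_0\wedge\eta}e^{-\delta t}(l_x-\lambda-\delta K_-)dt] - (K_-+K_+)\E[e^{-\delta\eta}\ind_{\eta<\tau_0}]$, with $\tau_0$ the hitting time of the threshold where $l_x-\lambda-\delta K_-$ vanishes. The labels $x_\pm$ are then matched to whichever of $a_\pm$ the computation dictates. Getting this sign bookkeeping to agree with the ordering claimed in the lemma is the main obstacle, together with the uniform strictness just mentioned.

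Finiteness of $a_\pm$ follows from a large-$|x|$ estimate. For $x\to+\infty$, we choose $\tau=0$ if we want to show $U\ge K_-$. For $\eta$ we take a lower bound via the strategy $\eta=\infty$ against $\tau$: the integrand $l_x(X^x_t,\theta)-\lambda\ge (\phi+\rho)e^{-\delta t}x - C(1+|\text{noise}|)$ makes $\inf_\tau M^{\theta,\lambda}_x(\tau,\infty)\to+\infty$ as $x\to\infty$, because any $\tau$ either stops early with cost about $K_-$ or accumulates a large positive integral. Thus $U(x,\theta,\lambda)\ge K_-$ for $x$ large, i.e. $a_-<\infty$. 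Symmetrically, $a_+>-\infty$. By \eqref{eq:filtered:regions_U}, this also shows the regions are nonempty.
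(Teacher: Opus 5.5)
There is a genuine gap in your finiteness step. With $\eta=\infty$ fixed, $\inf_\tau M^{\theta,\lambda}_x(\tau,\infty)\le M^{\theta,\lambda}_x(0,\infty)=K_-$, so this quantity cannot tend to $+\infty$. Your heuristic (``stop early at cost about $K_-$ or accumulate a large integral'') at best shows that $U(x,\theta,\lambda)$ gets close to $K_-$ for large $x$. But $a_-(\theta,\lambda)<\infty$ needs $U(x,\theta,\lambda)\ge K_-$ exactly at some finite $x$. In fact
$\inf_\tau M^{\theta,\lambda}_x(\tau,\infty)-K_-=\inf_\tau\E\big[\int_0^\tau e^{-\delta t}\big(l_x(X^x_t,\theta)-\lambda-\delta K_-\big)\,dt\big]$,
and showing that this vanishes for large $x$ is a nontrivial optimal stopping statement, essentially the claim itself. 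Also, ``choosing $\tau=0$'' cannot produce a lower bound, since $\tau$ is the minimiser's control.

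The missing idea is to argue by contradiction through the saddle point of Lemma~\ref{ctrl:lemma:saddle_point}. If $\cS^{\theta,\lambda}_-=\emptyset$, then $\tau^\star\equiv\infty$, hence
\[
U(x,\theta,\lambda)=\sup_{\eta} M^{\theta,\lambda}_x(\infty,\eta)\ge M^{\theta,\lambda}_x(\infty,\infty)=\E\Big[\int_0^\infty e^{-\delta t}\big(l_x(X^x_t,\theta)-\lambda\big)\,dt\Big].
\]
By \eqref{eq:explicit_representation} the right-hand side is affine in $x$ with slope $(\phi+\rho)/(2\delta)>0$, so it exceeds $K_-$ for large $x$, contradicting $U\le K_-$. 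Symmetrically, $\cS^{\theta,\lambda}_+=\emptyset$ forces $\eta^\star\equiv\infty$, and then $U(x,\theta,\lambda)\le M^{\theta,\lambda}_x(\infty,\infty)\to-\infty$ as $x\to-\infty$. This is the paper's argument.

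The separation step is repairable, but your justification is false. The equality $\sup_\eta M^{\theta,\lambda}_x(\tau_0,\eta)=M^{\theta,\lambda}_x(\tau_0,\eta^\star)$ does not hold in general, because $\eta^\star$ is a best response to $\tau^\star$, not to $\tau_0$. What does hold is the saddle-point inequality $U(x,\theta,\lambda)=\inf_\tau M^{\theta,\lambda}_x(\tau,\eta^\star)\le M^{\theta,\lambda}_x(\tau_0,\eta^\star)$. For this single $\eta^\star$ your per-$\eta$ strict inequality suffices, so no uniform gap is needed. Alternatively, since $0<\tau_0<\infty$ a.s., the random variable inside $M^{\theta,\lambda}_x(\tau_0,\eta)-K_-$ is a.s.\ bounded by $\max\{-(K_-+K_+)e^{-\delta\tau_0},\int_0^{\tau_0}e^{-\delta t}(l_x(X^x_t,\theta)-\lambda-\delta K_-)\,dt\}<0$, which does not depend on $\eta$. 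Your corrected identity also omits the tie term $-K_-\E[e^{-\delta\tau_0}\ind_{\{\eta=\tau_0\}}]$; this is harmless since it is nonpositive.

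Finally, the ``sign bookkeeping'' is not an obstacle. Your corrected computation shows that no point below the zero of $l_x(\cdot,\theta)-\lambda-\delta K_-$ lies in $\cS^{\theta,\lambda}_-$. Symmetrically, no point above the zero of $l_x(\cdot,\theta)-\lambda+\delta K_+$ lies in $\cS^{\theta,\lambda}_+$. These zeros are $x_-(\theta,\lambda)$ and $x_+(\theta,\lambda)$ as given in \eqref{eq:implicit_functions}; the labels in \eqref{eq:unique_zeros} are swapped. This gives exactly the ordering in the lemma. With these repairs, your global comparison up to the hitting time $\tau_0$ is a valid alternative to the paper's local argument, which uses the sub/supermartingale property of the value process stopped at $\eta^\star$ (resp.\ $\tau^\star$) and lets $t\downarrow 0$.
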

\begin{proof}
By \cite[Theorem 2.1]{peskir2008optimal_stopping_games}, the processes
\begin{align}
    & \Big( e^{-\delta (t \land \eta^*) }U(X_{t \land \eta^*}, \theta,\lambda) + \int_0^{t\land \eta^*}e^{- \delta s } \big( l_x(X_s,\theta) -\lambda \big) ds \Big)_{t \geq 0},  \label{eq:lemma_regularity:subm} \\
    & \Big( e^{-\delta (t \land \tau^*) }U(X_{t \land \tau^*},\theta,\lambda) + \int_0^{t\land \tau^*}e^{- \delta s} \big(l_x (X_s,\theta) - \lambda \big) ds \Big)_{t \geq 0} \label{eq:lemma_regularity:supm}
\end{align}
are, respectively, a sub-martingale and a super-martingale under $\P_{x}$.
Let now $x \in \cS^{\theta,\lambda}_-$.
By using the sub-martingale property of the process \eqref{eq:lemma_regularity:subm} and the bound $U(x,\theta,\lambda) \leq K_-$, we deduce
\begin{align*}
& K_- = U(x,\theta,\lambda) \leq \E\left[ e^{-\delta (t \land \eta^*) }U(X_{t \land \eta^*},\theta,\lambda) + \int_0^{t\land \eta^*}e^{- \delta s } \big( l_x(X_s,\theta) -\lambda \big) ds \right] \\
& \leq \E\left[ K_- e^{-\delta (t \land \eta^*) } + \int_0^{t\land \eta^*}e^{- \delta s } \big( l_x(X_s,\theta) -\lambda \big) ds  \right] = K_- + \E\left[  \int_0^{t\land \eta^*}e^{- \delta s } \big( l_x(X_s,\theta) -\lambda -K_- \delta \big) ds  \right]
\end{align*}
for any $t \geq 0$.
This implies
\begin{equation*}
    0 \leq \lim_{t \to 0}\frac{1}{t}\E\left[ \int_0^{t}e^{- \delta s } \big( l_x(X_s,\theta) -\lambda -K_- \delta \big) \ind_{s < \eta^*} ds \right] = l_x(x,\theta) - \lambda -K_- \delta , \;\; \forall x \in \cS^{\theta,\lambda}_-,
\end{equation*}
where we used dominated convergence theorem, by virtue of the representation \eqref{eq:explicit_representation}.
Repeating the same argument for $x \in \cS^{\theta,\lambda}_+$, by relying on the super-martingale property of the process \eqref{eq:lemma_regularity:supm}, we deduce
\begin{equation}\label{eq:barriers:necessarily_separated}
    \left\{ \begin{aligned}
        & l_x(x,\theta) - \lambda -K_-\delta \geq 0, \quad \forall x \in \cS^{\theta,\lambda}_-,  \\
        & l_x(x,\theta) - \lambda +K_+\delta \leq 0, \quad \forall x \in \cS^{\theta,\lambda}_+.
    \end{aligned} \right.
\end{equation}
Equation \eqref{eq:barriers:necessarily_separated} yields
\begin{equation}
\left\{ \begin{aligned}
    & a_-(\theta,\lambda) \geq \inf\{ x \in \R: l_x(x,\theta) -\lambda -K_- \delta  \geq 0 \} = x_-(\theta,\lambda), \\
    & a_+(\theta,\lambda) \leq \sup\{ x \in \R: l_x(x,\theta) -\lambda +K_+ \delta  \leq 0 \} = x_+(\theta,\lambda),
\end{aligned} \right.
\end{equation}
with $x_+(\theta,\lambda) < x_-(\theta,\lambda)$, which implies that $a_+(\theta,\lambda) < a_-(\theta,\lambda)$ as well.

\smallskip
To show that $a_\pm(\theta,\lambda)$ are never infinity, we show by contradiction that $\cS^{\theta,\lambda}_\pm$ are non-empty.
Indeed, suppose that $\cS^{\theta,\lambda}_+ = \emptyset$.
Then, for any $ x \in \R$, it holds $-K_+ < U(x,\theta,\lambda)$, which implies that $\eta^\star(x,\theta,\lambda) = \infty$ $\P$-a.s.
Therefore, 
\begin{multline*}
-K_+ < U(x,\theta,\lambda) = \inf_{\tau}\E\left[\int_0^{\tau} e^{-\delta t} \big( l_x(X_t,\theta) - \lambda \big) dt - K_- e^{-\delta \tau}\right] \\
\leq \E\left[\int_0^\infty e^{-\delta t } \big( l_x(X_t,\theta) -\lambda \big) dt \right] \leq C_1 x + C^{\theta,\lambda},
\end{multline*}
for all $x \in \R$, where $C^{\theta,\lambda}$ is a positive constant depending on $(\theta,\lambda)$ and $C_1$ is a positive constant independent of $(\theta,\lambda)$.
This yields
\[
-K_+ < \lim_{x \to -\infty}  \E\left[\int_0^\infty e^{- \delta t } \big( l_x(X_t,\theta) - \lambda \big) dt \right] \leq \lim_{x \to -\infty} \big( C_1 x + C^{\theta,\lambda} \big) = -\infty,
\]
thus getting a contradiction.
The proof of $a_-(\theta,\lambda) < +\infty$ is dealt with analogously.
\end{proof}
We now investigate the regularity of the thresholds. In particular, we show that they are Lipschitz with respect to the parameters $(\theta,\lambda) \in \R^2$.
\begin{lemma}\label{lemma:theta:boundary_regularity}
\begin{enumerate}
    \item The maps $\theta \mapsto a_\pm(\theta,\lambda)$ is non-decreasing for any $\lambda$ fixed, and $\lambda \mapsto a_\pm(\theta,\lambda)$ is non-decreasing for any $\theta$ fixed.
    \item Set
    \begin{equation}\label{eq:constant_barriers}
    m=\frac{\phi}{\rho + \phi}, \qquad c = \frac{1}{\rho + \phi}.
    \end{equation}
    The maps $\theta \mapsto a_\pm(\theta,\lambda)$ are bi-Lipschitz, in the sense that
    \begin{equation}\label{eq:bilipschitz:theta}
        m \vert \theta - \theta' \vert \leq \vert a_\pm(\theta,\lambda) - a_\pm(\theta',\lambda) \vert \leq 2m \vert \theta - \theta' \vert
    \end{equation}
    for any $\theta,\theta' \in \R$, $\lambda \in \R$.
    Analogously, the maps $\lambda \mapsto a_\pm(\theta,\lambda)$ are bi-Lipschitz, in the sense that
    \begin{equation}\label{eq:bilipschitz:lambda}
        c \vert \lambda - \lambda' \vert \leq \vert a_\pm(\theta,\lambda) - a_\pm(\theta,\lambda') \vert \leq 2c \vert \lambda - \lambda' \vert
    \end{equation}
    for any $\lambda, \lambda' \in \R$, $\theta \in \R$.
    \item The maps $(\theta,\lambda) \mapsto a_\pm(\theta,\lambda)$ are jointly Lipschitz, in the sense that \begin{equation}\label{eq:barriers:lipschitz}
        \vert a_\pm(\theta,\lambda) - a_\pm(\theta',\lambda') \vert \leq 2m \vert \theta - \theta' \vert + 2c \vert \lambda - \lambda' \vert
    \end{equation}
    for any $(\theta,\lambda), (\theta',\lambda') \in \R^2$.
    \item $a_\pm(\theta,\lambda)$ is differentiable for a.e. $(\theta,\lambda) \in \R^2$, and it holds $m \leq \partial_\theta a_\pm(\theta,\lambda) \leq 2m $ and $c \leq \partial_\lambda a_\pm(\theta,\lambda) \leq 2c $.
\end{enumerate}
\end{lemma}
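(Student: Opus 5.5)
The approach is to work directly with the Dynkin game payoff $M^{\theta,\lambda}_x(\tau,\eta)$. I would compare $U$ at shifted points $(x+h,\theta+\Delta,\lambda)$ and $(x,\theta,\lambda)$, with $h$ proportional to $\Delta$, using the same pair of stopping times on both sides. The structure that makes this work is the explicit representation \eqref{eq:explicit_representation}, which gives $X^{x+h}_t = X^x_t + h e^{-\delta t}$, together with the linearity of $l_x$ in \eqref{eq:instant_cost_der}. Together they give, pathwise,
\[
\big(l_x(X^{x+h}_t,\theta+\Delta)-\lambda\big)-\big(l_x(X^{x}_t,\theta)-\lambda\big) = (\phi+\rho)h e^{-\delta t} - \phi\Delta .
\]
The boundary terms $K_-e^{-\delta\tau}\ind_{\tau<\eta}$ and $-K_+e^{-\delta\eta}\ind_{\eta<\tau}$ do not depend on $(x,\theta,\lambda)$ once $(\tau,\eta)$ is fixed. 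Since $W$ is common to both processes, I take the stopping times to be stopping times of $\bbF$, independent of the starting point. Hence the difference $M^{\theta+\Delta,\lambda}_{x+h}(\tau,\eta)-M^{\theta,\lambda}_{x}(\tau,\eta)$ equals $\E\big[\int_0^{\tau\wedge\eta}e^{-\delta t}\big((\phi+\rho)he^{-\delta t}-\phi\Delta\big)dt\big]$.

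For item (1), I use Lemma \ref{lemma:U_monotonicity}(2). As $\theta$ or $\lambda$ increases, $U$ decreases pointwise. So $\{U\ge K_-\}$ shrinks, which means $a_-$ increases, and $\{U\le -K_+\}$ grows, which means $a_+$ increases.

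For item (2), take $\Delta>0$ and split into the two one-sided bounds.

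\emph{Lower bound.} Choose $h=m\Delta$. The integrand becomes $\phi\Delta(e^{-\delta t}-1)e^{-\delta t}\le 0$, so $M$ decreases for every $(\tau,\eta)$. Taking $\inf\sup$ gives $U(x+m\Delta,\theta+\Delta,\lambda)\le U(x,\theta,\lambda)$. Evaluating at $x$ slightly below $a_-(\theta,\lambda)$, where $U<K_-$, gives $a_-(\theta+\Delta,\lambda)\ge a_-(\theta,\lambda)+m\Delta$. The same argument works for $a_+$, using the points where $U\le -K_+$.

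\emph{Upper bound.} Choose $h=2m\Delta$. The integrand is $\phi\Delta(2e^{-2\delta t}-e^{-\delta t})$, which is not sign-definite. This is the step I expected to be the obstacle, and the resolution is to integrate up to an arbitrary horizon $\sigma=\tau\wedge\eta$:
\[
\int_0^\sigma \big(2e^{-2\delta t}-e^{-\delta t}\big)dt=\frac{e^{-\delta\sigma}-e^{-2\delta\sigma}}{\delta}\ \ge 0 .
\]
This holds pathwise for every $\sigma\in[0,\infty]$. Hence $M$ increases for every pair $(\tau,\eta)$, and so $U(x+2m\Delta,\theta+\Delta,\lambda)\ge U(x,\theta,\lambda)$. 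This yields $a_\pm(\theta+\Delta,\lambda)\le a_\pm(\theta,\lambda)+2m\Delta$, which gives \eqref{eq:bilipschitz:theta}.

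For $\lambda$, the integrand difference is $(\phi+\rho)he^{-\delta t}-\Delta$. Choosing $h=c\Delta$ and $h=2c\Delta$ reproduces exactly the same two computations, which gives \eqref{eq:bilipschitz:lambda}.

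For item (3), I would apply the triangle inequality through the intermediate point $(\theta',\lambda)$, using the upper bounds from item (2). For item (4), Rademacher's theorem applied to the jointly Lipschitz maps from item (3) gives differentiability almost everywhere. The bounds on the partial derivatives then follow by passing to the limit in the difference quotients, using the monotonicity from item (1) and the two-sided estimates from item (2).

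The main care points are two. First, the $\inf\sup$ comparison must be done for fixed pairs $(\tau,\eta)$ that are admissible for both starting points; this is why the stopping times are taken with respect to $\bbF$. Second, the passage from the inequality on $U$ to the inequality on the thresholds must use the definitions \eqref{eq:barriers} together with the continuity of $U$ in $x$.
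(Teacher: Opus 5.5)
Your proof is correct, but it takes a different route from the paper's.

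\textbf{The paper's route.} The paper first shows that $U\in\dC^1$ on the open set $\{-K_+<U<K_-\}$, with the saddle-point representations \eqref{eq:value_dynkin:derivatives} of $U_x$, $U_\theta$, $U_\lambda$. It then approximates $a_+$ by the level sets $a^\epsilon(\theta,\lambda)=\sup\{x:\,U(x,\theta,\lambda)\le -K_++\epsilon\}$, which lie inside the region where $U$ is $\dC^1$. The implicit function theorem gives $a^\epsilon_\theta=-U_\theta/U_x$, and this ratio is bounded in $[m,2m]$ by comparing $\int_0^{\tau^\star\wedge\eta^\star}e^{-\delta t}dt$ with $\int_0^{\tau^\star\wedge\eta^\star}e^{-2\delta t}dt$. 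Finally it lets $\epsilon\downarrow0$ via Ascoli--Arzel\`a on compact intervals, and does the same in $\lambda$.

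\textbf{Your route.} You couple the games started at $(x,\theta,\lambda)$ and $(x+h,\theta+\Delta,\lambda)$ through identical $\bbF$-stopping times. Because the flow is additive in the initial point and $l_x$ is affine, the payoff difference for each fixed pair $(\tau,\eta)$ depends only on $\tau\wedge\eta$, and its sign is fixed:
\begin{itemize}
\item for $h=m\Delta$ (resp.\ $c\Delta$) it is nonpositive pointwise in $t$;
\item for $h=2m\Delta$ (resp.\ $2c\Delta$) it is nonnegative after integration, by $\int_0^\sigma(2e^{-2\delta t}-e^{-\delta t})dt=(e^{-\delta\sigma}-e^{-2\delta\sigma})/\delta\ge0$.
\end{itemize}
The $\inf\sup$ comparison, together with the level-set description of $a_\pm$ and continuity of $U$ in $x$, then gives the global increment bounds directly. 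I checked the threshold transfers for both $a_+$ and $a_-$ and both inequality directions, and they are right. This is the same elementary inequality the paper uses, but applied to payoffs rather than to derivatives of $U$.

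\textbf{What each approach buys.} Yours is shorter and gives monotonicity for free. It also avoids the steps the paper only sketches: convergence of saddle points as the data vary, joint continuity of the partial derivatives, and the compactness limit $\epsilon\downarrow0$ followed by extension from compact intervals to $\R$. The paper's route, in exchange, produces the representation of $U_x$ in \eqref{eq:value_dynkin:derivatives}, which is reused in Lemma \ref{lemma:Dynkin_game:solution} to prove smooth fit. Its method also still works when the dynamics are not additive in the initial condition or the marginal cost is not affine; there your exact coupling identity is unavailable.
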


\begin{proof}
We start with proving the monotonicity of $a_\pm$. Let $\theta_1 > \theta_2$. 
Since $\theta \mapsto U(x,\theta,\lambda)$ is non-increasing, we have
\[
\{ x \in \R: U(x,\theta_2,\lambda) \leq -K_+ \} \subseteq \{ x \in \R: U(x,\theta_1,\lambda) \leq -K_+ \}.
\]
By taking the supremum of these sets and relying on the definition \eqref{eq:barriers} of $a_+(\theta,\lambda)$, one gets $a_+(\theta_1,\lambda) \geq a_+(\theta_2,\lambda)$.
The monotonicity of the other maps can be proved analogously.

\smallskip
Next, we define the set
\[
\cC = \{ (x,\theta,\lambda) \in \R \times \R^2: -K_+ < U(x,\theta,\lambda) < K_- \}
\]
and notice that, by Lemma \ref{lemma:U_monotonicity}, $\cC$ is an open set.
We show that $U \in \dC^1(\cC)$ and that it holds
\begin{equation}\label{eq:value_dynkin:derivatives}
\begin{aligned}
    & U_x(x,\theta,\lambda) = (\rho+\phi)\E\left[\int_0^{\tau^\star \land \eta^\star}e^{-2 \delta t} dt \right], \\
    &  U_\theta(x,\theta,\lambda) = -\phi\E\left[\int_0^{\tau^\star \land \eta^\star}e^{- \delta t} dt \right], \quad U_\lambda(x,\theta,\lambda) = -\E\left[\int_0^{\tau^\star \land \eta^\star}e^{-\delta t} dt \right]
\end{aligned}
\end{equation}
for any $(x,\theta,\lambda) \in \cC$, with $(\tau^\star,\eta^\star)$ given by \eqref{eq:dynkin:saddle_point}.
To see this, we first fix $(\theta,\lambda) \in \R^2$ and consider $ x \in \cC^{\theta,\lambda}$.
As $x \mapsto U(x,\theta,\lambda)$ is non-decreasing and continuous for any $(\theta,\lambda) \in \R^2$ fixed, there exists $\epsilon_0 > 0$ so that $x+\epsilon \in \cC^{\theta,\lambda}$ for any $0 \leq \epsilon \leq \epsilon_0$.
Denote by $(\tau^\star_{x},\eta^\star_{x})$ the saddle-point for the Dynkin game with initial value $x$, and by $(\tau^\star_{x+\epsilon},\eta^\star_{x+\epsilon})$ the saddle-point for the Dynkin game with initial value $x+\epsilon$.
As the Dynkin game has a value, it holds
\begin{equation}\label{eq:lemma_regularity:U_bounds}
\begin{aligned}
    M^{\theta,\lambda}_{x}(\tau^\star_{x},\eta^\star_{x+\epsilon}) \leq U(x,\theta,\lambda) \leq M^{\theta,\lambda}_{x}(\tau^\star_{x+\epsilon},\eta^\star_{x}), \\
    M^{\theta,\lambda}_{x+\epsilon}(\tau^\star_{x+\epsilon},\eta^\star_{x}) \leq U(x+\epsilon,\theta,\lambda) \leq M^{\theta,\lambda}_{x+\epsilon}(\tau^\star_{x},\eta^\star_{x+\epsilon}).
\end{aligned}
\end{equation}
By \eqref{eq:lemma_regularity:U_bounds}, we deduce
\begin{multline}\label{eq:U_epsilon_bound_above_x}
    \frac{1}{\epsilon}\left( U(x+\epsilon,\theta,\lambda) - U(x,\theta,\lambda) \right) \leq \frac{1}{\epsilon}\E\left[ \int_0^{\tau^\star_{x} \land \eta^\star_{x+\epsilon}} e^{-\delta t } \big(l_x(X^{x+\epsilon}_t,\theta) - l_x(X^{x}_t, \theta) \big) dt \right] \\
    = (\rho+\phi)\E\left[ \int_0^{\tau^\star_{x} \land \eta^\star_{x+\epsilon}} e^{-2\delta t} dt \right]
\end{multline}
and
\begin{multline}\label{eq:U_epsilon_bound_below_x}
    \frac{1}{\epsilon}\left( U(x+\epsilon,\theta,\lambda) - U(x,\theta,\lambda) \right) \geq \frac{1}{\epsilon}\E\left[ \int_0^{\tau^\star_{x+\epsilon} \land \eta^\star_{x}} e^{-\delta t} \big(l_x(X^{x+\epsilon}_t,\theta) - l_x(X^{x}_t,\theta) \big) dt \right] \\
    = (\rho+\phi) \E\left[ \int_0^{\tau^\star_{x+\epsilon} \land \eta^\star_{x}} e^{-2\delta t} dt \right]
\end{multline}
As $(\tau^\star_{x+\epsilon},\eta^\star_{x+\epsilon}) \to (\tau^\star_{x},\eta^\star_{x})$ as $\epsilon \to 0$, we conclude that the $U_x$ is given by \eqref{eq:value_dynkin:derivatives}, for any $(x,\theta,\lambda) \in \cC$.
As for the derivative with respect to $\theta$, fix $\lambda \in \R$, consider $\theta \in \R$ and $ x \in \cC^{\theta,\lambda}$.
As $\theta \mapsto U(x,\theta,\lambda)$ is non-increasing and continuous, there exists $\epsilon_0 > 0$ so that $ x \in \cC^{\theta - \epsilon,\lambda}$ for any $0 \leq \epsilon  \leq \epsilon_0$.
Let now $(\tau^\star_{\theta},\eta^\star_{\theta})$ be the saddle-point for the Dynkin game with value $U(x,\theta,\lambda)$, and let $(\tau^\star_{\theta-\epsilon},\eta^\star_{\theta-\epsilon})$ be the saddle-point for the Dynkin game with value $U(x,\theta-\epsilon,\lambda)$.
As the Dynkin game has a value, it holds
\begin{equation}\label{eq:lemma_regularity:U_bounds_theta}
\begin{aligned}
    M^{\theta,\lambda}_{x}(\tau^\star_{\theta},\eta^\star_{\theta-\epsilon}) \leq U(x,\theta,\lambda) \leq M^{\theta,\lambda}_{x}(\tau^\star_{\theta-\epsilon},\eta^\star_{\theta}), \\
    M^{\theta-\epsilon,\lambda}_{x}(\tau^\star_{\theta-\epsilon},\eta^\star_{\theta}) \leq U(x,\theta-\epsilon,\lambda) \leq M^{\theta-\epsilon,\lambda}_{x}(\tau^\star_{\theta},\eta^\star_{\theta-\epsilon}).
\end{aligned}
\end{equation}
By the same reasoning as above, we get the bounds
\begin{equation*}
    -\phi \E\left[ \int_0^{\tau^\star_{\theta} \land \eta^\star_{\theta -\epsilon}} e^{-\delta t} dt \right] \leq \frac{ U(x,\theta,\lambda) - U(x,\theta-\epsilon,\lambda) }{\epsilon} \leq -\phi \E\left[ \int_0^{\tau^\star_{\theta-\epsilon} \land \eta^\star_{\theta}} e^{-\delta t} dt \right]
\end{equation*}
By taking the limit as $\epsilon \to 0$, we get to the expression in \eqref{eq:value_dynkin:derivatives}.
The representation of the derivative with respect to $\lambda$ can be proven analogously.
By \eqref{eq:value_dynkin:derivatives}, the partial derivatives $U_x$, $U_\theta$ and $U_\lambda$ are jointly continuous in $(x,\theta,\lambda)$, which implies that $U \in \dC^1(\cC)$.

\smallskip
We now deal with the Lipschitzianity of $a_+(\theta,\lambda)$, by reasoning as in \cite{deangelis_stabile2019lipschitz}.
Fix $\lambda \in \R$.
For $\epsilon \in (0,1]$, set
\begin{equation*}
    a^{\epsilon}(\theta,\lambda) \coloneqq \sup \{ x \in \R: \, U(x,\theta,\lambda) \leq -K_+ + \epsilon \}.
\end{equation*}
We notice that $a^{\epsilon}(\theta,\lambda)$ converges to $a_+(\theta,\lambda)$ pointwise as $\epsilon \downarrow 0$.
Indeed, as $x \mapsto U(x,\theta,\lambda)$ is non-decreasing for any fixed $(\theta,\lambda)$, we have $a^{\epsilon}(\theta,\lambda) \in \cC^{\theta,\lambda}$, so that $a^{\epsilon}(\theta,\lambda) \geq a_+(\theta,\lambda)$.
Moreover, by definition, $(a^{\epsilon}(\theta,\lambda))_{ \epsilon \in (0,1] }$ is non-decreasing in $\epsilon$. This implies $\lim_{\epsilon \downarrow 0} a^{\epsilon}(\theta,\lambda) \geq a(\theta,\lambda)$.
By continuity of $U(x,\theta,\lambda)$ in $x$, we have $U(\lim_{\epsilon \downarrow 0} a^{\epsilon}(\theta,\lambda),\theta,\lambda) = \lim_{\epsilon \downarrow 0} U(a^{\epsilon}(\theta,\lambda),\theta,\lambda) = \lim_{\epsilon \downarrow 0} (-K_+ +\epsilon) = -K_+$, which implies that $\lim_{\epsilon \downarrow 0} a^{\epsilon}(\theta,\lambda)$ belongs to $\cS^{\theta,\lambda}_+$ and thus $\lim_{\epsilon \downarrow 0} a^{\epsilon}(\theta,\lambda) \leq a_+(\theta,\lambda)$.

For any $\epsilon \in (0,1]$, as $(a^{\epsilon}(\theta,\lambda),\theta,\lambda) \in \cC$ and $U \in \dC^1(\cC )$, the implicit function theorem and \eqref{eq:value_dynkin:derivatives} imply
\begin{equation}\label{eq:lemma:regularity_boundary:derivative_a_epsilon}
\begin{aligned}
    a^\epsilon_\theta(\theta,\lambda) = -\frac{ U_\theta(a^{\epsilon}(\theta,\lambda),\theta,\lambda) }{U_x(a^{\epsilon}(\theta,\lambda),\theta,\lambda)} = \frac{\phi}{\rho+\phi} \frac{\E\left[\int_0^{\tau^\star \land \eta^\star}e^{- \delta t} dt \right]}{\E\left[\int_0^{\tau^\star \land \eta^\star}e^{-2 \delta t} dt \right]}, \quad \forall \, (\theta,\lambda) \in \R^2.
\end{aligned}
\end{equation}
By relying on the inequality $1-e^{-x} \leq 1 - e^{-2x}$, we find
\begin{align*}
    \frac{\phi}{\phi + \rho} \leq a^{\epsilon}_\theta(\theta,\lambda) = \frac{\phi}{\phi + \rho} \frac{1-\E[e^{-\delta \tau^\star\land\eta^\star}]}{\delta} \frac{2\delta}{1 - \E[e^{-2\delta \tau^\star\land\eta^\star}]} \leq \frac{2\phi}{\phi + \rho}.
\end{align*}
Next, fix $\theta \in \R$ and let $\theta_1$, $\theta_2$ so that $ \theta_1 < \theta < \theta_2$.
By monotonicity of $a^\epsilon$ for any $\epsilon > 0$, we have $-\infty < a_+(\theta_2,\lambda) \leq a^{\epsilon}(\theta,\lambda) \leq a_+(\theta_1,\lambda) < +\infty$ for any $\theta$ in the compact interval $[\theta_1,\theta_2]$, $\lambda \in \R$.
Thus, $(a^\epsilon(\cdot,\lambda))_{\epsilon \in (0,1]}$ is a sequence of equicontinuous uniformly bounded functions defined on the compact $[\theta_1,\theta_2]$.
Ascoli-Arzelà's theorem then implies that, up to a subsequence, $a^\epsilon$ converges uniformly to $a_+(\theta,\lambda)$ on $[\theta_1,\theta_2]$.
This allows us to deduce that $a_+(\theta,\lambda)$ is Lipschitz continuous on $[\theta_1,\theta_2]$ with constant independent of the interval.
Moreover, as $a^\epsilon_\theta(\theta,\lambda)$ always belong to the interval $[\frac{\phi}{\rho+\phi},2\frac{\phi}{\rho+\phi}]$, we can conclude that $a_+(\theta,\lambda)$ satisfies \eqref{eq:bilipschitz:theta} for any $\theta,\theta'$ in the interval $[\theta_1,\theta_2]$, $\lambda \in \R$.
As the interval is arbitrary, this implies that $a_+(\theta,\lambda)$ satisfies property \eqref{eq:bilipschitz:theta} for any $\theta, \theta' \in \R$, $\lambda \in \R$.
Fix now $\theta \in \R$.
By applying the same reasoning to $\lambda \mapsto a_+(\theta,\lambda)$, we deduce that $a_+(\theta,\lambda)$ is bi-Lipschitz with respect to $\lambda$ as well, i.e. \eqref{eq:bilipschitz:lambda}.
By putting together \eqref{eq:bilipschitz:theta} and \eqref{eq:bilipschitz:lambda}, we get to \eqref{eq:barriers:lipschitz}.
The Lipschitzianity of $a_-$ can be proved analogously.

\smallskip
Finally, since the maps $(\theta,\lambda) \mapsto a_\pm(\theta,\lambda)$ are Lipschitz, they are a.e. differentiable by Rademacher's theorem. The bounds on the partial derivatives follow from the monotonicity of $\theta \mapsto a_\pm(\theta,\lambda)$ and $\lambda \mapsto a_\pm(\theta,\lambda)$ together with the bi-Lipschitz properties \eqref{eq:bilipschitz:theta} and \eqref{eq:bilipschitz:lambda}.
\end{proof}

We recall that the infinitesimal generator $\cL_{X^0}$ of the diffusion $X^0$ is given by
\[
\cL_{X^0}f(x) = -\delta x f_{x}(x) + \frac{1}{2}\sigma^2 f_{xx}(x),
\]
for any $f \in \dC^2_b(\R)$.
The following Lemma shows that the Dynkin game value $U(x,\theta,\lambda)$ solves a free-boundary problem.
\begin{lemma}\label{lemma:Dynkin_game:solution}
For any $(\theta,\lambda) \in \R^2$, one has $U(\cdot,\theta,\lambda) \in \dC^1(\R) \cap \dC^2(\R \setminus \{ a_+(\theta,\lambda),a_-(\theta,\lambda) \})$.
Moreover, $U(\cdot,\theta,\lambda)$ solves the equation
\begin{equation}\label{eq:free_boundary_problem}
    \left\{ \begin{aligned}
        -\delta U(x,\theta,\lambda) + \cL_{X^0}U(x,\theta,\lambda) + l_x(x,\theta) - \lambda = 0, \quad & a_+(\theta,\lambda) < x < a_-(\theta,\lambda), \\
        U(x,\theta,\lambda) = -K_+, \quad & x \leq a_+(\theta,\lambda), \\
        U(x,\theta,\lambda) = K_-, \quad & x \geq a_-(\theta,\lambda).
    \end{aligned} \right.
\end{equation}
Finally, it holds
\begin{align}
        -\delta U(x,\theta,\lambda) + \cL_{X^0}U(x,\theta,\lambda) + l_x(x,\theta) - \lambda \geq 0, \quad &  x \geq a_+(\theta,\lambda),  \label{eq:U_signs:positive} \\
        -\delta U(x,\theta,\lambda) + \cL_{X^0}U(x,\theta,\lambda) + l_x(x,\theta) - \lambda \leq 0, \quad &  x \leq a_-(\theta,\lambda). \label{eq:U_signs:negative}
\end{align}
\end{lemma}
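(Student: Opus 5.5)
We split the proof into the interior equation, the regularity across the free boundaries, and the two variational inequalities.

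\emph{Interior equation.} Fix $(\theta,\lambda)$ and write $a_\pm = a_\pm(\theta,\lambda)$. By Lemma \ref{lemma:theta:boundary_separeted}, $-\infty<a_+<a_-<\infty$. By Lemma \ref{ctrl:lemma:saddle_point} and \eqref{eq:filtered:regions_U}, for $x\in(a_+,a_-)$ we have
\[
U(x,\theta,\lambda)=\E\Big[\int_0^{\gamma}e^{-\delta t}(l_x(X^x_t,\theta)-\lambda)dt + K_-e^{-\delta\gamma}\ind_{X^x_\gamma=a_-}-K_+e^{-\delta\gamma}\ind_{X^x_\gamma=a_+}\Big],
\]
where $\gamma$ is the exit time of $X^x$ from $(a_+,a_-)$. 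This is the probabilistic representation of the Dirichlet problem
\[
-\delta u+\cL_{X^0}u+l_x(\cdot,\theta)-\lambda=0 \text{ on } (a_+,a_-), \qquad u(a_+)=-K_+,\quad u(a_-)=K_-.
\]
The operator is uniformly elliptic on a bounded interval with smooth coefficients, so this problem has a unique classical solution $u\in \dC^2((a_+,a_-))\cap \dC([a_+,a_-])$. Itô's formula up to $\gamma$, together with $\E[\gamma]<\infty$, shows $u=U$ there. Explicitly, $u$ is a particular affine solution plus a combination of the two fundamental solutions of $\frac{1}{2}\sigma^2 f''-\delta x f'-\delta f=0$. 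Outside the interval, $U\equiv -K_+$ or $K_-$ by definition of $\cS^{\theta,\lambda}_\pm$, and this gives \eqref{eq:free_boundary_problem}. The $\dC^2$ regularity away from $a_\pm$ follows.

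\emph{Smooth fit.} This is the main obstacle. We need $U_x(a_\pm\mp 0)=0$, i.e. the one-sided derivative from the continuation side vanishes. I would use the representation \eqref{eq:value_dynkin:derivatives}, $U_x=(\rho+\phi)\E\big[\int_0^{\tau^\star\land\eta^\star}e^{-2\delta t}dt\big]$, valid in $\cC^{\theta,\lambda}$. As $x\downarrow a_+$, the hitting time $\eta^\star_x\to 0$ in probability, because the OU process started near a point regular for $(-\infty,a_+]$ hits it immediately (law of the iterated logarithm / regularity of boundary points for nondegenerate diffusions). Hence $U_x(x)\to 0$. Since $u$ is $\dC^1$ up to the boundary (the explicit ODE solution is analytic), the one-sided derivative equals this limit, which is $0$, matching the constant side. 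The same argument applies at $a_-$, giving $U\in \dC^1(\R)$. As an alternative check, the sub/supermartingale bounds from the proof of Lemma \ref{lemma:theta:boundary_separeted} give $-K_+\le U\le K_-$. With $U$ monotone, this forces $U_x(a_+ +)\ge 0$, and a standard contradiction argument with the super-martingale at $a_+$ excludes a strictly positive slope.

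\emph{Variational inequalities.} On $(a_+,a_-)$ the expression in \eqref{eq:U_signs:positive}--\eqref{eq:U_signs:negative} vanishes. For $x\ge a_-$ we have $U=K_-$, so the expression equals $-\delta K_-+l_x(x,\theta)-\lambda$. This is $\ge 0$ since $x\ge a_-\ge x_-(\theta,\lambda)$, by \eqref{eq:barriers:necessarily_separated} and \eqref{eq:unique_zeros}; it is also the only part of $x\ge a_+$ outside the continuation region, so \eqref{eq:U_signs:positive} holds. Similarly, for $x\le a_+$ the expression equals $\delta K_++l_x(x,\theta)-\lambda\le 0$, since $x\le a_+\le x_+$; this gives \eqref{eq:U_signs:negative}. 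At the points $a_\pm$ themselves, where $U_{xx}$ may not exist, the inequalities are read in the one-sided or a.e. sense.
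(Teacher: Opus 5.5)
Your proof is correct. For the smooth-fit step and the two variational inequalities it matches the paper: the paper also gets $U_x(x_n,\theta,\lambda)\to 0$ as $x_n\to a_\pm(\theta,\lambda)$ from \eqref{eq:value_dynkin:derivatives}, and it also reduces \eqref{eq:U_signs:positive}--\eqref{eq:U_signs:negative} on the stopping regions to \eqref{eq:barriers:necessarily_separated}.

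You differ genuinely in how you obtain the interior equation.
\begin{itemize}
\item The paper argues abstractly. The flow $x\mapsto X^x_t$ is a diffeomorphism, so the hypothesis of \cite[Corollary 5]{peskir2025weak} holds. That yields the equation on $\cC^{\theta,\lambda}$ in the distributional sense, which elliptic regularity upgrades to a classical $\dC^2$ solution.
\item You use the saddle point of Lemma \ref{ctrl:lemma:saddle_point} and the interval structure \eqref{eq:filtered:regions_U} to write $U$ on $(a_+,a_-)$ as a discounted exit-time functional. You then solve the two-point boundary value problem classically (uniquely solvable by the maximum principle, since $-\delta<0$) and identify the two via It\^o's formula.
\end{itemize}
Your route is more elementary and self-contained, and it exploits the one-dimensional structure. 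It also directly produces the explicit form (affine particular solution plus fundamental solutions) that the paper only introduces in Section \ref{sec:numerics}. The paper's route needs neither the interval shape of the continuation region nor an exit-time representation, so it would carry over where that structure is unavailable, at the cost of invoking a recent external result.

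Two details favour your version:
\begin{itemize}
\item In the smooth-fit step you correctly use only that the hitting time of the nearby boundary vanishes, so $\tau^\star\wedge\eta^\star\to 0$. The paper's text asserts that both $\tau^\star$ and $\eta^\star$ tend to zero, which is not what happens.
\item You say what the inequalities mean at $x=a_\pm$, where $U_{xx}$ need not exist. The paper passes over this.
\end{itemize}

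Two small corrections. Your ``alternative check'' by contradiction is too vague to stand alone, though it is not needed. And the bounds $-K_+\le U\le K_-$ come from the option of immediate stopping, not from the sub/supermartingale property.
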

\begin{proof}
$U(\cdot,\theta,\lambda)$ belongs to $\dC^\infty$ in the interior of $\cS^{\theta,\lambda}_\pm$ as it is constant therein.
As for the behavior in $\cC^{\theta,\lambda}$, we recall that the flow $x \mapsto X^{x}_t$ is a diffeomorphism for every $t > 0$, by \cite[Theorem 13.8]{rogerswilliams_vol2}.
Then, for any $f \in \dC^\infty_c(\R)$ and $t > 0$, the map $x \mapsto \E[f(X^{x}_t)]$ is $\dC^2_b(\cO)$.
Therefore, Assumption (3.5) in \cite{peskir2025weak} is satisfied, which ensures by Corollary 5 therein that $U(\cdot,\theta,\lambda)$ satisfies
\begin{equation}\label{eq:filtered:unconstrained_PDE}
- \delta U(x,\theta,\lambda) + \cL_{X^0}U(x,\theta,\lambda)+ l_x(x,\theta) -\lambda = 0
\end{equation}
inside $\cC^{\theta,\lambda}$, where the derivatives appearing in \eqref{eq:filtered:unconstrained_PDE} are to be understood in the sense of Schwartz distribution.
Then, it is enough to notice that the generator $\cL_{X^0}$ is uniformly elliptic on $\R$ and $l_x(\cdot,\theta,\lambda) \in \dC^{\infty}(\R)$, which implies that $U(\cdot,\theta,\lambda) \in \dC^{2}(\cC^{\theta,\lambda})$ and that $U(\cdot,\theta,\lambda)$ satisfies \eqref{eq:filtered:unconstrained_PDE} in the classical sense in $\cC^{\theta,\lambda}$.
To conclude the $\dC^1$ regularity, we verify that $U(\cdot,\theta,\lambda)$ is $\dC^1$ at $a_\pm(\theta,\lambda)$.
Since $U(\cdot,\theta,\lambda)$ is $\dC^2$ over $\cC^{\theta,\lambda}$ and $U_x(x,\theta,\lambda) \equiv 0$ in the interior of $\cS^{\theta,\lambda}_\pm$, we check that $U_x(x_n,\theta,\lambda) \to 0$ for any $(x_n)_{n \geq 1} \subseteq \cC^{\theta,\lambda}$, $x_n \to a_\pm(\theta,\lambda)$.
This is straightforward from the representation of the derivative $U_x(x,\theta,\lambda)$ given by \eqref{eq:value_dynkin:derivatives}: indeed, it is enough to notice that
\[
\tau^\star(x_n,\theta,\lambda) \to 0, \quad \eta^\star(x_n,\theta,\lambda) \to 0
\]
for any $x_n \to a_+(\theta,\lambda)$ or $x_n \to a_-(\theta,\lambda)$.
Thus, $U(\cdot,\theta,\lambda) \in \dC^1(\R) \cap \dC^2(\R \setminus \{ a_+(\theta,\lambda),a_-(\theta,\lambda) \})$, and it satisfies \eqref{eq:free_boundary_problem}.
Finally, to see \eqref{eq:U_signs:positive}, suppose that $x \geq a_+(\theta,\lambda)$.
If $x < a_-(\theta,\lambda)$, then \eqref{eq:U_signs:positive} is actually an equality, since $U$ satisfies \eqref{eq:free_boundary_problem}.
If $x > a_-(\theta,\lambda)$, then $\cL_{X^0} U(x,\theta,\lambda) = 0$ and $U(x,\theta,\lambda) = K_-$, so that \eqref{eq:U_signs:positive} becomes $-\delta K_- + l_x(x,\theta) -\lambda$, which is non-negative by \eqref{eq:barriers:necessarily_separated}.
This proves \eqref{eq:U_signs:positive}.
\eqref{eq:U_signs:negative} is proven analogously.
\end{proof}

\begin{proposition}\label{prop:ergodic_ctrl:solution}
For any  $(\theta,\lambda) \in \R^2$, let $(X^{\xi^{\star}(\theta,\lambda)},\xi^{\star}(\theta,\lambda))$ be the solution of the Skorohod reflection problem 
\begin{equation}\label{eq:skorohod_reflection}
\left\{ \begin{aligned}
    & a_+(\theta,\lambda) \leq X^{\xi^{\star}(\theta,\lambda)}_t \leq a_-(\theta,\lambda), \quad \P_{x}\text{-a.s.,} \;\; \text{for almost all }t \geq 0,\\
    & \int_0^t \ind_{\{ X^{\xi^{\star}(\theta,\lambda)}_{s-} > a_+(\theta,\lambda) \}}d \xi^{\star,+}_s(\theta,\lambda) = \int_0^t \ind_{\{ X^{\xi^{\star}(\theta,\lambda)}_{s-} < a_-(\theta,\lambda) \}}d \xi^{\star,-}_s(\theta,\lambda) = 0, \quad \P_{x}\text{-a.s.,} \;\; \forall t \geq 0.
\end{aligned} \right.
\end{equation}
Then, $\xi^{\star}(\theta,\lambda) \in \cB$ is optimal, and it holds
\begin{multline}\label{eq:final_ctrl}
    \inf_{\xi \in \cB} \bfJ(\xi,\theta,\lambda) = \lim_{T \to \infty} \frac{1}{T}\E\left[ \int_0^T \big( l(X^{\xi^{\star}(\theta,\lambda)}_t,\theta) + \lambda (\theta - X^{\xi^{\star}(\theta,\lambda)}_t ) \big)dt + K_+ \xi^{\star,+}(\theta,\lambda) + K_- \xi^{\star,-}(\theta,\lambda)\right] = \\
    K_+\delta a_+(\theta,\lambda) +  l\big( a_+(\theta,\lambda),\theta) + \lambda ( \theta - a_+(\theta,\lambda)).
\end{multline}
\end{proposition}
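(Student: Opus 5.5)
The plan is to prove this by a verification argument for the ergodic HJB variational inequality. The candidate relative value function will be built by integrating the Dynkin game value $U$ from Lemma \ref{lemma:Dynkin_game:solution} in $x$.

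\textbf{Step 1: candidate value function.} Fix $(\theta,\lambda)$ and write $a_\pm=a_\pm(\theta,\lambda)$. Set
\[
v(x)\coloneqq\int_{a_+}^x U(y,\theta,\lambda)\,dy,
\qquad
\beta\coloneqq K_+\delta a_+ + l(a_+,\theta)+\lambda(\theta-a_+).
\]
By Lemma \ref{lemma:Dynkin_game:solution} we have $v\in\dC^2(\R)$ with $v'=U\in[-K_+,K_-]$ and $v''=U_x$. Moreover $v''$ is continuous and vanishes outside $(a_+,a_-)$, because $U_x$ tends to $0$ at the boundaries. Define
\[
G(x)\coloneqq -\delta x v'(x)+\tfrac12\sigma^2 v''(x)+l(x,\theta)+\lambda(\theta-x).
\]
Away from $a_\pm$ one computes $G'=-\delta U+\cL_{X^0}U+l_x-\lambda$. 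Hence $G'=0$ on $(a_+,a_-)$ by \eqref{eq:free_boundary_problem}. At $x=a_+$ we have $v'=-K_+$ and $v''=0$, which gives $G(a_+)=\beta$. So $G\equiv\beta$ on $[a_+,a_-]$.

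For $x<a_+$, \eqref{eq:U_signs:negative} gives $G'\le0$, so $G\ge\beta$ there. For $x>a_-$, \eqref{eq:U_signs:positive} gives $G'\ge0$, so $G\ge\beta$ there too. Combined with $-K_+\le v'\le K_-$, this yields the variational inequality
\[
\min\{G(x)-\beta,\;K_++v'(x),\;K_--v'(x)\}=0,\qquad x\in\R.
\]

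\textbf{Step 2: lower bound $\bfJ(\xi,\theta,\lambda)\ge\beta$ for every $\xi\in\cB$.} Apply Itô's formula for semimartingales to $v(X^\xi_t)$ on $[0,T]$. The absolutely continuous part is $(G-l-\lambda(\theta-\cdot))(X_t)\,dt$. The controlled part is $v'(X_{t-})\,d\xi^{\pm,c}$ plus jump terms. Each jump term satisfies
\[
v(X_s)-v(X_{s-})\ge -K_+\Delta\xi^+_s-K_-\Delta\xi^-_s,
\]
since $|v'|$ is bounded by $K_\pm$ on the respective sides. The same bound holds for the continuous control part. The stochastic integral $\int v'\sigma\,dW$ is a true martingale because $v'$ is bounded. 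Using $G\ge\beta$, this gives
\[
\E\Big[\int_0^T\big(l+\lambda(\theta-X_t)\big)dt+K_+\xi^+_T+K_-\xi^-_T\Big]\ge \beta T+\E[v(X^\xi_T)]-v(x).
\]

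The main obstacle is the transversality term $\E[v(X_T)]/T$. Since $|v(y)|\le C(1+|y|)$, we only need $\liminf_T \E|X_T|/T$ to be harmless. I would split into two cases.
\begin{itemize}
\item If $\varlimsup_T\frac1T\E\int_0^T|X_t|^2dt=\infty$, then $\bfJ(\xi,\theta,\lambda)=\infty\ge\beta$. This uses that $l+\lambda(\theta-x)\ge c x^2-C$ with $c>0$, because $\rho+\phi>0$, and that the control costs are nonnegative.
\item Otherwise, the time averages $\frac1T\E\int_0^T X_t^2\,dt$ are bounded by some $M$. Then for every $T$ there is $T'\in[T,2T]$ with $\E X_{T'}^2\le 2M$, obtained by averaging over $[T,2T]$. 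So along a sequence $T_n\to\infty$ we get $\E|X_{T_n}|/T_n\to0$. Note that $\varlimsup\ge\lim$ along this subsequence, and after dividing by $T_n$ and letting $n\to\infty$ this yields $\bfJ\ge\beta$.
\end{itemize}

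\textbf{Step 3: optimality of $\xi^\star(\theta,\lambda)$.} Existence of the Skorokhod reflection solution of \eqref{eq:skorohod_reflection} on $[a_+,a_-]$ is classical. For $\xi^\star$, all inequalities in Step 2 become equalities:
\begin{itemize}
\item $X$ stays in $[a_+,a_-]$, where $G=\beta$;
\item $\xi^{\star,+}$ increases only at $a_+$, where $v'=-K_+$, and $\xi^{\star,-}$ increases only at $a_-$, where $v'=K_-$;
\item only a possible initial jump occurs, at $t=0$, from $x$ to the interval, and there $v$ is linear with the right slope outside $[a_+,a_-]$.
\end{itemize}
Since $X^{\xi^\star}$ is bounded after time $0$, we get $\E[v(X_T)]/T\to0$. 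Therefore the limit (not just the $\varlimsup$) exists and equals $\beta$. This proves \eqref{eq:final_ctrl}.
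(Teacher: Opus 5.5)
Your proposal is correct and follows the paper's route. You use the same relative value function $v=\int_{a_+}^x U(y,\theta,\lambda)\,dy$ and constant $\kappa(\theta,\lambda)$, and derive the HJB variational inequality from the free-boundary problem together with \eqref{eq:U_signs:positive}--\eqref{eq:U_signs:negative}. The verification step you carry out by hand, including the transversality split and the existence of the limit for the reflected process, is the part the paper simply cites from \cite{jack2006singular} and \cite{alvarez2018stationary}.

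There is one harmless sign slip: It\^o's formula gives the lower bound $\beta T+v(x)-\E[v(X^\xi_T)]$, not $\beta T+\E[v(X^\xi_T)]-v(x)$. This changes nothing, since you only use $\E|v(X^\xi_{T_n})|/T_n\to 0$.
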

\begin{proof}
We connect the Dynkin game \eqref{eq:dynkin} with the optimization problem of $\bfJ(A,\theta,\lambda)$ \eqref{eq:mfg:payoff}.
To this end, we define the functions
\begin{equation}\label{eq:ctrl_problem:potential_value}
        v(x,\theta,\lambda) \coloneqq \int_{a_+(\theta,\lambda)}^x U(x',\theta,\lambda)dx', \qquad \kappa(\theta,\lambda) \coloneqq K_+\delta a_+(\theta,\lambda) +  l\big( a_+(\theta,\lambda),\theta)  + \lambda ( \theta - a_+(\theta,\lambda)).
\end{equation}
We have that $v(\cdot,\theta,\lambda) \in \dC^2(\R)$ and has at most linear growth, since $U(\cdot,\theta,\lambda) \in \dC^1(\R)$ and $-K_+ \leq U(x,\theta,\lambda) \leq K_-$.
Relying on \eqref{eq:free_boundary_problem}, \eqref{eq:U_signs:positive} and \eqref{eq:U_signs:negative}, a direct computation shows that the pair $(v(\cdot,\theta,\lambda)$, $\kappa(\theta,\lambda))$ solves the Hamilton-Jacobi-Bellman equation for the ergodic singular control problem, i.e.
\begin{equation}\label{eq:HJB}
\min\{\cL_{X^0} v(x,\theta,\lambda) + l(x,\theta) +\lambda( \theta - x) - \kappa(\theta,\lambda), -v_x(x,\theta,\lambda) + K_-, v_x(x,\theta,\lambda) + K_+\}=0, \quad  \forall \, x \in \R.
\end{equation}
Moreover, we notice that there exists a unique solution $(X^{\xi^{\star}(\theta,\lambda)},\xi^{\star}(\theta,\lambda))$ solution to the Skorohod reflection problem \eqref{eq:skorohod_reflection} (see, e.g., \cite[Theorem 4.1]{tanaka1979reflecting}).
Then, a standard verification theorem (see, e.g., \cite[Theorem 3.2]{jack2006singular}) yields
\[
\inf_{\xi \in \cB}\bfJ(\xi,\theta,\lambda) = \bfJ(\xi^\star(\theta,\lambda))=\kappa(\theta,\lambda),
\]
i.e. $\xi^\star(\theta,\lambda)$ is optimal and $\kappa(\theta,\lambda)$ is the value of the ergodic problem.
Finally, we notice that, by \cite[Lemma 2.1]{alvarez2018stationary}, the limsup in the cost criterion is actually a limit.
This concludes the proof.
\end{proof}

\subsection{Solution to the constrained optimization problem}\label{sec:constrained}

In this section, we solve the constrained optimization problem \eqref{eq:value_function:constrained}.

\begin{definition}\label{def:lambda_mfg}
Let $\lambda \in \R$. We say that a pair $(\xi^\star(\lambda),\theta^\star(\lambda))$ in $\cB \times \R$ is a solution of the $\lambda$-stationary MFG if the following two-properties hold:
\begin{enumerate}
    \item $\bfJ(\xi^\star(\lambda),\theta^\star(\lambda),\lambda) \leq \bfJ(\xi,\theta^\star(\lambda),\lambda)$ for any $\xi \in \cB$, where $\bfJ(\xi,\theta,\lambda)$ is given by \eqref{eq:mfg:payoff};
    \item The optimally controlled state process $X^{\xi^{\star}(\lambda)}$ admits a unique stationary distribution $p^{\lambda}_\infty \in \cP(\R)$ and
    \begin{equation}\label{lambda_mfg:cons}
        \theta^\star(\lambda) = \int_{\R} x \, p^{\lambda}_\infty(dx).
    \end{equation}
\end{enumerate}
\end{definition}
We show in Proposition \ref{prop:mfg:fixed_multiplier} that the $\lambda$-stationary MFG admits a solution for any $\lambda$ fixed.
Then, Proposition \ref{prop:mfg:fixed_mean} shows that, for any $\theta \in \R$ it is possible to choose $\lambda(\theta)$ so that the optimal control belongs to $\cB_\theta$.
Finally, in Proposition \ref{prop:solution_constrained_problem}, we conclude that the constrained optimization problem admits a solution, and we provide a useful representation of the value function $V(\theta)$ in terms of the unconstrained optimization problem, which proves crucial in concluding the proof of Theorem \ref{thm:necessary_condition}.

\begin{proposition}\label{prop:mfg:fixed_multiplier}
Fix $\lambda \in \R$. 
Then, there exists a solution $(\xi^\star(\lambda),\theta^\star(\lambda))$ of the $\lambda$-stationary MFG.
\end{proposition}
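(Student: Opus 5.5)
We fix $\lambda \in \R$ and look for a fixed point of the map $\theta \mapsto \Theta(\theta) \coloneqq \int_\R x \, p^{\theta,\lambda}_\infty(dx)$. Here $p^{\theta,\lambda}_\infty$ is the stationary distribution of the reflected Ornstein--Uhlenbeck process $X^{\xi^\star(\theta,\lambda)}$ obtained in Proposition \ref{prop:ergodic_ctrl:solution}.

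\emph{Step 1 (optimality and stationary distribution).} For each $\theta$, Proposition \ref{prop:ergodic_ctrl:solution} shows that $\xi^\star(\theta,\lambda)$ is optimal in $\cB$ for $\bfJ(\cdot,\theta,\lambda)$. The process $X^{\xi^\star(\theta,\lambda)}$ is an OU diffusion reflected on the compact interval $[a_+(\theta,\lambda),a_-(\theta,\lambda)]$, which is nondegenerate by Lemma \ref{lemma:theta:boundary_separeted}. Such a process has a unique stationary distribution, given by the normalized speed measure
\[
p^{\theta,\lambda}_\infty(dx) = \frac{e^{-\delta x^2/\sigma^2}\ind_{[a_+,a_-]}(x)}{\int_{a_+}^{a_-} e^{-\delta y^2/\sigma^2}dy}\,dx .
\]
The mean $\Theta(\theta) = F(a_+(\theta,\lambda),a_-(\theta,\lambda))$ is therefore an explicit smooth function $F(a,b)$ of the two barriers. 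Part (ii) of Definition \ref{def:lambda_mfg} then reduces to finding $\theta$ with $\Theta(\theta)=\theta$.

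\emph{Step 2 (properties of $F$).} We show that $F$ is continuous and satisfies $a \le F(a,b) \le b$, and that $\partial_a F, \partial_b F \ge 0$. Monotonicity holds because enlarging the truncation interval to the right, or shifting its left end to the right, increases the mean of a truncated Gaussian. The key quantitative claim is $\partial_a F + \partial_b F \le 1$: translating the interval by $h$ moves the mean of a truncated log-concave density by at most $h$. This is a known property of truncated log-concave laws, which we would check directly from the explicit formula; it amounts to the variance of the truncated Gaussian being at most that of the untruncated one, i.e.\ at most $\sigma^2/(2\delta)$.

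\emph{Step 3 (fixed point).} By Lemma \ref{lemma:theta:boundary_regularity}, $\theta \mapsto a_\pm(\theta,\lambda)$ is continuous and nondecreasing with slopes in $[m,2m]$, where $m=\phi/(\rho+\phi)$. Hence $\Theta$ is continuous and nondecreasing, and a.e.\ $\Theta'(\theta) \le 2m\,(\partial_aF+\partial_bF) \le 2m$. Assumption \ref{ass:phi_rho} gives $\phi<\rho$, so $2m<1$. It follows that $\theta \mapsto \Theta(\theta)-\theta$ is continuous and strictly decreasing with slope at most $2m-1<0$, hence tends to $\mp\infty$ as $\theta\to\pm\infty$. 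The intermediate value theorem then yields a (unique) $\theta^\star(\lambda)$ with $\Theta(\theta^\star(\lambda))=\theta^\star(\lambda)$. Setting $\xi^\star(\lambda)=\xi^\star(\theta^\star(\lambda),\lambda)$ gives a solution of the $\lambda$-stationary MFG.

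\emph{Main obstacle.} The delicate step is the bound $\partial_aF+\partial_bF\le 1$, i.e.\ that the mean of the stationary law moves no faster than a rigid translation of the barriers. This is what makes the factor $2m<1$, and hence Assumption \ref{ass:phi_rho}, sufficient. If the log-concavity argument gave trouble, we would instead use the cruder fact $a_+\le\Theta\le a_-$ together with $a_\pm(\theta,\lambda)-\theta \to \mp\infty$ along appropriate directions as $\theta\to\pm\infty$, which follows from slopes at most $2m<1$. This gives $\Theta(\theta)-\theta<0$ for large $\theta$ and $>0$ for very negative $\theta$, so existence still follows from continuity alone, without uniqueness.
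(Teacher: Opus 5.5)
Your argument is correct, and the fallback in your last paragraph is exactly the paper's proof. The paper uses only the sandwich $a_+(\theta,\lambda)\le G_\lambda(\theta)\le a_-(\theta,\lambda)$ together with the slope bound $\partial_\theta a_\pm\le 2m<1$. From these it finds $\theta_+<\theta_-$ at which $G_\lambda(\theta)-\theta$ changes sign, and concludes by the intermediate value theorem, which gives existence only.

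Your main route works with the explicit stationary law and shows that $\theta\mapsto\Theta(\theta)$ is Lipschitz with constant $2m<1$. This costs a short computation but gives more:
\begin{itemize}
\item uniqueness of the fixed point $\theta^\star(\lambda)$ of $\Theta$, which the paper does not claim;
\item convergence of the Picard iteration $\theta_{n+1}=\Theta(\theta_n)$, which is handy for the numerics of Section~\ref{sec:numerics}.
\end{itemize}
Passing from the a.e.\ bound $\Theta'\le 2m$ to the increment bound uses that $\Theta$ is locally Lipschitz. This holds because $F$ is smooth on $\{a<b\}$ and $a_\pm(\cdot,\lambda)$ are Lipschitz with $a_+<a_-$.

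One correction to your ``main obstacle'': you have the equivalence backwards. Write $\mu=F(a,b)$ and $Z=\int_a^b e^{-\delta y^2/\sigma^2}dy$, and let $v$ be the variance of the truncated law. An integration by parts of $\int_a^b (x-\mu)\,\frac{d}{dx}e^{-\delta x^2/\sigma^2}dx$ gives
\[
\partial_aF+\partial_bF=\frac{e^{-\delta a^2/\sigma^2}(\mu-a)+e^{-\delta b^2/\sigma^2}(b-\mu)}{Z}=1-\frac{2\delta}{\sigma^2}\,v .
\]
So the bound $\partial_aF+\partial_bF\le 1$ is simply $v\ge 0$ and needs no log-concavity input. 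The comparison $v\le \sigma^2/(2\delta)$ with the untruncated Gaussian is instead equivalent to $\partial_aF+\partial_bF\ge 0$, which you already have from the individual signs. For a general log-concave density $f$, the translation derivative equals $1+\mathrm{Cov}\big(Y,(\log f)'(Y)\big)$, and the bound $\le 1$ is Chebyshev's covariance inequality. The step you flagged as delicate is therefore immediate.
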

\begin{proof}
By Proposition \ref{prop:ergodic_ctrl:solution}, for any $(\theta,\lambda) \in \R^2$ fixed, there exists a control $\xi^{\star}(\theta,\lambda)$ solution to the ergodic singular control problem \eqref{eq:mfg:payoff}.
By \cite[Paragraph 36]{borodin2002handbook}, the diffusion $X^{\xi^{\star}(\theta,\lambda)}$ admits a unique stationary distribution $p^{\theta,\lambda}_\infty \in \cP(\R)$, given by
\begin{equation}\label{eq:stationary_distribution}
    p^{\theta,\lambda}_\infty(dx) = \frac{1}{\int_{a_+(\theta,\lambda)}^{a_-(\theta,\lambda)} p'(y) dy } 1_{[a_+(\theta,\lambda), a_-(\theta,\lambda)]}(x) p'(x)dx,
\end{equation}
where $p'(x)$ is the density of the speed measure associated to the Ornstein-Uhlenbeck process $X^0$,  given by
\begin{equation}\label{OU:speed_measure}
    p'(x) = \frac{2}{\sigma^2}e^{-\frac{\delta}{\sigma^2}x^2}.
\end{equation}
Define the map $G_\lambda:\R \to \R$ by setting
\begin{equation}\label{eq:fixed_lambda:fixed_point_map}
    G_\lambda(\theta) = \int_{\R} x \, p^{\theta,\lambda}_\infty(dx) =   \frac{1}{\int_{a_+(\theta,\lambda)}^{a_-(\theta,\lambda)} p'(y) dy } \int_{a_+(\theta,\lambda)}^{a_-(\theta,\lambda)}x p'(x)dx.
\end{equation}
We search for a fixed-point of $G_\lambda(\theta)$.
Exploiting $\P ( a_+(\theta,\lambda) \leq X^{\xi^{\star}(\theta,\lambda)}_t \leq a_-(\theta,\lambda) ) = 1$ for any $t \geq 0$, we get
\begin{equation}\label{eq:fixed_point:bounds}
    a_+(\theta,\lambda) \leq \varlimsup_{T \uparrow \infty} \frac{1}{T} \int_0^T \E[X^{\xi^{\star}(\theta,\lambda)}_t] dt = G_\lambda(\theta) \leq a_-(\theta,\lambda) .
\end{equation}
Since the map $\theta \mapsto G_\lambda(\theta) - \theta$ is continuous for any fixed $\lambda$, it is enough to show that there exist $\theta_-$ such that $G_\lambda(\theta_-) - \theta_- < 0$ and $\theta_+$ such that $G_\lambda(\theta_+) - \theta_+ > 0$, with $\theta_+ < \theta_-$.
By continuity, this implies the existence of $\theta^\star(\lambda) \in [\theta_+,\theta_-]$ such that $G_\lambda(\theta^\star(\lambda)) = \theta^\star(\lambda)$.
To do so, we exploit the bounds in \eqref{eq:fixed_point:bounds} and we show that there exist $\theta_-$ such that $a_-(\theta_-,\lambda) - \theta_- < 0$ and $\theta_+$ such that $a_+(\theta_+,\lambda) - \theta_+ > 0$.
Take $\theta < 0$. By Lemma \ref{lemma:theta:boundary_regularity}, since $\theta \mapsto a_+(\theta,\lambda)$ is non-decreasing and bi-Lipschitz, the map $\theta \mapsto a_+(\theta,\lambda)$ is a.e. differentiable and it holds $m \leq \partial_\theta a_+(\theta,\lambda) \leq 2m$.
Thus, for any $\theta < 0 $, we have
\begin{align*}
    a_+(\theta,\lambda) =  a_+(0,\lambda) + \int_0^\theta \partial_\theta a_+(\theta',\lambda) d\theta' = a_+(0,\lambda) - \int_\theta^0 \partial_\theta a_+(\theta',\lambda) d\theta' \geq a_+(0,\lambda) + 2m\theta.
\end{align*}
This implies
\[
a_+(\theta,\lambda) - \theta \geq (2m-1)\theta + a_+(0,\lambda) > 0.
\]
Since $\phi < \rho$ by Assumption \ref{ass:phi_rho}, we have $2m < 1$, the previous inequality is satisfied by $\theta_+ < \frac{a_+(0,\lambda)}{1 - 2m} \wedge 0$.
Analogously for $a_-(\theta,\lambda)$, take $\theta > 0$ and consider the upper bound
\begin{align*}
    a_-(\theta,\lambda) = a_-(0,\lambda) + \int_\theta^0 \partial_\theta a_-(\theta',\lambda) d\theta' \leq a_-(0,\lambda) + 2m\theta,
\end{align*}
so that $a_-(\theta,\lambda) - \theta < 0$ if $(1 - 2m)\theta > a_-(0,\lambda)$.
Since $2m < 1$, the previous inequality is satisfied by $\theta_- > \frac{a_-(0,\lambda)}{1 - 2m} \vee 0$.
\end{proof}

\begin{proposition}\label{prop:mfg:fixed_mean}
For any $\theta \in \R$, there exists a unique $\lambda(\theta)$ and a solution $(\xi^\star(\lambda),\theta^\star(\lambda))$ to the $\lambda$-stationary MFG such that $\theta^\star(\lambda(\theta)) = \theta$.
\end{proposition}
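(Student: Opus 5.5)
Fix $\theta \in \R$. The problem reduces to finding $\lambda$ such that $G_\lambda(\theta) = \theta$, where $G_\lambda$ is the map \eqref{eq:fixed_lambda:fixed_point_map}. Indeed, for such a $\lambda$, the control $\xi^\star(\theta,\lambda)$ of Proposition \ref{prop:ergodic_ctrl:solution} is optimal for $\bfJ(\cdot,\theta,\lambda)$, and its stationary mean equals $\theta$. Hence $(\xi^\star(\theta,\lambda),\theta)$ solves the $\lambda$-stationary MFG with $\theta^\star(\lambda)=\theta$. I will therefore study the map
\[
H_\theta(\lambda) \coloneqq \frac{\int_{a_+(\theta,\lambda)}^{a_-(\theta,\lambda)} x\, p'(x)\,dx}{\int_{a_+(\theta,\lambda)}^{a_-(\theta,\lambda)} p'(x)\,dx},
\]
which is the mean of the probability density proportional to $p'$ truncated to $[a_+(\theta,\lambda),a_-(\theta,\lambda)]$. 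The aim is to show that $H_\theta$ is a continuous, strictly increasing bijection of $\R$ onto $\R$.

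\textbf{Continuity and surjectivity.} By Lemma \ref{lemma:theta:boundary_regularity}, $\lambda\mapsto a_\pm(\theta,\lambda)$ is Lipschitz. By Lemma \ref{lemma:theta:boundary_separeted}, $a_+<a_-$, so the denominator of $H_\theta$ is strictly positive. Hence $H_\theta$ is continuous. The bi-Lipschitz lower bound \eqref{eq:bilipschitz:lambda} and the monotonicity give $a_+(\theta,\lambda)\ge a_+(\theta,0)+c\lambda$ for $\lambda>0$ and $a_-(\theta,\lambda)\le a_-(\theta,0)+c\lambda$ for $\lambda<0$. Combined with the bounds $a_+\le H_\theta\le a_-$, as in \eqref{eq:fixed_point:bounds}, this yields $H_\theta(\lambda)\to\pm\infty$ as $\lambda\to\pm\infty$. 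The intermediate value theorem then gives existence of $\lambda(\theta)$. Note that, unlike in Proposition \ref{prop:mfg:fixed_multiplier}, Assumption \ref{ass:phi_rho} is not needed here, because the lower Lipschitz constant $c$ is positive.

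\textbf{Uniqueness.} This is the main obstacle: I must show that $H_\theta$ is strictly increasing. I would first show that the mean $\mu(a,b)$ of the truncated density $p'\ind_{[a,b]}$ is strictly increasing in each endpoint. Differentiating,
\[
\partial_b\mu(a,b) = \frac{p'(b)\,(b-\mu)}{\int_a^b p'},\qquad \partial_a\mu(a,b) = \frac{p'(a)\,(\mu-a)}{\int_a^b p'},
\]
and both are strictly positive because $a<\mu<b$. Next, take $\lambda_1<\lambda_2$. By the strict lower bound in \eqref{eq:bilipschitz:lambda} together with monotonicity, $a_\pm(\theta,\lambda_2)\ge a_\pm(\theta,\lambda_1)+c(\lambda_2-\lambda_1)$, so both endpoints strictly increase. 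Therefore $H_\theta(\lambda_1)<H_\theta(\lambda_2)$, which gives injectivity and hence uniqueness of $\lambda(\theta)$.

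Finally, I would note that uniqueness of $\lambda(\theta)$ is meant among those $\lambda$ for which the associated optimal reflected control has mean $\theta$. Since the optimal barrier control is determined by $a_\pm(\theta,\lambda)$, this is exactly the injectivity of $H_\theta$ established above.
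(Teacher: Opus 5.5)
Your proposal is correct and follows essentially the paper's route. You use the same reduction to $F(\lambda)=G_\lambda(\theta)$, the same surjectivity argument (the sandwich $a_+\le F\le a_-$ plus the lower Lipschitz constant $c$), and strict monotonicity driven by the same endpoint terms $p'(b)(b-\mu)$ and $p'(a)(\mu-a)$ that make up the paper's formula for $F'(\lambda)$.

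Your differences from the paper are small improvements:
\begin{itemize}
\item You get strict monotonicity by composing strictly increasing endpoints with a truncated mean that increases in each endpoint. This avoids the paper's step from an a.e.\ positive derivative to monotonicity. When you do this, pass through the interval $[a_+(\theta,\lambda_1),a_-(\theta,\lambda_2)]$, which is nondegenerate; the other intermediate interval $[a_+(\theta,\lambda_2),a_-(\theta,\lambda_1)]$ might not be.
\item Your bound $a_-(\theta,\lambda)\le a_-(\theta,0)+c\lambda$ for $\lambda<0$ is the correct version of the paper's displayed estimate.
\end{itemize}
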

\begin{proof}
Fix $\theta \in \R$ and consider the map $\lambda \mapsto F(\lambda) \coloneqq G_\lambda(\theta)$, with $G_\lambda(\theta)$ defined by \eqref{eq:fixed_lambda:fixed_point_map}, which is continuous.
Since $\lambda \mapsto a_\pm(\theta,\lambda)$ are a.e. differentiable, so is $F(\lambda)$.
Recalling that $\partial_\lambda a_\pm(\theta,\lambda) \geq c$ a.e., a direct computation yields
\begin{multline*}
        F'(\lambda) =  \left(  \int_{a_+(\theta,\lambda)}^{a_-(\theta,\lambda)} p'(x) dx \right)^{-2} \Bigg( \partial_\lambda a_+(\theta,\lambda) p'(a_+(\theta,\lambda)) \int_{a_+(\theta,\lambda)}^{a_-(\theta,\lambda)} ( x - a_+(\theta,\lambda) )  p'(x) dx \\
    + \partial_\lambda a_-(\theta,\lambda) p'(a_-(\theta,\lambda)) \int_{a_+(\theta,\lambda)}^{a_-(\theta,\lambda)} ( a_-(\theta,\lambda) - x )  p'(x) dx \Bigg) > 0
\end{multline*}
This yields that $F(\lambda)$ is strictly increasing, and so injective.
By definition of $F(\lambda)$ and \eqref{eq:fixed_point:bounds}, we have
\[
a_+(\theta,\lambda) \leq F(\lambda) \leq a_-(\theta,\lambda).
\]
For any $\theta \in \R$ fixed, we have $\lim_{\lambda \to \pm \infty} a_\pm(\theta,\lambda) = \pm \infty$, which implies that $\lim_{\lambda \to \pm \infty} F(\lambda) = \pm \infty$ as well.
Indeed, for $\theta > 0$ we have
\[
a_+(\theta,\lambda) = a_+(\theta,0) + \int_0^\lambda \partial_\lambda a_+(\theta,\lambda')d\lambda' \geq a_+(\theta,0) + c\lambda
\]
which goes to $+\infty$ as $\lambda \to \infty$.
Analogously, take $\theta < 0$ and bound $a_-(\theta,\lambda)$ by
\[
a_-(\theta,\lambda) = a_-(\theta,0) + \int_0^\lambda \partial_\lambda a_-(\theta,\lambda')d\lambda' = a_-(\theta,0) - \int_\lambda^0 \partial_\lambda a_-(\theta,\lambda')d\lambda' \leq a_+(\theta,0) + 2c\lambda,
\]
which implies that, for fixed $\theta \in \R$, it holds $\lim_{\lambda \to -\infty} a_-(\theta,\lambda) = -\infty$.
Since $F$ is continuous, strictly increasing, and $\lim_{\lambda\to\pm\infty}F(\lambda)=\pm\infty$, it is a bijection from $\R$ onto $\R$.
Hence there exists a unique $\lambda(\theta)$ such that $F(\lambda(\theta))=\theta$.
This concludes the proof.
\end{proof}

\begin{proposition}\label{prop:solution_constrained_problem}
For any $\theta \in \R$, there exists a solution to the constrained optimization problem \eqref{eq:value_function:constrained}.
Moreover, it holds
\begin{equation}\label{eq:value_function:unconstrained}
    V(\theta) = \inf_{\xi \in \cB_\theta}\bfJ^{mfc}(\xi) = \inf_{ \xi \in \cB } \bfJ\big(\xi,\theta,\lambda(\theta)\big).
\end{equation}
\end{proposition}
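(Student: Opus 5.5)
Fix $\theta \in \R$. By Proposition \ref{prop:mfg:fixed_mean} there is a unique $\lambda(\theta)$ for which the $\lambda(\theta)$-stationary MFG has a solution $(\xi^\star(\lambda(\theta)),\theta^\star(\lambda(\theta)))$ with $\theta^\star(\lambda(\theta))=\theta$. By Definition \ref{def:lambda_mfg}, this control is optimal for $\bfJ(\cdot,\theta,\lambda(\theta))$ over all of $\cB$, so we may take it to be the reflection control $\hat\xi \coloneqq \xi^\star(\theta,\lambda(\theta))$ of Proposition \ref{prop:ergodic_ctrl:solution}. The plan is to show that $\hat\xi$ lies in $\cB_\theta$ and is optimal there, and then to read off \eqref{eq:value_function:unconstrained}.

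\textbf{Step 1: admissibility.} The process $X^{\hat\xi}$ is an Ornstein--Uhlenbeck process reflected at $a_\pm(\theta,\lambda(\theta))$, which are finite by Lemma \ref{lemma:theta:boundary_separeted}. Its unique stationary distribution is the compactly supported measure \eqref{eq:stationary_distribution}, so it has a finite first moment and $\hat\xi\in\cB_{mfc}$. The consistency condition \eqref{lambda_mfg:cons} gives $\theta^{\hat\xi}=G_{\lambda(\theta)}(\theta)=\theta$, so $\hat\xi\in\cB_\theta$.

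\textbf{Step 2: optimality in $\cB_\theta$.} For any $\xi\in\cB_\theta$, identity \eqref{eq:equality_costs} (with the multiplier $\lambda(\theta)$) gives $\bfJ^{mfc}(\xi)=\bfJ(\xi,\theta,\lambda(\theta))$. This rests on the preceding remark: ergodicity of $X^\xi$ turns the time average of $\lambda(\theta)(\theta-X^\xi_t)$ into $\lambda(\theta)(\theta-\theta^\xi)=0$, and a $\varlimsup$ plus a convergent sequence is the $\varlimsup$ of the sum. Using $\cB_\theta\subseteq\cB$ and the optimality of $\hat\xi$ in $\cB$, we get
\[
\bfJ^{mfc}(\xi)=\bfJ(\xi,\theta,\lambda(\theta))\ge \inf_{\xi'\in\cB}\bfJ(\xi',\theta,\lambda(\theta))=\bfJ(\hat\xi,\theta,\lambda(\theta))=\bfJ^{mfc}(\hat\xi),
\]
where the last equality is again \eqref{eq:equality_costs}, valid because $\hat\xi\in\cB_\theta$. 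Hence $\hat\xi$ attains $V(\theta)$. Moreover, the chain $\inf_{\cB}\bfJ(\cdot,\theta,\lambda(\theta))=\bfJ^{mfc}(\hat\xi)=V(\theta)$ is exactly \eqref{eq:value_function:unconstrained}.

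\textbf{Main obstacle.} The delicate point is justifying \eqref{eq:equality_costs} for a general $\xi\in\cB_\theta$. Here uniqueness of the stationary distribution with a finite first moment must give $\frac1T\E[\int_0^T X^\xi_t\,dt]\to\theta^\xi$, that is, convergence of the expected time averages to the stationary mean. I would interpret admissibility in Definition \ref{def:ergodic_strategies} as including this ergodic property, exactly as the remark following \eqref{eq:equality_costs} does. For $\hat\xi$ itself the property is clear: the process is bounded and the reflected diffusion on a compact interval is positively recurrent, so the ergodic theorem applies. Everything else is immediate from the earlier propositions.
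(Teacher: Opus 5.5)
Your proposal is correct and follows the paper's proof. The reflection control $\xi^\star(\theta,\lambda(\theta))$ is optimal in $\cB$ for $\bfJ(\cdot,\theta,\lambda(\theta))$ and lies in $\cB_\theta$ because $G_{\lambda(\theta)}(\theta)=\theta$, and \eqref{eq:equality_costs} transfers its optimality to $\bfJ^{mfc}$ on $\cB_\theta$, which gives \eqref{eq:value_function:unconstrained}. The caveat you flag, that \eqref{eq:equality_costs} requires reading admissibility as including convergence of expected time averages to $\theta^\xi$, is the same assumption the paper makes in its remark following \eqref{eq:equality_costs}.
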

\begin{proof}
For any $\theta \in \R$, by Propositions \ref{prop:mfg:fixed_multiplier} and \ref{prop:mfg:fixed_mean}, there exist a control $\xi^\star(\theta,\lambda(\theta)) \in \cB$ and a unique $\lambda(\theta) \in \R$ such that
\[
G_{\lambda(\theta)}(\theta) = \theta, \quad \bfJ(\xi^\star(\theta,\lambda(\theta)),\theta,\lambda(\theta) ) \leq \bfJ(\xi,\theta,\lambda(\theta) ), \; \forall \xi \in \cB.
\]
This implies that $\xi^\star(\theta,\lambda(\theta)) \in \cB_\theta$ and, in virtue of \eqref{eq:equality_costs}, $\bfJ^{mfc}(\xi^\star(\theta,\lambda(\theta))) \leq \bfJ^{mfc}(\xi)$ for any $\xi \in \cB_\theta$.
Thus, $\xi^\star(\theta,\lambda(\theta))$ is optimal for the constrained optimization problem.
To see equality \eqref{eq:value_function:unconstrained}, it is enough to notice that the control $\xi^\star(\theta,\lambda(\theta))$ is optimal in the class $\cB$ for the control problem $\bfJ(\cdot,\theta,\lambda(\theta))$ by Proposition \ref{prop:ergodic_ctrl:solution} and by choice of $\lambda(\theta)$.
\end{proof}

\subsection{Proving the necessary condition}\label{sec:necessary_final}

We conclude the proof of Theorem \ref{thm:necessary_condition} by deriving the probabilistic representation of the derivative of the value function of the constrained optimization $V(\theta)$.
To this extent, we first prove that $\lambda(\theta)$ is locally Lipschitz (see Lemma \ref{lemma:lambda_differentiable}), hence a.e. differentiable.
Then, we show that $V(\theta)$ is everywhere differentiable
(see Lemma \ref{lemma:value_function:differentiability}).
Then, we relate the derivative of $V$ and of the cost functional $\bfJ(\xi,\theta,\lambda(\theta))$, relying on the envelope theorem of \cite{milgrom2002envelope}.
Finally, we show that $V_\theta(\theta^\star) = 0$ if and only if \eqref{mfg:cons:lambda} holds, thus concluding the proof.

\begin{lemma}\label{lemma:lambda_differentiable}
$\lambda(\theta)$ is locally Lipschitz, hence differentiable a.e.
\end{lemma}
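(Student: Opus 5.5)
The idea is to view $\lambda(\theta)$ as the unique zero in $\lambda$ of
\[
H(\theta,\lambda)\coloneqq G_\lambda(\theta)-\theta,
\]
and to get local Lipschitz continuity from a quantitative monotonicity estimate. This avoids differentiability of $H$, which we only have almost everywhere.

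The first step is to show that $(\theta,\lambda)\mapsto G_\lambda(\theta)$ is locally Lipschitz. By \eqref{eq:fixed_lambda:fixed_point_map}, $G_\lambda(\theta)=g(a_+(\theta,\lambda),a_-(\theta,\lambda))$, where
\[
g(u,v)=\frac{\int_u^v xp'(x)\,dx}{\int_u^v p'(x)\,dx}.
\]
The map $g$ is smooth on $\{u<v\}$, and its partial derivatives are bounded on compact subsets of this set. By Lemma \ref{lemma:theta:boundary_separeted}, $a_-(\theta,\lambda)-a_+(\theta,\lambda)\ge x_-(\theta,\lambda)-x_+(\theta,\lambda)=\delta(K_++K_-)/(\rho+\phi)>0$ uniformly in $(\theta,\lambda)$. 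By \eqref{eq:barriers:lipschitz}, $a_\pm$ are Lipschitz, so they map bounded sets of $(\theta,\lambda)$ into compact subsets of $\{u<v\}$. Hence $G$ is locally Lipschitz in $(\theta,\lambda)$, with some constant $L_R$ on $[-R,R]^2$.

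The second step is a uniform lower bound on the $\lambda$-increments of $F$. In the proof of Proposition \ref{prop:mfg:fixed_mean} the partial derivatives satisfy $\partial_u g\ge0$ and $\partial_v g\ge0$, and both expressions are strictly positive. Since they are continuous, they are bounded below by some $\kappa_R>0$ on the relevant compact set. Using the bi-Lipschitz bound \eqref{eq:bilipschitz:lambda}, namely $a_\pm(\theta,\lambda')-a_\pm(\theta,\lambda)\ge c(\lambda'-\lambda)$ for $\lambda'>\lambda$, I integrate $g$ along the segment joining $(a_+,a_-)(\theta,\lambda)$ to $(a_+,a_-)(\theta,\lambda')$. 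Both coordinates increase along this segment, so
\[
G_{\lambda'}(\theta)-G_\lambda(\theta)\ge 2c\,\kappa_R(\lambda'-\lambda).
\]

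The third step first shows that $\lambda(\theta)$ is locally bounded. This holds because $a_+(\theta,\lambda)\le G_\lambda(\theta)\le a_-(\theta,\lambda)$, and the Lipschitz and bi-Lipschitz bounds on $a_\pm$ force $|\lambda(\theta)|\le C(1+|\theta|)$: if $\lambda$ were large, $a_+$ would exceed $\theta$. Now take $\theta,\theta'\in[-R,R]$ and set $\lambda=\lambda(\theta)$, $\lambda'=\lambda(\theta')$, both in some $[-R',R']$. Then
\[
0=H(\theta',\lambda')-H(\theta,\lambda)=\bigl[G_{\lambda'}(\theta')-G_{\lambda}(\theta')\bigr]+\bigl[G_{\lambda}(\theta')-G_{\lambda}(\theta)\bigr]-(\theta'-\theta).
\]
The first bracket is at least $2c\kappa|\lambda'-\lambda|$ in absolute value and has the sign of $\lambda'-\lambda$. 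The remaining terms are bounded by $(L+1)|\theta-\theta'|$. Therefore $|\lambda'-\lambda|\le\frac{L+1}{2c\kappa}|\theta-\theta'|$. Rademacher's theorem then gives differentiability almost everywhere.

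The main obstacle is the second step, the uniform positive lower bound. It requires checking that the strict positivity in the computation of $F'$ is quantitative on compacts. This comes from the continuity and strict positivity of $\partial_u g$ and $\partial_v g$ on compact subsets of $\{u<v\}$, together with the uniform gap between $a_+$ and $a_-$.
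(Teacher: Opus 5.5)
Your proof is correct, and it takes a different route from the paper.

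\textbf{The paper's route.} It works with $Q(\theta,\lambda)=\int_{a_+(\theta,\lambda)}^{a_-(\theta,\lambda)}(x-\theta)p'(x)\,dx$. It shows that $Q_\lambda\ge\kappa>0$ almost everywhere in a neighbourhood of each zero $(\theta_0,\lambda_0)$. This step uses $a_+<\theta_0<a_-$ at the zero, together with continuity of a lower-bound function. It then invokes Clarke's implicit function theorem for locally Lipschitz maps, and identifies the resulting local Lipschitz solution with $\lambda(\theta)$ through the uniqueness in Proposition~\ref{prop:mfg:fixed_mean}.

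\textbf{Your route.} You prove by hand what the nonsmooth implicit function theorem would provide. First, $(\theta,\lambda)\mapsto G_\lambda(\theta)$ is jointly locally Lipschitz. Second, you get the quantitative monotonicity $G_{\lambda'}(\theta)-G_\lambda(\theta)\ge 2c\kappa(\lambda'-\lambda)$ by integrating $\nabla g$ along segments, so no almost-everywhere chain rule for the composition is needed. Third, a two-point estimate finishes the argument.

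\textbf{What each approach buys.} Working with $g$ rather than $Q$ helps because $\partial_u g,\partial_v g>0$ on all of $\{u<v\}$. Your lower bound therefore holds on every compact set, not only near zeros of $Q$; the factor $(x-\theta)$ in $Q$ is what forces the paper to localize. The price is that you need the a priori bound $|\lambda(\theta)|\le C(1+|\theta|)$ to keep $(\theta,\lambda(\theta))$ in a compact set. You derive this correctly from $a_+\le\theta\le a_-$ and the bi-Lipschitz bounds. The paper avoids it because the implicit function theorem is local and uniqueness does the identification. Your argument is more elementary and self-contained, and it gives explicit local Lipschitz constants $(L+1)/(2c\kappa)$. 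The paper's is shorter, given the external theorem.

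\textbf{One detail to state explicitly.} Take the compact set in your second step to be the convex set $\{(u,v):|u|,|v|\le M,\ v-u\ge \delta(K_++K_-)/(\rho+\phi)\}$. Then the segments you integrate along, and the mean-value argument in your first step, stay inside the region where $\kappa_R$ and $L_R$ are computed.
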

\begin{proof}
Define the map $Q:\R^2 \to \R$ by setting
\begin{equation}
    Q(\theta,\lambda) = \int_{a_+(\theta,\lambda)}^{a_-(\theta,\lambda)}(x - \theta)p'(x) dx.
\end{equation}
We notice that $Q(\theta,\lambda(\theta)) = 0$ is equivalent to $\theta=G_{\lambda(\theta)}(\theta)$, for $G_\lambda$ defined by \eqref{eq:fixed_lambda:fixed_point_map}.
We notice also that $Q$ is a locally Lipschitz map, being $a_\pm(\theta,\lambda)$ Lipschitz and $(\alpha,\beta,\theta) \mapsto \int_\alpha^\beta (x-\theta)p'(x)dx$ locally Lipschitz.
By Proposition \ref{prop:mfg:fixed_mean}, for any $\theta_0 \in \R$ there exists a unique $\lambda_0$ so that $Q(\theta_0,\lambda_0) = 0$.
We want to show that there exists an open neighborhood $I$ of $(\theta_0,\lambda_0)$ where it holds
\begin{equation}\label{eq:bound_Q_lambda}
    Q_\lambda(\theta,\lambda) \geq \kappa, \quad \text{for a.e $(\theta,\lambda) \in I$}.
\end{equation}
This implies that we can apply the generalized implicit function theorem for 
locally Lipschitz functions (see \cite[Section 7.1, Corollary at p. 256]{clarke1990optimization}) to deduce that there exists a neighborhood $U$ of $\theta_0$ and a function $\Tilde{\lambda}(\theta)$ Lipschitz on $U$ so that $Q(\theta,\Tilde{\lambda}(\theta)) = 0$.
Since for any $\theta$ there exists a unique $\lambda(\theta)$ so that $Q(\theta,\lambda(\theta))=0$ by Proposition \ref{prop:mfg:fixed_mean}, we have $\Tilde{\lambda}(\theta) = \lambda(\theta)$, which implies that $\lambda(\theta)$ is locally Lipschitz, hence differentiable a.e.

To prove \eqref{eq:bound_Q_lambda}, we adopt the following reasoning: Let $U=\{(\theta,\lambda) \in \R^2: \, a_+(\theta,\lambda) < \theta < a_-(\theta,\lambda) \}$ and notice that, by jointy continuity of $a_\pm(\theta,\lambda)$, it is an open set.
Since $a_\pm(\theta,\lambda)$ are a.e. differentiable, $Q(\theta,\lambda)$ is itself a.e. differentiable. In particular, for a.e. $(\theta,\lambda) \in U$ it holds
\begin{equation}\label{eq:sign_q_lambda2}
\begin{aligned}
    Q_\lambda(\theta,\lambda) & = \partial_\lambda a_-(\theta,\lambda) \big( a_-(\theta,\lambda) - \theta \big) p'\big( a_-(\theta,\lambda) \big) +   \partial_\lambda a_+(\theta,\lambda) \big(\theta - a_+(\theta,\lambda) \big) p'\big( a_+(\theta,\lambda) \big) \\
    & \geq c \Big( \big( a_-(\theta,\lambda) - \theta \big) p'\big( a_-(\theta,\lambda) \big) + \big(\theta - a_+(\theta,\lambda) \big) p'\big( a_+(\theta,\lambda) \big) \Big) =: H(\theta,\lambda),
\end{aligned}
\end{equation}
where the estimates holds since, for $(\theta,\lambda) \in U$, we have $a_-(\theta,\lambda) - \theta > 0$ and $\theta - a_+(\theta,\lambda) > 0$.
Observe that $H(\theta,\lambda)$ is jointly continuous in $(\theta,\lambda)$ since $a_\pm(\theta,\lambda)$ are jointly Lipschitz continuous by \eqref{eq:barriers:lipschitz} and $p'(x)$ defined by \eqref{OU:speed_measure} is a continuous function.
Suppose now that $(\theta_0,\lambda_0)$ is so that $Q(\theta_0,\lambda_0)=0$.
Since $Q(\theta_0,\lambda) = 0$ is equivalent to have $G_\lambda(\theta) = \theta$, it holds $a_+(\theta_0,\lambda_0) < \theta_0 < a_-(\theta_0,\lambda_0)$, i.e. $(\theta_0,\lambda_0) \in U$.
This implies that
\[
H(\theta_0,\lambda_0) = c \Big( \big( a_-(\theta_0,\lambda_0) - \theta_0 \big) p'\big( a_-(\theta_0,\lambda_0) \big) + \big(\theta_0 - a_+(\theta_0,\lambda_0) \big) p'\big( a_+(\theta_0,\lambda_0) \big) \Big) > 0.
\]
Since $H(\theta,\lambda)$ is continuous, there exists a neighborhood $V$ of $(\theta_0,\lambda_0)$ such that $H(\theta,\lambda) > \frac{1}{2} H(\theta_0,\lambda_0)$.
We now take $I = U \cap V$ and notice that $(\theta_0,\lambda_0) \in I$.
Then, for a.e. $(\theta,\lambda) \in I$, it holds
\[
Q_\lambda(\theta,\lambda) \geq H(\theta,\lambda) \geq \frac{1}{2}H(\theta_0,\lambda_0) > 0.
\]
This concludes the proof.
\end{proof}

We need the following technical result, which ensures that, for a control $\xi \in \cB_{mfc}$ satisfying extra-integrability conditions, the cost functional $\bfJ$ is differentiable with respect to $\theta$ and $\lambda$, with explicit partial derivatives.
\begin{lemma}\label{lemma:diffentiability_cost}
Let $\xi \in \cB_{mfc}$ be such that the integrability condition \eqref{eq:integrability_control} is satisfied.
Then, the maps $(\theta,\lambda) \mapsto \bfJ(\xi,\theta,\lambda)$ is differentiable, and its partial derivatives are given by
\begin{equation}\label{eq:derivative_J:theta}
\begin{aligned}
    & \partial_\theta \bfJ(\xi,\theta,\lambda) \coloneqq \lim_{h \to 0}\frac{1}{h}\left( \bfJ(\xi,\theta+h,\lambda) - \bfJ(\xi,\theta,\lambda) \right) = \lim_{T \to \infty} \frac{1}{T} \E\left[\int_0^T (l_\theta(X^{\xi}_t,\theta) + \lambda )dt \right], \\
    & \partial_\lambda \bfJ(\xi,\theta,\lambda) = \lim_{h \to 0}\frac{1}{h}\left( \bfJ(\xi,\theta,\lambda+h) - \bfJ(\xi,\theta,\lambda) \right) = \lim_{T \to \infty} \frac{1}{T} \E\left[\int_0^T (\theta - X^\xi_t)dt \right].
\end{aligned}
\end{equation}
\end{lemma}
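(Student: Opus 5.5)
The plan is to exploit two facts. First, $\bfJ(\xi,\theta,\lambda)$ depends on $(\theta,\lambda)$ only through a polynomial in the ergodic averages of $X^\xi_t$ and $(X^\xi_t)^2$. Second, for $\xi\in\cB_{mfc}$ the first of these averages actually converges, so the $\varlimsup$ only affects a term that does not depend on $(\theta,\lambda)$.

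Expanding \eqref{eq:instant_cost}, I would write, for fixed $\xi$ and every $T>0$,
\begin{equation*}
\frac1T\E\Big[\int_0^T \big(l(X^\xi_t,\theta)+\lambda(\theta-X^\xi_t)\big)dt+K_+\xi^+_T+K_-\xi^-_T\Big] = A_T + B_T(\theta,\lambda),
\end{equation*}
with
\begin{equation*}
A_T \coloneqq \frac1T\E\Big[\int_0^T \tfrac12(\rho+\phi)(X^\xi_t)^2dt+K_+\xi^+_T+K_-\xi^-_T\Big] - \rho\bar x\, m_T ,
\end{equation*}
\begin{equation*}
B_T(\theta,\lambda) \coloneqq -\phi\theta\, m_T + \tfrac12\rho\bar x^2+\tfrac12\phi\theta^2+\tfrac12\psi(\theta-\bar\theta)^2 + \lambda(\theta - m_T),
\end{equation*}
and $m_T\coloneqq \frac1T\E[\int_0^T X^\xi_t dt]$. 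Condition \eqref{eq:integrability_control} guarantees that $m_T$ and the quadratic part of $A_T$ are finite and bounded in $T$, so $\varlimsup_T A_T$ is finite. It is here that the integrability assumption is needed, since otherwise $\bfJ$ might be $+\infty$ and differences would be meaningless.

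Next I would use that $\xi\in\cB_{mfc}$, arguing as in the remark justifying \eqref{eq:equality_costs}: ergodicity gives $m_T\to\theta^\xi$. This is the one delicate point. Convergence of Cesàro averages of $\E[X^\xi_t]$ to the stationary mean should follow from ergodicity with the finite first moment in Definition \ref{def:ergodic_strategies}, and uniform integrability from \eqref{eq:integrability_control} can be invoked if needed. I would simply take it as the same fact used in that remark. Hence $B_T(\theta,\lambda)\to B(\theta,\lambda)$, where $B$ is $B_T$ with $m_T$ replaced by $\theta^\xi$, and the convergence is locally uniform in $(\theta,\lambda)$. Using $\varlimsup(a_n+b_n)=\varlimsup a_n+\lim b_n$ for convergent $b_n$, I get
\begin{equation*}
\bfJ(\xi,\theta,\lambda)=\varlimsup_T A_T + B(\theta,\lambda).
\end{equation*}
The first term is a finite constant independent of $(\theta,\lambda)$, and $B$ is an explicit polynomial in $(\theta,\lambda)$.

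Therefore $\bfJ(\xi,\cdot,\cdot)$ is differentiable, indeed smooth. Its derivatives are
\begin{equation*}
\partial_\theta\bfJ=-\phi\theta^\xi+\phi\theta+\psi(\theta-\bar\theta)+\lambda, \qquad \partial_\lambda\bfJ=\theta-\theta^\xi.
\end{equation*}
The last step is to identify these with the right-hand sides of \eqref{eq:derivative_J:theta}. Since $l_\theta(x,\theta)=-\phi(x-\theta)+\psi(\theta-\bar\theta)$ is affine in $x$, we have
\begin{equation*}
\frac1T\E\Big[\int_0^T (l_\theta(X^\xi_t,\theta)+\lambda)dt\Big] = -\phi(m_T-\theta)+\psi(\theta-\bar\theta)+\lambda,
\end{equation*}
which converges to $\partial_\theta\bfJ$ as $T\to\infty$. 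Likewise, $\frac1T\E[\int_0^T(\theta-X^\xi_t)dt]=\theta-m_T\to\partial_\lambda\bfJ$. In particular, these limits exist, as the statement asserts. Apart from the convergence of $m_T$, every step is routine bookkeeping.
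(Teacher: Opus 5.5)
Your argument is correct, modulo one caveat below, but it takes a different and simpler route than the paper. The paper works with difference quotients. It writes $\bfJ(\xi,\theta+h,\lambda)-\bfJ(\xi,\theta,\lambda)$ as the difference of the limits of the full running-cost averages and cancels the common $\varlimsup$ of the control costs. It then notes that the resulting quotient $f(h,T)$ differs from $\frac1T\E[\int_0^T(l_\theta(X^\xi_t,\theta)+\lambda)dt]$ by exactly $\frac{\phi+\psi}{2}h$ for every $T$, and exchanges $h\to0$ and $T\to\infty$ via the Moore--Osgood theorem.

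You instead put everything independent of $(\theta,\lambda)$ (the quadratic term, the control costs and $-\rho\bar x\,m_T$) into a single $\varlimsup$. Then $\bfJ(\xi,\cdot,\cdot)$ is a constant plus an explicit quadratic polynomial, and no interchange of limits is needed. This buys two things.
\begin{enumerate}
\item You only use $m_T\to\theta^\xi$, the same fact as in the remark justifying \eqref{eq:equality_costs}. The paper's splitting, as written, uses convergence of the whole running-cost average, including $\frac1T\E[\int_0^T(X^\xi_t)^2dt]$. Condition \eqref{eq:integrability_control} bounds this average but does not make it convergent.
\item You get joint, indeed $C^\infty$, differentiability directly, with closed-form derivatives $\partial_\theta\bfJ=\phi(\theta-\theta^\xi)+\psi(\theta-\bar\theta)+\lambda$ and $\partial_\lambda\bfJ=\theta-\theta^\xi$. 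This is what the chain rule in Lemma \ref{lemma:value_function:differentiability} uses. The paper's argument yields only partial derivatives, and joint differentiability needs the extra observation that they are continuous.
\end{enumerate}
The Moore--Osgood template would be the more flexible tool if $l$ depended on $\theta$ non-polynomially, but here your decomposition is cleaner.

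The caveat: your claim that \eqref{eq:integrability_control} makes $\varlimsup_T A_T$ finite is not right. That condition controls the quadratic term and $m_T$, but not $\frac1T\E[K_+\xi^+_T+K_-\xi^-_T]$. Take the deterministic control $\xi_t=(1+t)^{-1/2}\sin(t^3)$. Then $X^\xi_t-X^0_t\to0$, so \eqref{eq:integrability_control} holds and $X^\xi$ converges in law to the Ornstein--Uhlenbeck stationary law, yet $\frac1T\vert\xi\vert_T\to\infty$. Hence $\bfJ(\xi,\cdot,\cdot)\equiv+\infty$ and the lemma is void. You should add the hypothesis $\bfJ(\xi,\theta,\lambda)<\infty$ for one, hence every, $(\theta,\lambda)$. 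The paper's proof tacitly needs the same when it cancels the two $\varlimsup$'s of the control costs. This is harmless in the application, where $\xi$ is a reflection control with finite cost by Proposition \ref{prop:ergodic_ctrl:solution}.
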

\begin{proof}
The proof follows from an application of the Moore-Osgood theorem. 
Let $\xi$ be as in the statement of the theorem.
Let $h \in \R$, $h \neq 0$.
By exploiting the ergodicity of the process $X^\xi$ and the equality $\varlimsup_{n \to \infty}(a_n + b_n) = \lim_{n \to \infty} a_n + \varlimsup_{n \to \infty}b_n$ for any pair of sequences $(a_n)_{n \geq 1}$, $(b_n)_{n \geq 1}$ with $a_n$ convergent, we get
\begin{align*}
    & \frac{1}{h} \left( \bfJ(\xi,\theta+h,\lambda) - \bfJ(\xi,\theta,\lambda) \right) \\
    & = \frac{1}{h}\left( \lim_{T \to \infty}\frac{1}{T}\E\left[\int_0^T \Big( l(X^{\xi}_t,\theta+h) + \lambda(\theta + h - X^{\xi}_t) \Big) dt\right] + \varlimsup_{T \uparrow \infty}\frac{1}{T}\E\left[ K_+\xi^{+}_T + K_-\xi^{-}_T \right] \right.\\
    & \quad \left. - \lim_{T \to \infty}\frac{1}{T}\E\left[\int_0^T \Big( l(X^{\xi}_t,\theta) + \lambda(\theta - X^{\xi}_t) \Big) dt\right] - \varlimsup_{T \uparrow \infty}\E\frac{1}{T}\left[ K_+\xi^{+}_T + K_-\xi^{-}_T \right] \right) \\
    & = \lim_{T \to \infty}\frac{1}{T}\E\left[\int_0^T \Big( \frac{1}{h} \Big( l(X^{\xi}_t,\theta+h) -  l(X^{\xi}_t,\theta) \Big) + \lambda \Big) dt \right].
\end{align*}
Set
\begin{equation*}
    f(h,T) = \frac{1}{T}\E\left[\int_0^T \Big( \frac{1}{h} \Big( l(X^{\xi}_t,\theta+h) -  l(X^{\xi}_t,\theta) \Big) + \lambda \Big)dt \right], \qquad g(T) = \frac{1}{T}\E\left[\int_0^T \Big( l_\theta(X^{\xi}_t,\theta) + \lambda \Big)dt \right]
\end{equation*}
We show that $\lim_{h \to 0}f(h,T) = g(T)$ uniformly in $T > 0$.
Then, by the Moore-Osgood theorem, it holds
\[
\lim_{h \to 0} \frac{1}{h}(\bfJ(\xi,\theta+h,\lambda) - \bfJ(\xi,\theta,\lambda) ) = \lim_{h \to 0} \lim_{T \to \infty} f(h,T) = \lim_{T \to \infty} g(T) = \lim_{T \to \infty}\frac{1}{T}\E\left[\int_0^T \Big( l_\theta(X^{\xi}_t,\theta) + \lambda \Big)dt \right]
\]
where the last limit exists since $X^\xi$ is ergodic as $\xi \in \cB_{mfc}$ and the process satisfies the uniform integrability bound \eqref{eq:integrability_control}.
To see the uniform convergence, we notice that, since $l_\theta(x,\theta) = \phi(\theta - x) + \psi(\theta - \bar{\theta})$, it holds
\begin{align*}
    f(h,T) - g(T) = \frac{1}{T} \E\left[\int_0^T \Big( \int_0^1 l_\theta(X^{\xi}_t,\theta + s h)ds  - l_\theta(X^{\xi}_t,\theta) \Big)dt \right] = \frac{\phi + \psi}{2}h
\end{align*}
for any $T > 0$.
As this obviously goes to $0$ uniformly in $T$ as $h \to 0$, the result follows.
\end{proof}

\begin{lemma}\label{lemma:value_function:differentiability}
At any point $\theta \in \R$, the function $V(\theta)$ is differentiable and it holds
\begin{equation}\label{eq:derivative_V}
    V_\theta(\theta) = \int_{\R} l_\theta(x,\theta) p^{\theta,\lambda(\theta)}_\infty(dx) + \lambda(\theta).
\end{equation}
\end{lemma}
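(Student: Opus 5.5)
The argument is an envelope-type computation. It combines the representation \eqref{eq:value_function:unconstrained} of $V$ with the derivative formula of Lemma \ref{lemma:diffentiability_cost}, and uses the consistency $\theta^\star(\lambda(\theta))=\theta$ to kill the term coming from the dependence of the multiplier on $\theta$.

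\emph{Step 1: write $V$ through a fixed control.} For $\theta\in\R$, set $\lambda=\lambda(\theta)$ and $\xi^\theta\coloneqq\xi^\star(\theta,\lambda(\theta))$. By Proposition \ref{prop:solution_constrained_problem}, $V(\theta)=\bfJ(\xi^\theta,\theta,\lambda(\theta))$. The process $X^{\xi^\theta}$ is reflected in the compact interval $[a_+,a_-]$, so $\xi^\theta\in\cB_{mfc}$ and \eqref{eq:integrability_control} holds trivially. Lemma \ref{lemma:diffentiability_cost} therefore applies, and its limits can be identified through the stationary law $p^{\theta,\lambda(\theta)}_\infty$:
\[
\partial_\theta\bfJ(\xi^\theta,\theta,\lambda(\theta))=\int_\R l_\theta(x,\theta)\,p^{\theta,\lambda(\theta)}_\infty(dx)+\lambda(\theta),\qquad \partial_\lambda\bfJ(\xi^\theta,\theta,\lambda(\theta))=\theta-\int_\R x\,p^{\theta,\lambda(\theta)}_\infty(dx)=0 .
\]
The second quantity vanishes precisely because of the consistency condition.

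\emph{Step 2: two-sided bounds on difference quotients.} For $h\neq0$, optimality of $\xi^{\theta+h}$ for $\bfJ(\cdot,\theta+h,\lambda(\theta+h))$ over all of $\cB$ gives
\[
V(\theta+h)-V(\theta)\le \bfJ(\xi^\theta,\theta+h,\lambda(\theta+h))-\bfJ(\xi^\theta,\theta,\lambda(\theta)).
\]
Symmetrically, optimality of $\xi^{\theta}$ for $\bfJ(\cdot,\theta,\lambda(\theta))$ gives
\[
V(\theta+h)-V(\theta)\ge \bfJ(\xi^{\theta+h},\theta+h,\lambda(\theta+h))-\bfJ(\xi^{\theta+h},\theta,\lambda(\theta)).
\]
For a fixed ergodic control with bounded state, the map $(\theta',\lambda')\mapsto\bfJ(\xi,\theta',\lambda')$ is, up to the constant control-cost limsup, an explicit polynomial. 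It is quadratic in $\theta'$ with coefficient $(\phi+\psi)/2$, affine in $\lambda'$, and contains the bilinear term $\lambda'\theta'$. Expanding exactly, the upper bound equals
\[
\big[\partial_\theta\bfJ(\xi^\theta,\theta,\lambda(\theta))\big]h+\frac{\phi+\psi}{2}h^2+(\lambda(\theta+h)-\lambda(\theta))\Big(\theta+h-\int x\,p^{\theta,\lambda(\theta)}_\infty(dx)\Big).
\]
Since the mean is $\theta$, the last factor equals $h$, so the cross term is $(\lambda(\theta+h)-\lambda(\theta))h$. The lower bound expands in the same way around $\xi^{\theta+h}$, whose stationary mean is $\theta+h$; its cross term is again of the form $(\lambda(\theta+h)-\lambda(\theta))h$.

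\emph{Step 3: pass to the limit (the main obstacle).} After dividing by $h$, we need $\lambda(\theta+h)\to\lambda(\theta)$ as $h\to0$. We also need $\int l_\theta(x,\theta+h)\,p^{\theta+h,\lambda(\theta+h)}_\infty(dx)\to\int l_\theta(x,\theta)\,p^{\theta,\lambda(\theta)}_\infty(dx)$. The first holds because Lemma \ref{lemma:lambda_differentiable} gives that $\lambda$ is locally Lipschitz, hence continuous; this is the point where everywhere differentiability requires only continuity of $\lambda$ and not its derivative. The second then follows from the joint continuity of $a_\pm$ (Lemma \ref{lemma:theta:boundary_regularity}) and the explicit density \eqref{eq:stationary_distribution}. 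Indeed, $l_\theta(x,\theta)=\phi(\theta-x)+\psi(\theta-\bar\theta)$ is affine in $x$, so only the stationary means are involved, and these equal $\theta+h$ and $\theta$ exactly.

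Both difference quotients then converge to $\int l_\theta(x,\theta)\,p^{\theta,\lambda(\theta)}_\infty(dx)+\lambda(\theta)$. This gives differentiability of $V$ at every $\theta$ together with formula \eqref{eq:derivative_V}. The care needed lies in doing the exact algebraic expansion in Step 2, rather than a first-order one, so that all remainders are visibly $O(h^2)$ or $o(h)$.
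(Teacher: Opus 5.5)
Your proof is correct, and it takes a genuinely different route from the paper. The paper argues in three stages. First, it notes that $V(\theta)=\kappa(\theta,\lambda(\theta))$ is locally Lipschitz (Lipschitz barriers plus Lemma~\ref{lemma:lambda_differentiable}), hence a.e.\ differentiable. Second, at points where both $V$ and $\lambda$ are differentiable, it applies the Milgrom--Segal envelope theorem with the chain rule $V_\theta=\partial_\theta\bfJ+\partial_\lambda\bfJ\,\lambda'(\theta)$. The partial derivatives come from Lemma~\ref{lemma:diffentiability_cost}, and the consistency condition kills $\partial_\lambda\bfJ$. Third, it upgrades to everywhere differentiability because the right-hand side of \eqref{eq:derivative_V} is continuous and equals $V'$ a.e.; this step relies on $V$ being locally Lipschitz.

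You instead prove the envelope inequality by hand. The sandwich from optimality of $\xi^\theta$ and of $\xi^{\theta+h}$, combined with the exact quadratic expansion of $\bfJ(\xi,\cdot,\cdot)$ for a fixed control, gives the derivative at every point at once. This makes the argument noticeably lighter. You only need continuity of $\lambda$, never $\lambda'$, so Rademacher's theorem and the a.e.-to-everywhere step drop out. Lemma~\ref{lemma:diffentiability_cost} is not really needed either, since your expansion computes the relevant increments exactly. The role of consistency also becomes transparent: it turns the cross term $(\lambda(\theta+h)-\lambda(\theta))\big(\theta+h-\int_\R x\,p^{\theta,\lambda(\theta)}_\infty(dx)\big)$ into $(\lambda(\theta+h)-\lambda(\theta))h=o(h)$.

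Your remark that $l_\theta$ is affine in $x$ further shows that the right-hand side of \eqref{eq:derivative_V} is just $\psi(\theta-\bar\theta)+\lambda(\theta)$. This makes its continuity obvious and anticipates the choice $\lambda^\star(\theta)=\psi(\bar\theta-\theta)$ in the proof of Theorem~\ref{thm:mfg:existence}.

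The paper's route, in turn, is packaged as an instance of a general envelope theorem, requiring only differentiability of $\bfJ$ in the parameters. Yours exploits the exact polynomial dependence on $(\theta,\lambda)$, though a Taylor bound with remainder uniform over the two controls would serve equally well.
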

\begin{proof}
Let $\theta \in \R$ and let $\xi^\star(\theta) \coloneqq \xi^\star(\theta,\lambda(\theta))$ as given by Proposition \ref{prop:ergodic_ctrl:solution}.
Notice that $\xi^\star(\theta) \in \cB_{mfc}$, with stationary distribution given by \eqref{eq:stationary_distribution} and it satisfies condition \eqref{eq:integrability_control} since it is always bounded by the constants $a_\pm(\theta,\lambda(\theta))$.
Thus, by Lemma \ref{lemma:diffentiability_cost}, for any $(\theta,\lambda) \in \R^2$ the partial derivatives with respect to $\theta$ and $\lambda$ of $\bfJ$ at $(\xi^\star(\theta),\theta,\lambda)$ exist, and they are given by \eqref{eq:derivative_J:theta}.
Moreover, by Proposition \ref{prop:ergodic_ctrl:solution} and \eqref{eq:ctrl_problem:potential_value}, we have
\[
V(\theta)  = \kappa(\theta,\lambda(\theta)) = K_+\delta a_+(\theta,\lambda(\theta)) +  l\big( a_+(\theta,\lambda(\theta)),\theta) + \lambda(\theta)(\theta - a_+(\theta,\lambda(\theta))).
\]
Since $a_+(\theta,\lambda)$ is Lipschitz in $(\theta,\lambda)$ and $\lambda(\theta)$ is locally Lipschitz, we conclude that $V$ is differentiable a.e.
Let now $\theta \in \R$ be a point of differentiability of both $V(\theta)$ and $\lambda(\theta)$.
By applying the envelope theorem (see, e.g., \cite[Theorem 1]{milgrom2002envelope}), it holds
\begin{align*}
    V_\theta(\theta) & = \partial_\theta \bfJ(\xi,\theta,\lambda(\theta))\Big\vert_{\xi = \xi^{\star}(\theta)} = \partial_\theta \bfJ(\xi^{\star}(\theta),\theta,\lambda(\theta)) + \partial_\lambda \bfJ(\xi^{\star}(\theta),\theta,\lambda(\theta)) \lambda'(\theta) \\
    & = \lim_{T \to \infty} \frac{1}{T}\E\left[ \int_0^T \big( l_\theta(X^{\xi^{\star}(\theta)}_t,\theta) +\lambda(\theta) \big) dt \right] + \lambda'(\theta) \lim_{T \to \infty} \frac{1}{T}\E\left[\int_0^T (\theta - X^{\xi^\star(\theta)}_t) dt \right] \\
    & = \lim_{T \to \infty} \frac{1}{T}\E\left[ \int_0^T \big( l_\theta(X^{\xi^{\star}(\theta)}_t,\theta) +\lambda(\theta) \big) dt \right],
\end{align*}
which can be rewritten as \eqref{eq:derivative_V}, where in the last equality we exploited the fact that $\xi^\star(\theta)$ has stationary mean given by $\theta$ itself.
As the right-hand side of \eqref{eq:derivative_V} is continuous and coincides with $V'(\theta)$ for a.e. $\theta$, we deduce that $V$ is differentiable everywhere with derivative given by \eqref{eq:derivative_V}.
\end{proof}

We can now conclude the proof of Theorem \ref{thm:necessary_condition}:

\begin{proof}[Proof of Theorem \ref{thm:necessary_condition}]
If $\theta^\star$ is a minimum point of $V$, it holds $V_\theta(\theta^\star) = 0$, which, by \eqref{eq:derivative_V}, implies \eqref{mfg:cons:lambda}.
\end{proof}

\section{From mean-field game solutions to mean-field control solutions}\label{sec:sufficient}
In this section we prove Theorem \ref{thm:sufficient_condition}, showing that any solution to the potential stationary MFG is also a solution to the stationary MFC problem.
Moreover, we prove that the potential stationary MFG admits an equilibrium so that conditions \eqref{eq:sufficient_condition:assumption_limit} and \eqref{eq:integrability_control} are satisfied, thus providing an explicit solution to the stationary MFC problem.

\smallskip
We start this section by proving the following first-order optimality conditions for the ergodic singular control problem, inspired by \cite[Lemma 4.3]{cannerozzi2024cooperation}.
Notice that these first order optimality conditions are tailor-made for our dynamics and, most importantly, for the ergodic cost criterion.

\begin{proposition}\label{prop:first_order}
Let $(\theta,\lambda) \in \R^2$ and let $\xi^{\star}(\theta,\lambda)$ be optimal for $\bfJ(\cdot,\theta,\lambda)$.
Suppose that $\xi^\star(\theta,\lambda)$ satisfies the integrability condition \eqref{eq:integrability_control}. 
Then, for any other $\xi \in \cB$ that satisfies  \eqref{eq:integrability_control} as well, it holds
\begin{equation}\label{eq:optimality:first_order_condition}
    \varliminf_{T \to \infty} \frac{1}{T}\E\left[ \int_0^T \big( l_x(X^{\xi^{\star}}_t,\theta) - \lambda \big) (X^{\xi^{\star}}_t - X^{\xi}_t)  + K_+ (\xi^{\star,+}_T(\theta,\lambda) - \xi^{+}_T) + K_- (\xi^{\star,-}_T(\theta,\lambda) - \xi^{-}_T)   \right] \leq 0.
\end{equation}
\end{proposition}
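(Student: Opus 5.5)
We will prove \eqref{eq:optimality:first_order_condition} with a convex perturbation argument, using the convexity of the cost in the state. Fix $\xi\in\cB$ satisfying \eqref{eq:integrability_control} and, for $\epsilon\in(0,1]$, set $\xi^\epsilon \coloneqq \xi^\star + \epsilon(\xi - \xi^\star)$, where we abbreviate $\xi^\star=\xi^\star(\theta,\lambda)$.

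\emph{The perturbed state is affine in $\epsilon$.} The dynamics \eqref{application:dynamics} are linear in the control and share the same initial datum and noise. Hence
\[
X^{\xi^\epsilon}_t = X^{\xi^\star}_t + \epsilon\,(X^{\xi}_t - X^{\xi^\star}_t)
\]
pathwise. Moreover $\xi^\epsilon\in\cB$, and it satisfies \eqref{eq:integrability_control} because $|X^{\xi^\epsilon}|^2\le 2|X^{\xi^\star}|^2+2|X^\xi|^2$.

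\emph{Bounding the proportional costs.} The Jordan decomposition of $\xi^\epsilon$ need not equal $(1-\epsilon)\xi^{\star,\pm}+\epsilon\xi^{\pm}$. However, its total variation is dominated componentwise: $\xi^{\epsilon,\pm}_T\le (1-\epsilon)\xi^{\star,\pm}_T+\epsilon\xi^{\pm}_T$. Consequently
\[
K_+\xi^{\epsilon,+}_T+K_-\xi^{\epsilon,-}_T - \big(K_+\xi^{\star,+}_T+K_-\xi^{\star,-}_T\big) \le \epsilon\big(K_+(\xi^+_T-\xi^{\star,+}_T)+K_-(\xi^-_T-\xi^{\star,-}_T)\big).
\]

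\emph{Expanding the running cost.} The map $x\mapsto l(x,\theta)+\lambda(\theta-x)$ is quadratic with second derivative $\phi+\rho$. Writing $D_t\coloneqq X^{\xi}_t-X^{\xi^\star}_t$, we get exactly
\[
l(X^{\xi^\epsilon}_t,\theta)+\lambda(\theta-X^{\xi^\epsilon}_t) = l(X^{\xi^\star}_t,\theta)+\lambda(\theta-X^{\xi^\star}_t) + \epsilon\big(l_x(X^{\xi^\star}_t,\theta)-\lambda\big)D_t + \tfrac{\epsilon^2}{2}(\phi+\rho)D_t^2 .
\]

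\emph{Using optimality.} Optimality gives $\bfJ(\xi^\star,\theta,\lambda)\le \bfJ(\xi^\epsilon,\theta,\lambda)$. The main technical care lies here, since both sides are $\varlimsup$'s. Set
\[
A_T\coloneqq\frac1T\E\Big[\int_0^T\big(l(X^{\xi^\star}_t,\theta)+\lambda(\theta-X^{\xi^\star}_t)\big)dt+K_+\xi^{\star,+}_T+K_-\xi^{\star,-}_T\Big].
\]
Let $B_T$ be the term of order $\epsilon$, that is, the quantity inside the $\varliminf$ in \eqref{eq:optimality:first_order_condition} with the sign reversed. Let $C_T\coloneqq \frac{\phi+\rho}{2T}\E\int_0^T D_t^2dt$. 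By \eqref{eq:integrability_control} applied to both $\xi$ and $\xi^\star$, we have $\sup_T C_T\le C<\infty$.

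The previous two steps give $\bfJ(\xi^\epsilon)\le\varlimsup_T(A_T+\epsilon B_T+\epsilon^2C_T)$. To compare with $\bfJ(\xi^\star)=\varlimsup_T A_T$, I would use that $A_T$ actually converges. This would require $\xi^\star$ to attain its cost as a limit. Proposition \ref{prop:ergodic_ctrl:solution} gives this for the reflected optimizer (cf.\ \eqref{eq:sufficient_condition:assumption_limit}), and I would invoke or assume it in the setting where the proposition is applied. With $\lim A_T$ existing, we obtain
\[
0\le \varlimsup_T(\epsilon B_T+\epsilon^2C_T)\le \epsilon\varlimsup_T B_T+\epsilon^2 C .
\]
Dividing by $\epsilon$ and letting $\epsilon\downarrow0$ yields $\varlimsup_T B_T\ge0$, that is, $\varliminf_T(-B_T)\le0$. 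This is exactly \eqref{eq:optimality:first_order_condition}.

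\emph{Main obstacle.} The obstacle is the limsup/liminf bookkeeping. If $A_T$ only has a $\varlimsup$, one cannot subtract, and the argument needs a convergent subsequence along which $A_T$ realizes its limsup. Existence of the limit for the optimizer resolves this, and it is the point I would check first.
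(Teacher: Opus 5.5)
Your argument is the paper's: the convex perturbation $\xi^\epsilon=\xi^\star+\epsilon(\xi-\xi^\star)$, the affine dependence of the state on $\epsilon$, the exact quadratic remainder $\epsilon^2C_T$ with $\sup_TC_T<\infty$ by \eqref{eq:integrability_control}, and division by $\epsilon$. Two of your choices are cleaner than the paper's. First, your inequality $\xi^{\epsilon,\pm}_T\le(1-\epsilon)\xi^{\star,\pm}_T+\epsilon\xi^{\pm}_T$ handles the Jordan decomposition correctly. The paper's computation of $f(\epsilon,T)$ tacitly takes these to be equalities, whereas in general only this one-sided bound holds, and it points in the needed direction. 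Second, your direct estimate $0\le\epsilon\varlimsup_TB_T+\epsilon^2C$ replaces the paper's appeal to an external lemma for exchanging $\lim_{\epsilon\downarrow0}$ and $\varlimsup_T$.

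The one gap is the extra hypothesis that $A_T$ converges. It is not among the assumptions of the proposition, which only requires optimality and \eqref{eq:integrability_control}. It does hold where the proposition is used, via \eqref{eq:sufficient_condition:assumption_limit} in Theorem~\ref{thm:sufficient_condition}, but as written you prove a weaker statement. The obstacle you flag is not real, because $\varlimsup$ is subadditive. With $Y_T\coloneqq\epsilon B_T+\epsilon^2C_T$,
\[
\varlimsup_{T} A_T=\bfJ(\xi^\star,\theta,\lambda)\le\bfJ(\xi^\epsilon,\theta,\lambda)\le\varlimsup_{T}(A_T+Y_T)\le\varlimsup_{T} A_T+\varlimsup_{T} Y_T .
\]
The quantity $\bfJ(\xi^\star,\theta,\lambda)$ is finite. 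It is bounded below because $x\mapsto l(x,\theta)+\lambda(\theta-x)$ is bounded below and the proportional costs are nonnegative. It is bounded above by the cost of $\xi\equiv0$, whose state $X^0$ has bounded second moments. Hence $\varlimsup_TY_T\ge0$ for every $\epsilon$, with no convergence needed. This is exactly the inequality $\varlimsup_n(a_n-b_n)\ge\varlimsup_na_n-\varlimsup_nb_n$ the paper uses.

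The remedy you sketch, a subsequence along which $A_T$ realizes its $\varlimsup$, would not work anyway: $\bfJ(\xi^\epsilon,\theta,\lambda)$ may be realized along a different subsequence. Insert the subadditivity line and the rest of your proof goes through unchanged.
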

\begin{proof}
To simplify the notation, we omit the dependence on $(\theta,\lambda)$ in the optimal control $\xi^\star$, and we set $h(x) \coloneqq l(x,\theta) +\lambda(\theta- x)$.
For any $\xi \in \cB$, $\epsilon \in (0,\frac{1}{2}]$, set $\xi^\epsilon = \epsilon \xi+ (1-\epsilon)\xi^{\star} = \xi^{\star} + \epsilon (\xi - \xi^{\star})$.
We define
\begin{multline*}
    f(\epsilon,T) = \frac{1}{\epsilon}\bigg(  \frac{1}{T}\E\left[ \int_0^T h(X^{\xi^\epsilon}_t) dt + K_+ \xi^{\epsilon,+}_T + K_- \xi^{\epsilon,-}_T \right]
    -  \frac{1}{T}\E\left[ \int_0^T h(X^{\xi^{\star}}_t)dt + K_+ \xi^{\star,+}_T + K_- \xi^{\star,-}_T\right]  \bigg).
\end{multline*}
We notice that $X^{\xi}_t - X^{\xi^\star}_t = \frac{1}{\epsilon}(X^{\xi^\epsilon}_t - X^{\xi^\star}_t)$ for any $\epsilon \in (0,\frac{1}{2}]$.
Then, for any $\epsilon \in (0,\frac{1}{2}]$ and $T > 0$, it holds
\begin{multline*}
    f(\epsilon,T) \\
    = \frac{1}{T} \left( \E\left[ \int_0^T \Big( \int_0^1 h_{x}(X^{\xi^\star}_t + \epsilon\tau (X^{\xi}_t - X^{\xi^\star}_t))d\tau  \Big) (X^{\xi}_t - X^{\xi^\star}_t) dt +K_+(\xi^{+}_T - \xi^{\star,+}_T) +K_-(\xi^{-}_T - \xi^{\star,-}_T)  \right] \right),
\end{multline*}
which implies
\begin{equation}\label{first_order:dominated} 
    \lim_{\epsilon \downarrow 0} f(\epsilon,T) = \frac{1}{T} \E\left[ \int_0^T h_{x}(X^{\xi^\star}_t) (X^{\xi}_t - X^{\xi^\star}_t) dt +K_+(\xi^{+}_T - \xi^{\star,+}_T) +K_-(\xi^{-}_T - \xi^{\star,-}_T) \right]
\end{equation}
uniformly in $T$.
Indeed, by exploiting the linearity of $h_x(x)$ in $x$, we have
\begin{multline*}
    f(\epsilon,T ) - \frac{1}{T} \E\left[ \int_0^T h_{x}(X^{\xi^\star}_t) (X^{\xi}_t - X^{\xi^\star}_t) dt +K_+(\xi^{+}_T - \xi^{\star,+}_T) +K_-(\xi^{-}_T - \xi^{\star,-}_T) \right]  \\
    = \frac{1}{T} \E\left[ \int_0^T (\phi + \rho) (X^{\xi}_t - X^{\xi^\star}_t) \int_0^1 \big( \epsilon\tau (X^{\xi}_t - X^{\xi^\star}_t) \big) d\tau dt  \right] = \frac{1}{2T} \E\left[ \int_0^T  (\phi + \rho) (X^{\xi}_t - X^{\xi^\star}_t)^2 dt  \right] \epsilon.
\end{multline*}
Then, it follows
\begin{multline}\label{lemma:first_order_condition:uniform_estimate}
    \lim_{\epsilon \downarrow 0} \sup_{T \geq 0} \left\vert f(\epsilon,T ) - \frac{1}{T} \E\left[ \int_0^T h_{x}(X^{\xi^\star}_t) (X^{\xi}_t - X^{\xi^\star}_t) dt +K_+(\xi^{+}_T - \xi^{\star,+}_T) +K_-(\xi^{-}_T - \xi^{\star,-}_T) \right] \right\vert \\
    \leq  (\phi + \rho) \sup_{T \geq 0} \frac{1}{T} \E\left[ \int_0^T  \vert X^{\xi^\star}_t \vert^2 +\vert X^{\xi}_t \vert^2 dt  \right] \lim_{\epsilon \downarrow 0} \epsilon  = 0
\end{multline}
where we exploited the integrability requirement \eqref{eq:integrability_control}.
On the other hand, by taking the limit with respect to $T$, it holds
\begin{align*}
& \varlimsup_{T \uparrow \infty} f(\epsilon,T) \geq \frac{1}{\epsilon}\left(  \varlimsup_{T \uparrow \infty}\frac{1}{T}\E\left[ \int_0^T h(X^{\xi^\epsilon}_t) dt + K_+ \xi^{\epsilon,+}_T + K_- \xi^{\epsilon,-}_T \right] \right.\\
& \left. - \varlimsup_{T \uparrow \infty}\frac{1}{T}\E\left[ \int_0^T h(X^{\xi^\star}_t)dt + K_+ \xi^{\star,+}_T + K_- \xi^{\star,-}_T \right] \right) = \frac{1}{\epsilon} ( \bfJ(\xi^\epsilon,\theta,\lambda) - \bfJ(\xi^{\star},\theta,\lambda)) \geq 0,    
\end{align*}
by using the inequality $\varlimsup_n (a_n - b_n) \geq \varlimsup_n a_n - \varlimsup_n b_n $ and by optimality, for any $\epsilon \in (0,\frac{1}{2}]$.
By employing \cite[Lemma 7.5]{cannerozzi2024cooperation} to exchange the order of the limit and limsup, we get 
\begin{multline*}
    0 \leq \lim_{\epsilon \downarrow 0} \varlimsup_{T \uparrow \infty} f(\epsilon,T) \leq \varlimsup_{T \uparrow \infty} \lim_{\epsilon \downarrow 0}  f(\epsilon,T) \\    
    = \varlimsup_{T \uparrow \infty} \frac{1}{T} \E\left[ \int_0^T h_{x}(X^{\xi^\star}_t) (X^{\xi}_t - X^{\xi^\star}_t) dt +K_+(\xi^{+}_T - \xi^{\star,+}_T) +K_-(\xi^{-}_T - \xi^{\star,-}_T) \right] . 
\end{multline*}
This concludes the proof.
\end{proof}

We are ready to prove Theorem \ref{thm:sufficient_condition}.
\begin{proof}[Proof of Theorem \ref{thm:sufficient_condition}]
Let $\xi^\star$ be as in the statement of the theorem.
Let $\xi \in \cB_{mfc}$ be any other admissible control, satisfying the integrability condition \eqref{eq:integrability_control}.
By \eqref{eq:sufficient_condition:assumption_limit}, relying on the inequality $\varlimsup_{n}a_n - \varliminf_{n}b_n \leq \varliminf_{n}(a_n-b_n)$ and on the joint convexity of $(x,\theta) \mapsto l(x,\theta)$, we get
\begin{equation}\label{eq:thm:sufficient_condition:bound1}
\begin{aligned}
    & \bfJ^{mfc} (\xi^{\star}) - \bfJ^{mfc}(\xi) \leq \varliminf_{T \to \infty} \frac{1}{T}\E\bigg[ \int_0^T \big( l(X^{\xi^{\star}}_t,\theta^{\xi^{\star}}) - l(X^{\xi}_t,\theta^\xi) \big) + K_+ (\xi^{\star,+}_T - \xi^{+}_T) + K_- (\xi^{\star,-}_T - \xi^{-}_T)   \bigg] \\
    & \leq \varliminf_{T \to \infty} \frac{1}{T}\E\bigg[ \int_0^T \big( l_x(X^{\xi^{\star}}_t,\theta^{\xi^{\star}})(X^{\xi^{\star}}_t - X^{\xi}_t) + l_\theta(X^{\xi^{\star}}_t,\theta^{\xi^{\star}})(\theta^{\xi^{\star}} - \theta^{\xi}) \big) + K_+ (\xi^{\star,+}_T - \xi^{+}_T) + K_- (\xi^{\star,-}_T - \xi^{-}_T)   \bigg] \\
    & \leq \varliminf_{T \to \infty} \frac{1}{T}\E\left[ \int_0^T l_x(X^{\xi^{\star}}_t,\theta^{\xi^{\star}})(X^{\xi^{\star}}_t - X^{\xi}_t)  + K_+ (\xi^{\star,+}_T - \xi^{+}_T) + K_- (\xi^{\star,-}_T - \xi^{-}_T)   \right] \\
    & \;\; + \varlimsup_{T \uparrow \infty}\frac{1}{T}\E\left[\int_0^T l_\theta(X^{\xi^{\star}}_t,\theta^{\xi^{\star}}) dt \right](\theta^{\xi^{\star}} - \theta^{\xi}).
\end{aligned}
\end{equation}
By exploiting \eqref{mfg:cons:lambda}, we have
\begin{equation*}
    \lim_{T \to \infty}\frac{1}{T}\E\left[ \int_0^T l_\theta(X^{\xi^{\star}}_t,\theta^{\xi^{\star}})dt \right] = -\lambda^\star.
\end{equation*}
As $\xi^{\star}$ and $\xi$ belong to $\cB^{mfc}$, the processes $X^{\xi^{\star}}$ and $X^{\xi}$ are ergodic, so that we have
\begin{equation}\label{eq:cons_lambda:implication}
    \varlimsup_{T \uparrow \infty}\frac{1}{T}\E\left[\int_0^T l_\theta(X^{\xi^{\star}}_t,\theta^{\xi^{\star}}) dt \right](\theta^{\xi^{\star}} - \theta^{\xi}) = -\lambda^\star \lim_{T \to \infty}\frac{1}{T}\E\left[\int_0^T (X^{\xi^{\star}}_t-X^{\xi}) dt \right].
\end{equation}
By plugging \eqref{eq:cons_lambda:implication} in the last line of \eqref{eq:thm:sufficient_condition:bound1}, we deduce
\begin{equation}\label{eq:thm:sufficient_condition:bound2}
\begin{aligned}
    & \bfJ^{mfc}(\xi^{\star}) - \bfJ^{mfc}(\xi) \leq \varliminf_{T \to \infty} \frac{1}{T}\E\left[ \int_0^T l_x(X^{\xi^{\star}}_t,\theta^{\xi^{\star}})(X^{\xi^{\star}}_t - X^{\xi}_t)  + K_+ (\xi^{\star,+}_T - \xi^{+}_T) + K_- (\xi^{\star,-}_T - \xi^{-}_T)   \right] \\
    & \;\; + \lim_{T \to \infty}\frac{1}{T}\E\left[\int_0^T -\lambda^\star (X^{\xi^{\star}}_t-X^{\xi}) dt \right] \\
    & \leq \varliminf_{T \to \infty} \frac{1}{T}\E\left[ \int_0^T \big( l_x(X^{\xi^{\star}}_t,\theta^{\xi^{\star}}) - \lambda^\star\big) (X^{\xi^{\star}}_t - X^{\xi}_t)  + K_+ (\xi^{\star,+}_T - \xi^{+}_T) + K_- (\xi^{\star,-}_T - \xi^{-}_T)   \right].
\end{aligned}
\end{equation}
By \eqref{eq:optimality:first_order_condition}, the right-hand side of \eqref{eq:thm:sufficient_condition:bound2} is always non-positive, which implies that $\xi^{\star}$ is optimal.
\end{proof}

Finally, we prove that the potential stationary MFG admits a solution for which \eqref{eq:sufficient_condition:assumption_limit} and \eqref{eq:integrability_control} hold.
By Theorem \ref{thm:sufficient_condition}, this yields the optimality of the control $\xi^\star$ for the stationary MFC problem.

\begin{proof}[Proof of Theorem \ref{thm:mfg:existence}]
For any $(\theta,\lambda) \in \R$, let $\xi^{\star}(\theta,\lambda)$ be given by Proposition \ref{prop:ergodic_ctrl:solution} and recall that it maximizes the payoff $\bfJ(\xi,\theta,\lambda)$ over $\xi \in \cB$.
Moreover, recall that the diffusion $X^{\xi^{\star}(\theta,\lambda)}$ admits a unique stationary distribution $p^{\theta,\lambda}_\infty \in \cP(\R)$, given by \eqref{eq:stationary_distribution}.
Define the map $G : \R^2 \to \R^2$
\begin{equation}\label{eq:map_fixed_point}
    G(\theta,\lambda) = \left( \int_{\R} x \, p^{\theta,\lambda}_\infty(dx), - \int_{\R} l_\theta(x,\theta) \, p^{\theta,\lambda}_\infty(dx) \right),
\end{equation}
and denote by $G_1$ and $G_2$ its first and second components.
Notice that $(\xi^{\star}(\theta^\star,\lambda),\theta^\star,\lambda^\star)$ is an equilibrium for the potential stationary MFG if and only if $(\theta^\star,\lambda^\star)$ is fixed-point of the map $G$.
Thus, we show that there exists a fixed-point of the map $G$.

\smallskip
Define the map 
\begin{equation}\label{eq:lambda_of_theta}
    \lambda^\star(\theta) \coloneqq \psi(\bar{\theta} - \theta) 
\end{equation}
and consider $G_1(\theta,\lambda^\star(\theta))$.
We search for a fixed-point of $G_1(\theta,\lambda^\star(\theta)$.
Indeed, suppose that, $\theta^\star$ is a fixed-point of $G_1(\theta,\lambda^\star(\theta))$ and set $\lambda^\star \coloneqq \lambda^\star(\theta^\star)$.
Then, the pair $(\theta^\star,\lambda^\star)$ is a fixed-point of $G$, as it holds
\begin{align*}
    G_2(\theta^\star,\lambda^\star) & = -\int_{\R} l_\theta(x,\theta^\star)p^{\theta^\star,\lambda^\star}_\infty(dx) \\
    & = -\int_{\R} \left( -\phi(x-\theta^\star) + \psi(\theta^\star - \bar{\theta}) \right)  \, p^{\theta^\star,\lambda^\star}_\infty(dx) = \lambda^\star(\theta^\star).
\end{align*}
We turn our attention to finding a fixed-point of $G_1(\theta,\lambda^\star(\theta))$.
Exploiting $\P ( a_+(\theta,\lambda) \leq X^{\xi^{\star}(\theta,\lambda)}_t \leq a_-(\theta,\lambda) ) = 1$ for any $t \geq 0$, we get
\begin{equation*}
    a_+(\theta,\lambda) \leq \varlimsup_{T \uparrow \infty} \frac{1}{T} \int_0^T \E[X^{\xi^{\star}(\theta,\lambda)}_t] dt \leq a_-(\theta,\lambda).
\end{equation*}
Exploiting the ergodicity of $X^{\xi^{\star}(\theta,\lambda^\star(\theta))}$, we deduce
\begin{equation}\label{eq:fixed_point:linear_bounds1}
    a_+(\theta,\lambda^\star(\theta)) \leq G_1(\theta,\lambda^\star(\theta)) \leq a_-(\theta,\lambda^\star(\theta)).
\end{equation}
We repeat the same reasoning as in the proof of Proposition \ref{prop:mfg:fixed_multiplier}:
We start by noticing that the map $\theta \mapsto G_1(\theta,\lambda^\star(\theta))$ is continuous, since $\lambda^\star(\theta)$ is continuous and so are the maps $a_\pm(\theta,\lambda)$.
Thus, it is enough to show that there exist $\theta_-$ such that $G_1(\theta_-,\lambda^\star(\theta_-)) - \theta_- < 0$ and $\theta_+$ such that $G_1(\theta_+,\lambda^\star(\theta_+)) - \theta_+ > 0$, with $\theta_+ < \theta_-$.
By continuity, this implies the existence of $\theta^\star \in [\theta_+,\theta_-]$ such that $G_1(\theta^\star,\lambda^\star(\theta^\star)) = \theta^\star$.
To do so, we exploit the bounds in \eqref{eq:fixed_point:bounds} and the differentiability of $\lambda^\star(\theta)$ to show that there exist there exist $\theta_-$ such that $a_-(\theta_-,\lambda^\star(\theta_-)) - \theta_- < 0$ and $\theta_+$ such that $a_+(\theta_+,\lambda^\star(\theta_+)) - \theta_+ > 0$.
Since $\lambda^\star(\theta)=\psi(\bar\theta-\theta)$, at points where $a_\pm$ is differentiable we have
\[
\frac{d}{d\theta}a_\pm(\theta,\lambda^\star(\theta))
=\partial_\theta a_\pm(\theta,\lambda^\star(\theta))-\psi\,\partial_\lambda a_\pm(\theta,\lambda^\star(\theta))
\le 2m-c\psi.
\]
Take now $\theta < 0$. We have
\begin{multline*}
    a_+(\theta,\lambda^\star(\theta)) =  a_+(0,\lambda^\star(0)) + \int_0^\theta \frac{d a_+(\theta',\lambda^\star(\theta'))}{d\theta} d\theta' = a_+(0,\lambda^\star(0)) - \int_\theta^0 \frac{d a_+(\theta',\lambda^\star(\theta'))}{d\theta} d\theta' \\
    \geq a_+(0,\lambda^\star(0)) + (2m - c\psi)\theta.
\end{multline*}
This implies
\[
a_+(\theta,\lambda^\star(\theta)) - \theta \geq (2m-1 - c\psi)\theta + a_+(0,\lambda^\star(0)) > 0.
\]
Since $\phi < \rho$ by Assumption \ref{ass:phi_rho}, we have $2m < 1$, the previous inequality is satisfied by $\theta_+ \leq \frac{a_+(0,\lambda^\star(0))}{1 - 2m + c\psi} \wedge 0$.
Analogously for $a_-(\theta,\lambda^\star(\theta))$, take $\theta > 0$ and consider the upper bound
\begin{multline*}
    a_-(\theta,\lambda^\star(\theta)) =  a_-(0,\lambda^\star(0)) + \int_0^\theta \frac{d a_-(\theta',\lambda^\star(\theta'))}{d\theta} d\theta' = a_-(0,\lambda^\star(0)) - \int_\theta^0 \frac{d a_-(\theta',\lambda^\star(\theta'))}{d\theta} d\theta' \\
    \leq a_-(0,\lambda^\star(0)) + (2m - c\psi)\theta.
\end{multline*}
so that $a_-(\theta,\lambda) - \theta < 0$ if $(1 - 2m +c\psi )\theta > a_-(0,\lambda^\star(0))$.
Since $2m < 1$, the previous inequality is satisfied by $\theta_- > \frac{a_-(0,\lambda^\star(0))}{1 - 2m + c\psi} \vee 0$.

Finally, we notice that $\xi^\star$ satisfies \eqref{eq:sufficient_condition:assumption_limit} by \eqref{eq:final_ctrl} and that condition \eqref{eq:integrability_control} is verified, as $X^{\xi^\star}$ is bounded by two constants, being the solution of the Skorohod reflection problem \eqref{eq:skorohod_reflection}.
\end{proof}

\section{Numerical illustrations}\label{sec:numerics}
In this section, we study the sensitivity of the reflection boundaries $a_\pm(\theta^\star,\lambda^\star)$ and of the parameters $(\theta^\star,\lambda^\star)$ of the potential stationary MFG at equilibrium, with respect to the parameters $\delta$, $\sigma$ and $\phi$.
By Theorems \ref{thm:sufficient_condition} and \ref{thm:mfg:existence}, this analysis provides also the sensitivity of the solution $\xi^\star$ of the stationary MFC problem in terms of the aforementioned parameters.

\subsection*{Numerics}
The first step is to solve for $a_\pm(\theta,\lambda)$, for any $(\theta,\lambda) \in \R^2$. To do so, we need to solve the one-dimensional free-boundary problem
\begin{equation}\label{eq:free_boundary:zero}
    \left\{ \begin{aligned}
        -\delta U(x,\theta,\lambda) -\delta x U_{x}(x,\theta,\lambda) + \frac{1}{2}\sigma^2 U_{xx}(x,\theta,\lambda) + (\rho + \phi)x - \rho\bar{x} -\phi\theta - \lambda = 0, \quad & a_+(\theta,\lambda) < x < a_-(\theta,\lambda), \\
        U(x,\theta,\lambda) = -K_+, \quad & x \leq a_+(\theta,\lambda), \\
        U(x,\theta,\lambda) = K_-, \quad & x \geq a_-(\theta,\lambda),
    \end{aligned} \right.
\end{equation}
numerically. We remark that, by Lemma \ref{lemma:Dynkin_game:solution}, a solution of the free-boundary problem exists.

\begin{lemma}\label{numerics:lemma:solutions}
The two fundamental solutions of the equation
\[
-\delta U(x) + \cL_{X^0}U(x) = 0
\]
are given by
\begin{equation}
\begin{aligned}
    \phi(x) = e^{\frac{\alpha x^2}{2}} F\Big ( \frac{\sqrt{\alpha} x}{2} \Big), \quad \psi(x) = e^{\frac{\alpha x^2}{2}} F\Big( -\frac{\sqrt{\alpha} x}{2} \Big)
\end{aligned}
\end{equation}
where
\[
\alpha = \frac{2\delta}{\sigma^2}, \quad F(x) = \operatorname{Erfc}(x) = \frac{2}{\sqrt{\pi}}\int_{x}^\infty e^{-t^2}dt 
\]
The particular solution of the equation
\begin{equation}\label{eq:numerics:ODE}
    -\delta U(x) + \cL_{X^0}U(x) + (\rho + \phi)x - \rho\bar{x} -\phi\theta -\lambda = 0
\end{equation}
is given by
\begin{equation}
    \bar{U}(x) = \frac{\rho + \phi}{2\delta}x - \frac{\rho \bar{x} + \phi\theta +\lambda}{\delta}.
\end{equation}
\end{lemma}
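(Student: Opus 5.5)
The plan is a direct verification, done in three steps. First reduce the homogeneous equation to a first-order one with the substitution $U(x)=e^{\frac{\alpha x^2}{2}}w(x)$. Then identify its two independent solutions through the complementary error function. Finally check the particular solution by an affine ansatz.

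\emph{Reduction.} Dividing $-\delta U+\cL_{X^0}U=0$ by $\sigma^2/2$ and writing $\alpha=\frac{2\delta}{\sigma^2}$ gives
\[
U''-\alpha xU'-\alpha U=0 .
\]
With $U=e^{\frac{\alpha x^2}{2}}w$ we have $U'=e^{\frac{\alpha x^2}{2}}(\alpha xw+w')$ and
\[
U''=e^{\frac{\alpha x^2}{2}}\bigl(w''+2\alpha xw'+\alpha w+\alpha^2x^2w\bigr).
\]
Substituting, the zeroth-order terms $\alpha w$ and $\alpha^2x^2w$ cancel, and the equation becomes $w''+\alpha xw'=0$.

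\emph{The two fundamental solutions.} The reduced equation gives $w'=C e^{-\frac{\alpha x^2}{2}}$, so $w$ is an affine function of $\int_x^\infty e^{-\alpha t^2/2}\,dt$. Changing variables $t=s/k$ turns this into a multiple of $\operatorname{Erfc}(kx)$, where the scaling $k$ must satisfy $k^2=\alpha/2$. I will take $w(x)=\operatorname{Erfc}(\pm kx)$, since $\operatorname{Erfc}(-kx)=2-\operatorname{Erfc}(kx)$ is also a solution. I would carefully reconcile this scaling with the normalization $\frac{\sqrt{\alpha}x}{2}$ written in the statement. If the constants in $\alpha$ or in the argument have been rescaled, the same computation goes through with the matching $k$. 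I expect this bookkeeping to be the only real obstacle.

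Linear independence of the two solutions follows from their asymptotics. As $x\to+\infty$, $\operatorname{Erfc}(kx)\to 0$ while $\operatorname{Erfc}(-kx)\to 2$, so the first solution (the statement's $\phi$) is decreasing and the second (the statement's $\psi$) is increasing. Independence can also be read off from the Wronskian, which is a nonzero multiple of $e^{\frac{\alpha x^2}{2}}$. These are the usual decreasing and increasing fundamental solutions of the Ornstein--Uhlenbeck resolvent equation.

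\emph{Particular solution.} Because the forcing term $(\rho+\phi)x-\rho\bar x-\phi\theta-\lambda$ is affine, I try $\bar U(x)=Ax+B$, for which $\bar U''=0$. Plugging in gives
\[
-\delta(Ax+B)-\delta xA+(\rho+\phi)x-\rho\bar x-\phi\theta-\lambda=0 .
\]
Matching the coefficient of $x$ yields $A=\frac{\rho+\phi}{2\delta}$. Matching the constant term yields $B=-\frac{\rho\bar x+\phi\theta+\lambda}{\delta}$. This is exactly $\bar U$ in the statement.
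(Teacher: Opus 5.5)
Your route is the ``straightforward computation'' the paper omits, and your algebra is right throughout. This covers the reduction to $U''-\alpha xU'-\alpha U=0$, the substitution leading to $w''+\alpha xw'=0$, the Wronskian $\frac{4k}{\sqrt{\pi}}e^{\alpha x^2/2}$ (consistent with Abel's formula), and the affine ansatz, which reproduces $\bar U$ exactly.

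The gap is the step you postpone as ``bookkeeping''. It cannot be reconciled, because the lemma as printed is false. For $w(x)=\operatorname{Erfc}(kx)$ one gets
\[
w''(x)+\alpha x\,w'(x)=\frac{2kx}{\sqrt{\pi}}\,e^{-k^2x^2}\bigl(2k^2-\alpha\bigr),
\]
which vanishes identically only when $k^2=\alpha/2$. With the statement's $k=\sqrt{\alpha}/2$ the bracket equals $-\alpha/2\neq0$. Both $\alpha=2\delta/\sigma^2$ and $F=\operatorname{Erfc}$ are fixed in the statement, and every solution has the form $e^{\alpha x^2/2}w$ with $w$ affine in $\int_0^x e^{-\alpha t^2/2}\,dt$. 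So no rescaling rescues $e^{\alpha x^2/2}\operatorname{Erfc}(\pm\sqrt{\alpha}x/2)$.

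What your argument actually proves is the corrected statement with argument $\sqrt{\alpha/2}\,x=\sqrt{\delta}\,x/\sigma$, i.e.\ $\phi(x)=e^{\delta x^2/\sigma^2}\operatorname{Erfc}(\sqrt{\delta}\,x/\sigma)$ and $\psi(x)=\phi(-x)$. You should state this correction explicitly rather than leave it as an anticipated obstacle. The particular solution $\bar U$ is unaffected.

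A smaller point: $\operatorname{Erfc}(kx)\to0$ does not by itself make $\phi$ decreasing, since it is multiplied by $e^{\alpha x^2/2}\to\infty$. The product tends to $0$ only because of the exact cancellation $k^2=\alpha/2$; with $k=\sqrt{\alpha}/2$ it would blow up, which is another sign of the typo. Strict decrease follows from $\phi'(x)=\alpha x\,e^{k^2x^2}\operatorname{Erfc}(kx)-\frac{2k}{\sqrt{\pi}}$ together with the bound $\operatorname{Erfc}(y)<e^{-y^2}/(y\sqrt{\pi})$ for $y>0$. Monotonicity is not part of the claim, however, and your Wronskian already gives linear independence.
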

The proof of Lemma \ref{numerics:lemma:solutions} follows from straightforward computations and it is therefore omitted.
Thanks to Lemma \ref{numerics:lemma:solutions}, any solution to \eqref{eq:numerics:ODE} is given by $A \phi(x) + B \psi(x) + \bar{U}(x)$, with $A$ and $B$ real constants.
Thus, we only need to find such constants by imposing the smooth fit condition $U \in \dC^1(\R)$ in \eqref{eq:free_boundary:zero} at the points $a_\pm(\theta,\lambda)$.
This gives a pair of equations that uniquely determine the free-boundaries $a_\pm(\theta,\lambda)$.

\begin{lemma}
The free-boundaries $a_\pm(\theta,\lambda)$ are given by the unique solution of the following system of equations:
\begin{equation}
\left\{ \begin{aligned}
    & \frac{\psi(a_+)\, \bar{U}_x(a_+) - \psi_x(a_+)\, \big(K_+ + \bar{U}(a_+)\big)}{\phi(a_+)\, \psi_x(a_+) - \psi(a_+)\, \phi_x(a_+)} = \frac{\psi_x(a_-)\, \big(K_- - \bar{U}(a_-)\big) + \psi(a_-)\, \bar{U}_x(a_-)}{\phi(a_-)\, \psi_x(a_-) - \psi(a_-)\, \phi_x(a_-)}, \\
    & \frac{\phi_x(a_+)\, \big(K_+ + \bar{U}(a_+)\big) - \phi(a_+)\, \bar{U}_x(a_+)}{\phi(a_+)\, \psi_x(a_+) - \psi(a_+)\, \phi_x(a_+)} = \frac{-\phi_x(a_-)\, \big(K_- - \bar{U}(a_-)\big) - \phi(a_-)\, \bar{U}_x(a_-)}{\phi(a_-)\, \psi_x(a_-) - \psi(a_-)\, \phi_x(a_-)}.
\end{aligned} \right. 
\end{equation}
\end{lemma}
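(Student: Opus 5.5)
The plan is to write the value of the Dynkin game explicitly in the continuation region and impose the boundary and smooth-fit conditions at the two free boundaries. (Here $\phi,\psi$ denote the fundamental solutions of Lemma \ref{numerics:lemma:solutions}, not the cost parameters.)

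By Lemma \ref{lemma:Dynkin_game:solution}, $U(\cdot,\theta,\lambda)\in \dC^1(\R)$ solves \eqref{eq:free_boundary:zero}. On the interval $(a_+,a_-)$ it therefore solves \eqref{eq:numerics:ODE}. By Lemma \ref{numerics:lemma:solutions} there are constants $A,B$ with
\[
U(x)=A\phi(x)+B\psi(x)+\bar U(x), \qquad x\in(a_+,a_-).
\]
Since $U\in\dC^1(\R)$ and $U$ is constant on each stopping region, continuity and smooth fit give four conditions:
\[
U(a_+)=-K_+,\quad U_x(a_+)=0,\quad U(a_-)=K_-,\quad U_x(a_-)=0.
\]

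The conditions at $a_+$ form a $2\times 2$ linear system in $(A,B)$:
\[
A\phi(a_+)+B\psi(a_+)=-K_+-\bar U(a_+),\qquad A\phi_x(a_+)+B\psi_x(a_+)=-\bar U_x(a_+).
\]
Its determinant is the Wronskian $W(a_+)=\phi\psi_x-\psi\phi_x$ evaluated at $a_+$. This is nonzero because $\phi,\psi$ are two independent solutions of a regular second-order ODE; concretely, Abel's formula gives $W(x)=W(0)e^{\alpha x^2/2}$ up to a nonzero constant, and one checks $W(0)\neq 0$ from the erfc representation. Cramer's rule then gives
\[
A=\frac{\psi(a_+)\bar U_x(a_+)-\psi_x(a_+)(K_++\bar U(a_+))}{W(a_+)},\qquad B=\frac{\phi_x(a_+)(K_++\bar U(a_+))-\phi(a_+)\bar U_x(a_+)}{W(a_+)}.
\]
The conditions at $a_-$ give, in the same way,
\[
A=\frac{\psi_x(a_-)(K_--\bar U(a_-))+\psi(a_-)\bar U_x(a_-)}{W(a_-)},\qquad B=\frac{-\phi_x(a_-)(K_--\bar U(a_-))-\phi(a_-)\bar U_x(a_-)}{W(a_-)}.
\]
Since $(A,B)$ is a single pair of constants, equating the two expressions for $A$ and the two for $B$ yields exactly the stated system. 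This shows that the true boundaries $a_\pm(\theta,\lambda)$ from \eqref{eq:barriers} solve it, which settles existence.

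Uniqueness is the main obstacle. I would argue by verification. Take any solution $(\hat a_+,\hat a_-)$ of the system with $\hat a_+<\hat a_-$. Define $\hat U$ to be $-K_+$ on $(-\infty,\hat a_+]$, $K_-$ on $[\hat a_-,\infty)$, and $A\phi+B\psi+\bar U$ in between, with $(A,B)$ given by the common values above. By construction $\hat U\in\dC^1\cap\dC^2$ away from the two points. I would then show:

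1. $-K_+\le \hat U\le K_-$ on $(\hat a_+,\hat a_-)$.
2. The sign conditions \eqref{eq:U_signs:positive}--\eqref{eq:U_signs:negative} hold for $\hat U$. These reduce to $\hat a_+\le x_+(\theta,\lambda)$ and $\hat a_-\ge x_-(\theta,\lambda)$, as in Lemma \ref{lemma:theta:boundary_separeted}.

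With these in hand, applying Itô's formula to $e^{-\delta t}\hat U(X_t)$ and stopping at the hitting times of $\{\hat a_+,\hat a_-\}$ identifies $\hat U$ with the game value $U$, as in \cite[Theorem 2.1]{peskir2008optimal_stopping_games}. The barriers then coincide with $a_\pm(\theta,\lambda)$ by \eqref{eq:barriers}.

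The delicate part is establishing the bounds in step 1 and the boundary localization in step 2 for an arbitrary solution of the system. I would try to obtain them from monotonicity of $\hat U$: the function $\hat U_x$ vanishes at both endpoints and satisfies the differentiated ODE, so a maximum-principle argument applied to $\hat U_x$ should force $\hat U_x>0$ inside. If this proves too hard, a fallback is to define uniqueness within the class of admissible pairs, i.e. those with $\hat a_+<\hat a_-$ satisfying these inequalities, and state the result in that form.
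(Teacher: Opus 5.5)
Your derivation of the system is exactly the paper's argument. The paper's proof consists only of imposing $U(a_\pm)=\mp K_\pm$ and $U_x(a_\pm)=0$ on $A\phi+B\psi+\bar U$, and your Cramer's-rule expressions for $(A,B)$ at $a_+$ and at $a_-$ reproduce both equations term by term. (Incidentally, for $\phi,\psi$ to solve $-\delta U+\cL_{X^0}U=0$, the argument of $\operatorname{Erfc}$ in Lemma \ref{numerics:lemma:solutions} must be $\pm\sqrt{\alpha/2}\,x$, not $\pm\sqrt{\alpha}\,x/2$. Your Abel-formula argument for $W\neq 0$ is unaffected.)

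The paper gives no argument for the word ``unique'', so your verification route goes beyond it. As written, however, this is where your proof has a genuine gap. Steps 1 and 2 are only announced. The fallback would prove a weaker statement than the one claimed. And you only consider solutions with $\hat a_+<\hat a_-$, although the system does not encode the ordering. It only says that a single global solution $\hat u=A\phi+B\psi+\bar U$ of \eqref{eq:numerics:ODE} has critical points at $\hat a_\pm$ with $\hat u(\hat a_\pm)=\mp K_\pm$. Coincident points are excluded trivially, since they would force $-K_+=K_-$; the reversed order is not.

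The maximum-principle idea you suggest closes all of this in a few lines. Differentiating \eqref{eq:numerics:ODE}, $w=\hat u_x$ solves on the whole line
\[
\tfrac12\sigma^2 w_{xx}-\delta x\, w_x-2\delta w=-(\rho+\phi)<0,
\]
where $\rho,\phi$ are the cost parameters. Suppose $w$ vanishes at $y<z$ and $w\le 0$ somewhere in $(y,z)$. At an interior minimiser $x_0$ of $w$ on $[y,z]$ one has $w_x(x_0)=0$, $w_{xx}(x_0)\ge 0$ and $w(x_0)\le 0$, so the left-hand side is $\ge 0$, a contradiction. Hence $w>0$ on $(y,z)$, which gives three facts.

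\begin{enumerate}[label=(\roman*)]
\item $\hat a_+>\hat a_-$ is impossible, since $\hat u$ would then increase strictly from $\hat u(\hat a_-)=K_-$ to $\hat u(\hat a_+)=-K_+$.
\item $\hat U$ is strictly increasing on $(\hat a_+,\hat a_-)$, so $-K_+<\hat U<K_-$ there. This is your step 1.
\item $\hat u_{xx}(\hat a_-)\le 0\le \hat u_{xx}(\hat a_+)$. Evaluating \eqref{eq:numerics:ODE} at $\hat a_\pm$, where $\hat u_x=0$, yields
\[
l_x(\hat a_-,\theta)-\lambda-\delta K_-=-\tfrac12\sigma^2\hat u_{xx}(\hat a_-)\ge 0,\qquad l_x(\hat a_+,\theta)-\lambda+\delta K_+=-\tfrac12\sigma^2\hat u_{xx}(\hat a_+)\le 0 .
\]
Since $x\mapsto l_x(x,\theta)$ is increasing, these are exactly \eqref{eq:U_signs:positive}--\eqref{eq:U_signs:negative} on the two stopping regions. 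This is your step 2, i.e.\ $\hat a_+\le x_+(\theta,\lambda)$ and $\hat a_-\ge x_-(\theta,\lambda)$.
\end{enumerate}

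With this in hand, your verification step is standard. Apply It\^o's formula to $e^{-\delta t}\hat U(X_t)$; $\hat U$ is $\dC^1$ with piecewise continuous second derivative. The stochastic integral is a true martingale because $\hat U_x$ is bounded, and $e^{-\delta T}\hat U(X_T)\to 0$ because $\hat U$ is bounded. Let $\tau^\star,\eta^\star$ be the hitting times of $[\hat a_-,\infty)$ and $(-\infty,\hat a_+]$. You obtain $\hat U(x)\le M^{\theta,\lambda}_x(\tau,\eta^\star)$ and $\hat U(x)\ge M^{\theta,\lambda}_x(\tau^\star,\eta)$ for all stopping times $\tau,\eta$, so $\hat U=U(\cdot,\theta,\lambda)$. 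Then (ii) together with \eqref{eq:barriers} gives $\hat a_\pm=a_\pm(\theta,\lambda)$, which is the uniqueness claim.
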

The proof follows from imposing the smooth-fit conditions $U(a_\pm) = \mp K_\pm $ and $U_x(a_\pm) = 0$ at $a_\pm(\theta,\lambda)$, and it is therefore omitted.

\smallskip
The next step is to find the equilibrium parameters $(\theta^\star,\lambda^\star)$.
By the proof of Theorem \ref{thm:mfg:existence}, $\lambda^\star = \psi(\bar{\theta} - \theta^\star)$.
Hence, it is enough to find numerically the fixed-point of the map
\begin{equation}\label{eq:numerics:fixed_point}
    G(\theta) = \left( \int_{a_+(\theta,\psi(\bar{\theta} - \theta)) }^{ a_-(\theta,\psi(\bar{\theta} - \theta))  } e^{-\frac{\delta}{\sigma^2} y^2 } dy  \right)^{-1} \int_{a_+(\theta,\psi(\bar{\theta} - \theta)) }^{ a_-(\theta,\psi(\bar{\theta} - \theta))  } x e^{-\frac{\delta}{\sigma^2} x^2 } dx.
\end{equation}

\subsection*{Illustrations}

Figure~\ref{fig:delta} shows the sensitivity of the optimal reflection boundaries
$a_\pm(\theta^\star,\lambda^\star)$ and of the equilibrium parameter $\theta^\star$ with respect to the mean-reversion speed~$\delta$.
The parameter $\lambda^\star$ is not reported, as it exhibits the opposite behavior of $\theta^\star$, being given by $\lambda^\star=\psi(\bar{\theta}-\theta^\star)$.
A larger value of $\delta$ strengthens the mean-reverting drift of the uncontrolled Ornstein-Uhlenbeck dynamics, reducing equilibrium fluctuations and the need for boundary intervention.
Accordingly, the equilibrium inaction region expands, with $a_+(\theta^\star,\lambda^\star)$ decreasing and $a_-(\theta^\star,\lambda^\star)$ increasing.
For smaller values of $\delta$, weaker mean reversion leads to larger deviations from the mean, resulting in more frequent equilibrium intervention and reflection boundaries closer to the origin.
The right panel shows that the equilibrium mean $\theta^\star$ moves toward zero as $\delta$ increases, reflecting a stationary distribution increasingly concentrated around the natural mean of the dynamics.

Figure~\ref{fig:sigma} illustrates the sensitivity of the equilibrium with respect to the volatility coefficient $\sigma$.
Higher volatility amplifies diffusion-driven fluctuations, increasing the dispersion of the stationary distribution induced by equilibrium controls.
This raises the cost of frequent intervention and leads to an expansion of the inaction region as the reflection boundaries move outward.
The right panel shows that the equilibrium mean $\theta^\star$ increases with $\sigma$ and converges to an asymptotic level for large values of $\sigma$, reflecting an adjustment of the consistency condition to higher dispersion, with further increases in $\sigma$ mainly affecting variance rather than the location of the stationary distribution.

Figure~\ref{fig:phi} illustrates the sensitivity of the equilibrium with respect to the interaction parameter $\phi$, under the assumption $\phi<\rho$ (see Assumption~\ref{ass:phi_rho}).
An increase in $\phi$ strengthens the coupling between individual states and the equilibrium mean, making deviations from $\theta^\star$ more costly.
As a consequence, best responses at equilibrium enforce tighter stabilization around the mean, leading to a contraction of the inaction region as the reflection boundaries move inward.
The right panel shows that the equilibrium mean $\theta^\star$ increases with $\phi$, reflecting stronger coordination incentives and closer alignment with the aggregate behavior.

\begin{figure}[!ht]
    \centering
    \includegraphics[width =0.8\textwidth]{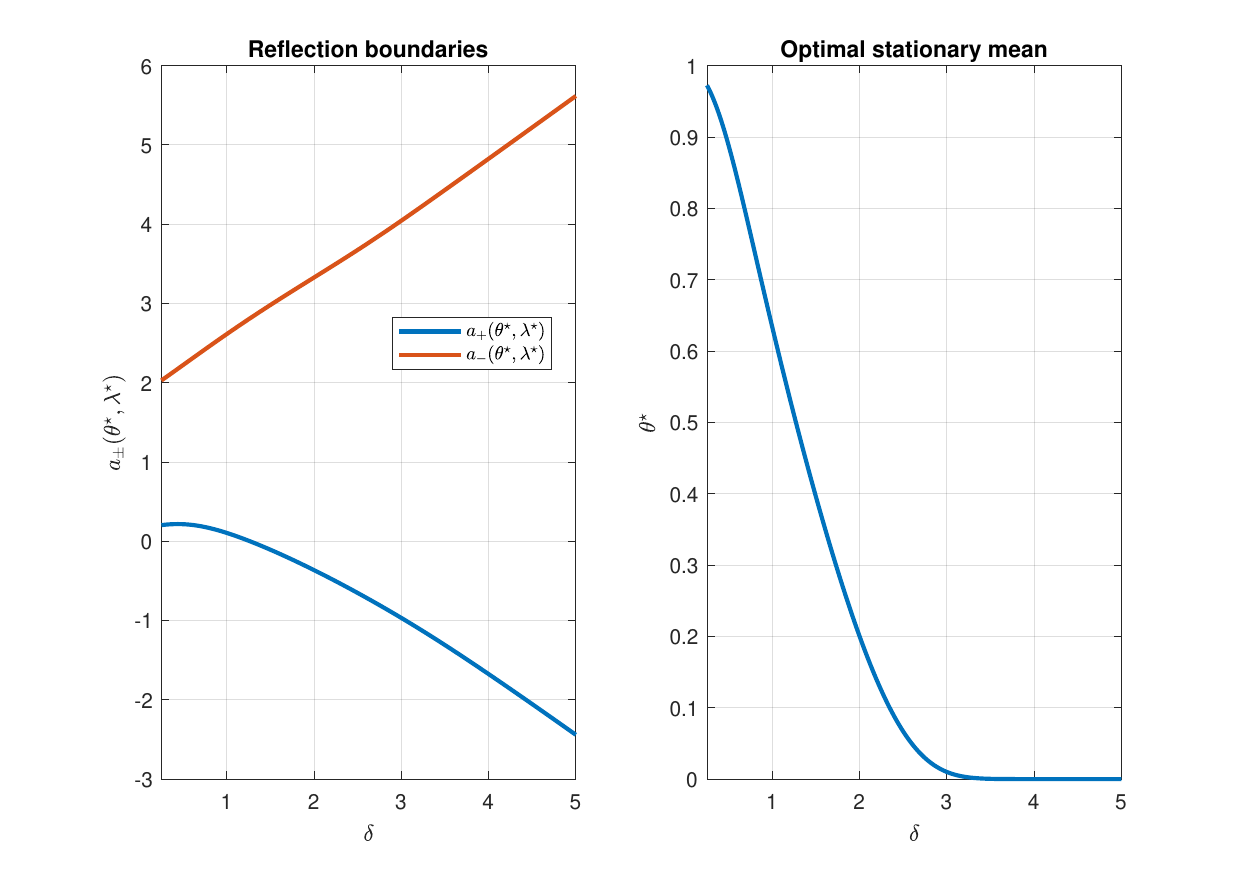}
    \caption{Sensitivity with respect to $\delta$. $\delta$ varies from $0.1$ to $5$.
    On the left panel, we show the optimal reflection boundaries $a_\pm(\theta^\star,\lambda^\star)$, whereas on the right panel we show the parameter $\theta^\star$ at equilibrium. 
    Here, $\sigma=1$, $\rho=1.5$, $\phi= 1$, $\psi = 0.5$, $\bar{x} = \bar{\theta} = 1$, $K_+ = K_- = 1$.}
    \label{fig:delta}
\end{figure}

\begin{figure}[!ht]
    \centering
    \includegraphics[width =0.8\textwidth]{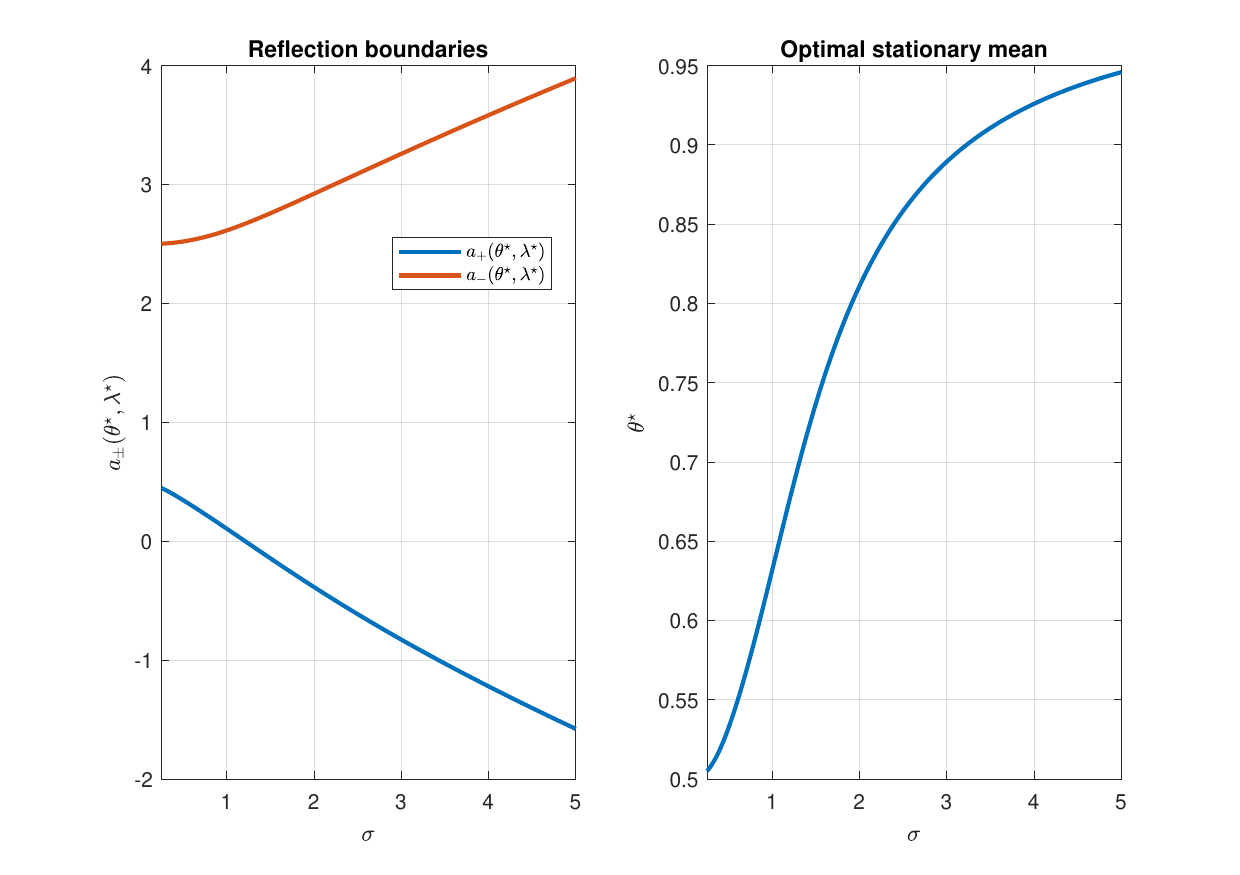}
    \caption{Sensitivity with respect to $\sigma$. $\sigma$ varies from $0.1$ to $4$.
    On the left panel, we show the optimal reflection boundaries $a_\pm(\theta^\star,\lambda^\star)$, whereas on the right panel we show the parameter $\theta^\star$ at equilibrium. 
    Here, $\delta=1$, $\rho=1.5$, $\phi= 1$, $\psi = 0.5$, $\bar{x} = \bar{\theta} = 1$, $K_+ = K_- = 1$.}
    \label{fig:sigma}
\end{figure}

\begin{figure}[!ht]
    \centering
    \includegraphics[width =0.8\textwidth]{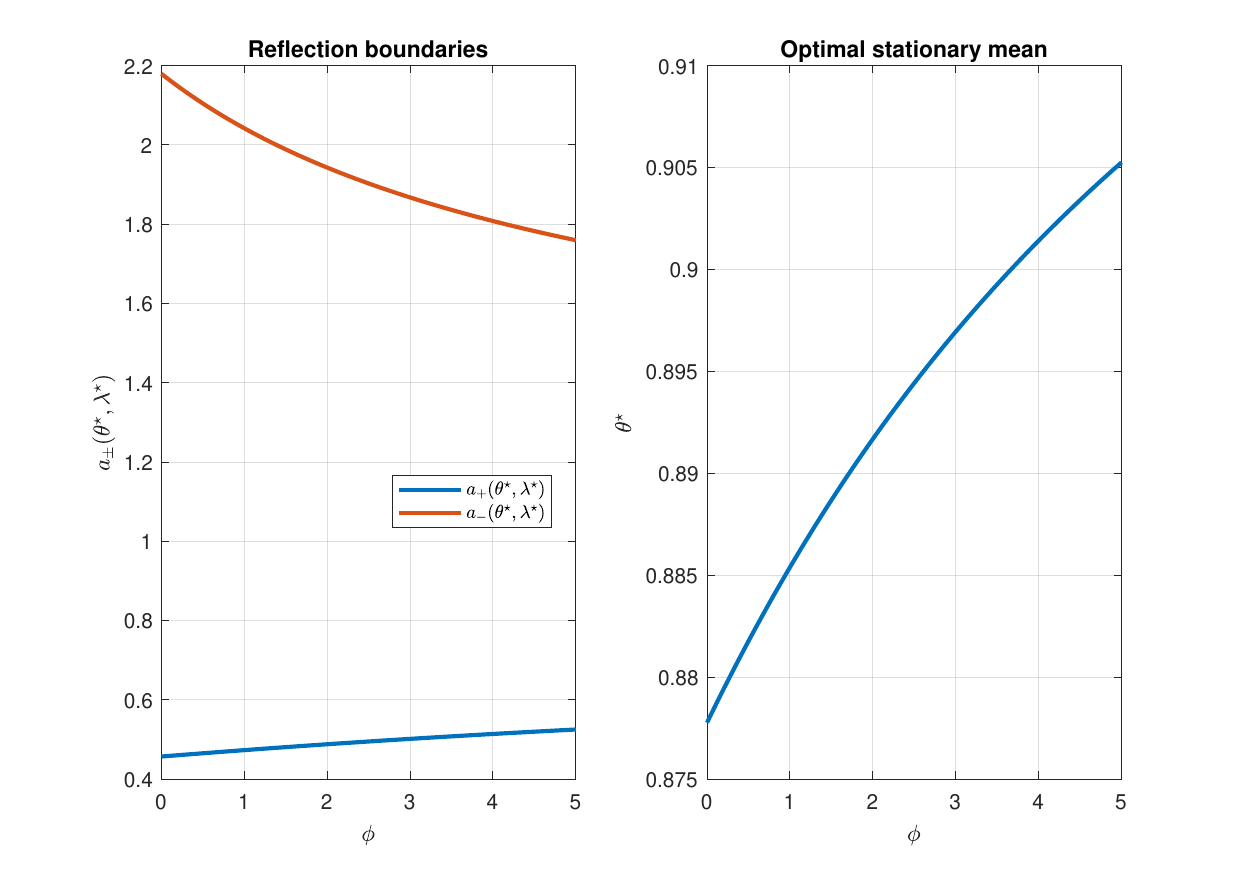}
    \caption{Sensitivity with respect to $\phi$. $\phi$ varies from $0.1$ to $\rho = 5$.
    On the left panel, we show the optimal reflection boundaries $a_\pm(\theta^\star,\lambda^\star)$, whereas on the right panel we show the parameter $\theta^\star$ at equilibrium. 
    Here, $\delta = \sigma=1$, $\rho=5$, $\psi = 0.5$, $\bar{x} = 1=\bar{\theta} = 1$, $K_+ = K_- = 1$.}
    \label{fig:phi}
\end{figure}

\section*{Acknowledgments}
\noindent
Giorgio Ferrari and Umberto Pappalettera are acknowledged by the author for stimulating discussions.

Funded by the Deutsche Forschungsgemeinschaft (DFG, German Research Foundation) - Project-ID 317210226 - SFB 1283.

\bibliographystyle{abbrv}
\bibliography{biblio}

\end{document}